\newfont{\roc}{eusm10 at 12pt}
\def\a{{\alpha}}
\def\e{{\varepsilon}}
\def\g{{\gamma}}
\def\b{{\beta}}
\def\s{{\sigma}}
\newcommand{\cL}{{\mathcal L}}
\newcommand{\Z}{{\mathbb Z}}
\newcommand{\R}{{\mathbb R}}
\newcommand{\C}{{\mathbb C}}
\newcommand{\N}{{\mathbb N}}
\newcommand{\bL}{{\mathbb L}}
\def\esf{\mathbb{S}}
\def\t{{\theta}}
\def\l{{\lambda}}
\def\ve{{\varepsilon}}
\newtheorem{theorem}{Theorem}[section]
\newtheorem{lemma}[theorem]{Lemma}
\newtheorem{proposition}[theorem]{Proposition}
\newtheorem{remark}[theorem]{Remark}
\newtheorem{assertion}[theorem]{Assertion}
\newtheorem{definition}[theorem]{Definition}
\renewenvironment{proof}{\smallskip\noindent{\bf Proof.}\hskip \labelsep}
                      {\hfill\penalty10000\raisebox{-.09em}{$\Box$}\par\medskip}
\newcommand{\cal}{\mathcal}
\begin{document}
\title{An end-to-end construction for singly periodic minimal
  surfaces}
 
\author{L. Hauswirth}
  \address{Laurent Hauswirth \newline Universit\'e Paris-Est, 
Laboratoire d'Analyse et Math\'ematiques Appliqu\'ees,
5 blvd Descartes, 77454 Champs-sur-Marne, FRANCE}
\email{laurent.hauswirth@univ-mlv.fr}
\author{ F. Morabito}
   \address{Filippo Morabito \newline Universit\'e Paris-Est, 
Laboratoire d'Analyse et Math\'ematiques Appliqu\'ees,
5 blvd Descartes, 77454 Champs-sur-Marne, FRANCE \newline and \newline
Università Roma Tre, 
Dipartimento di Matematica, Largo S.L. Murialdo 1, 00146 Roma, ITALY}
\email{morabito@mat.uniroma3.it}
\author{ M. M. Rodr\'\i guez  }
\address{M. Magdalena Rodr\'\i guez
  \newline  
Universidad Complutense de Madrid,
Departamento de Álgebra, P.za de las Ciencias 3,  
28040 Madrid, SPAIN }
\email{magdalena@mat.ucm.es}

\thanks{Third author research activity
 is partially supported by grants from R\'egion
 Ile-de-France and a MEC/FEDER grant no. MTM2007-61775.}
 \subjclass{53A10, 49Q05}
 
\begin{abstract}
We show the existence of various families of properly
embedded singly periodic minimal surfaces in $\R^3$ with
finite arbitrary genus and Scherk type ends in the quotient. 
The proof of our results is based on the gluing of small perturbations
of pieces of already known minimal surfaces.
\end{abstract}
 
\maketitle

\section{Introduction}
Besides the plane and the helicoid, the first singly periodic minimal
surface was discovered by Scherk~\cite{S} in 1835. This surface, known
as {\it Scherk's second surface}, is a properly embedded minimal
surface in $\R^3$ invariant by one translation $T$ we can assume along
the $x_{2}$-axis, and can be seen as the desingularization of two
perpendicular planes $P_{1}$ and $P_{2}$ containing the $x_2$-axis. We
assume $P_1,P_2$ are symmetric with respect to the planes $\{x_1=0\}$
and $\{x_3=0\}$.  By changing the angle between $P_1,P_2$ we obtain a
1-parameter family of properly embedded singly periodic minimal
surfaces, we will refer to as {\it Scherk surfaces}. In the quotient
$\R^3/T$ by its shortest translation $T$, each Scherk surface has
genus zero and four ends asymptotic to flat annuli contained in
$P_1/T, P_2/T$. Such ends are called Scherk-type ends.\\

In 1982, C.~Costa~\cite{C1,C2} discovered a genus one minimal surface
with three embedded ends: one top catenoidal end, one middle planar
end and one bottom catenoidal end. D.~Hoffmann and
W.H.~Meeks~\cite{HM0,HM1,HM2} proved the global embeddedness for this
Costa example, and generalized it for higher genus.  For each $k\geqslant
1$, Costa-Hoffman-Meeks surface $M_k$ (we will abbreviate by saying
{\it CHM example}) is a properly embedded minimal surface of genus $k$ and
three ends: two catenoidal ones and one middle planar end.\\

F.~Martin and V.~Ramos Batista~\cite{mara} have recently constructed a
properly embedded singly periodic minimal example which has genus one
and six Scherk-type ends in the quotient $\R^3/T$, called {\it Scherk-Costa
surface}, based on Costa surface (from now on, $T$ will denote a
translation in the $x_2$-direction). Roughly speaking, they remove
each end of Costa surface (asymptotic to a catenoid or a plane) and
replace it by two Scherk-type ends.  In this paper we obtain surfaces
in the same spirit as Martin and Ramos Batista's one, but with a
completely different method.  We construct properly embedded singly
periodic minimal surfaces with genus $k\geqslant 1$ and six Scherk-type
ends in the quotient $\R^3/T$, by gluing (in an analytic way) a
compact piece of $M_{k}$ to two halves of a Scherk surface at the top
and bottom catenoidal ends, and one flat horizontal annulus $P/T$ with
a disk removed at the middle planar end.

\begin{theorem}\label{th1}
  Let $T$ denote a translation in the $x_2$-direction.  For each $k
  \geqslant 1$, there exists a 1-parameter family of properly embedded
  singly periodic minimal surfaces in $\R^3$ invariant by $T$ whose
  quotient in $\R^3/T$ has genus $k$ and six Scherk-type ends.
\end{theorem}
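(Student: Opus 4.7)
The plan is to build an initial approximate minimal surface $\Sigma_\e$ by truncating and gluing, and then to perturb it to an exact minimal surface by a Cauchy data matching argument in the spirit of Kapouleas and Mazzeo--Pacard. The small parameter $\e>0$ will parametrise the 1-parameter family; morally it measures how far out along each asymptotic region we truncate, which corresponds to a rescaling of the $M_k$ factor relative to the fixed period $T$ of the Scherk piece (or, equivalently, to the choice of angle of the Scherk surface relative to the perpendicular position).

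To construct $\Sigma_\e$, I would first rescale $M_k$ so that the logarithmic growth of its two catenoidal ends matches the geometry of the necks between consecutive Scherk-type ends, and truncate it to obtain a compact piece with three boundary circles, one at each end. In parallel, I would truncate two halves of a Scherk surface in $\R^3/T$, each with one boundary circle (to be matched with a catenoidal end of $M_k$) and two Scherk-type ends going to infinity, and remove a small disk from the flat horizontal annulus $P/T$ to match the middle planar end. Gluing these four pieces along the four matching boundaries with a smooth interpolation in thin transition annuli produces $\Sigma_\e$, which is embedded and $T$-periodic, with mean curvature supported in the transition annuli and of size $O(\e^p)$ for some $p>0$.

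Next, write the equation for a minimal normal graph of height $u$ over $\Sigma_\e$ as $\cL u + Q(u) = -H(\Sigma_\e)$, where $\cL = \Delta + |A|^2$ is the Jacobi operator and $Q$ is quadratic in $u$ and its derivatives. Set up the problem separately on each of the three types of piece in suitable weighted H\"older spaces that prescribe the allowed asymptotic behaviour and the Cauchy data at the gluing circles. On each piece build a right inverse of $\cL$ modulo a finite-dimensional cokernel: on the CHM piece this cokernel is paired with the bounded Jacobi kernel (translations, rotations, dilation, CHM moduli), while on the Scherk halves and on the punctured annulus it corresponds to the freedom in Cauchy data at the gluing boundary. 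Then solve the nonlinear equation on each piece by a contraction mapping, producing normal graphs whose Cauchy data depend smoothly on a finite list of free geometric parameters (positions, neck sizes, moduli, and elements of the cokernels). Finally, match Cauchy data (values and normal derivatives) across each of the four gluing circles: the mismatch is $O(\e^p)$ and the derivative of the matching map with respect to the parameters is invertible to leading order, so the implicit function theorem produces, for each sufficiently small $\e$, an honest minimal surface that is $C^2$-close to $\Sigma_\e$. Embeddedness is then inherited from $\Sigma_\e$.

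The main obstacle I expect lies in the linear analysis on the CHM piece. The Jacobi operator on $M_k$ has a bounded kernel whose dimension grows with $k$, and one must identify precisely which directions are genuine obstructions to the gluing and which can be compensated by ambient motions or by deformations through the CHM moduli space. Choosing the weighted H\"older spaces so that $\cL$ has a controllable cokernel, and checking that this cokernel is exactly spanned by the variations induced by the free geometric parameters of the construction, is the central analytic point; the corresponding analysis on the Scherk halves and on the flat annulus minus a disk follows a more standard pattern, as these pieces have no bounded Jacobi kernel beyond that coming from rigid motions.
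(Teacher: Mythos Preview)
Your overall strategy is the paper's: solve the minimal surface equation on each summand separately with prescribed Dirichlet data, then match Cauchy data across the three (not four) gluing circles by a fixed point argument. So the architecture is correct.

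The genuine gap is in your linear analysis on the CHM piece. You write that the bounded Jacobi kernel of $M_k$ ``grows with $k$'' and treat identifying and compensating it as the main open issue. This is where the paper's argument differs from your outline in a way that matters. The construction is carried out entirely in the category of surfaces invariant under the reflection across $\{x_2=0\}$; all boundary data are taken even, and the weighted spaces on $M_k$ are restricted to symmetric functions. Under this symmetry the bounded Jacobi fields on $M_k$ are exactly $\Phi^{0,+}=n\cdot e_3$ and $\Phi^{1,+}=n\cdot e_1$, independently of $k$ (Nayatani, extended to all $k$ by Morabito). So the cokernel you need to compensate is two-dimensional, not $k$-dependent, and is matched precisely by the obvious geometric parameters already present in your list (vertical translation and a horizontal translation along $e_1$, together with the dilation and the height offsets of the Scherk pieces). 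Without imposing this symmetry the nondegeneracy statement you need is not available, and your proposed compensation via ``CHM moduli'' would not close the argument; in particular, for Theorem~\ref{th1} the paper uses the unbent surface $M_k(0)$ and no internal CHM deformation parameter at all.

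Two smaller corrections. First, there are three gluing circles (top and bottom catenoidal ends, middle planar end), not four. Second, the paper does not build a single approximate surface $\Sigma_\e$ and then perturb it as a global normal graph; rather, on each piece it produces a genuine minimal surface depending on boundary data (via a contraction mapping with a right inverse of the Jacobi operator in weighted spaces), and only afterward solves the finite-dimensional Cauchy matching system. This avoids having to analyse the Jacobi operator of the glued approximate surface directly.
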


V. Ramos Batista~\cite{B} constructed a singly periodic Costa minimal
surface with two catenoidal ends and two Scherk-type middle end, which
has genus one in the quotient $\R^3/T$.  This example is not embedded
outside a slab in $\R^3/T$ which contains the topology of the surface.
We observe that the surface we obtain by gluing a compact piece of
$M_1$ (Costa surface) at its middle planar end to a flat horizontal
annulus with a disk removed has the same properties as Ramos Batista's
one.\\

In 1988, H. Karcher~\cite{K1,K2} defined a family of properly embedded
doubly periodic minimal surfaces, called {\it toroidal halfplane
  layers}, which has genus one and four horizontal Scherk-type ends in
the quotient.  In 1989, W.H. Meeks and H. Rosenberg \cite{MR1} developed a
general theory for doubly periodic minimal surfaces having finite
topology in the quotient, and used an approach of minimax type to
obtain the existence of a family of properly embedded doubly periodic
minimal surfaces, also with genus one and four horizontal Scherk-type
ends in the quotient.  These Karcher's and Meeks and Rosenberg's
surfaces have been generalized in~\cite{Ro}, constructing a
3-parameter family ${\cal K}=\{M_{\sigma,\a,\b}\}_{\sigma,\a,\b}$ of
surfaces, called {\it KMR examples} (sometimes, they are also referred in
the literature as toroidal halfplane layers). Such examples have been
classified by J. P\'erez, M.M. Rodr\'{\i}guez and M. Traizet~\cite{PRT} as the
only properly embedded doubly periodic minimal surfaces with genus one
and finitely many parallel (Scherk-type) ends in the quotient.  Each
$M_{\sigma,\a,\b}$ is invariant by a horizontal translation $T$ (by
the period vector at its ends) and a non horizontal one $\widetilde
T$.  We denote by $\widetilde M_{\sigma,\a,\b}$ the lifting of
$M_{\sigma,\a,\b}$ to $\R^3/T$, which has genus zero, infinitely many
horizontal Scherk-type ends, and two limit ends.\\

In 1992, F.S.~Wei~\cite{wei2} added a handle to a KMR example
$M_{\sigma,0,0}$ in a periodic way, obtaining a properly embedded
doubly periodic minimal surface invariant under reflection in three
orthogonal planes, which has genus two and four horizontal Scherk-type
ends in the quotient.  Some years later, W.~Rossman, E.C.~Thayer and
M.~Wolgemuth~\cite{rossthwo} added a different type of handle to a KMR
example $M_{\sigma,0,0}$, also in a periodic way, obtaining a
different minimal surfaces with the same properties as Wei's one. They
also added two handles to a KMR example, getting doubly periodic
examples of genus three in the quotient.  L.~Mazet and M.
Traizet~\cite{maztr} have recently added $N\geqslant 1$ handles to a
KMR example $M_{\s,0,0}$, obtaining a genus $N$ properly embedded
minimal surface in $\R^3/T$ with an infinite number of horizontal
Scherk-type ends and two limit ends. The idea of the construction
is to connect $N$  periods of the doubly periodic example of Wei with
two halves KMR example. However they only control the 
asymptotic behavior in their construction. 
 They have also constructed a
properly embedded minimal surface in $\R^3/T$ with infinite genus,
adding handles in a quasi-periodic way to a KMR example.\\

L. Hauswirth and F. Pacard~\cite{HP} have constructed higher genus
Riemann minimal examples in $\R^3$, by gluing two halves of a Riemann
minimal example with the intersection of a conveniently chosen CHM
surface with a slab. We follow their ideas to generalize Mazet and
Traizet's examples by constructing higher genus KMR examples: we
construct properly embedded singly periodic minimal examples whose
quotient in $\R^3/T$ has arbitrary finite genus, infinitely many
horizontal Scherk-type ends and two limit ends.  More precisely, we
glue a compact piece of a slightly deformed CHM example $M_{k}$ with
tilted catenoidal ends, to two halves of a KMR example $M_{\s,\a,0}$
or $M_{\s,0,\b}$ (see Figure~\ref{KMR+CHM}) and a periodic horizontal
flat annuli with a disk removed.

\begin{theorem}\label{th2}
  Let $T$ denote a translation in the $x_2$-direction.  For each $k
  \geqslant 1$, there exist two 1-parameter families ${\cal K}_1,{\cal
    K}_2$ of properly embedded singly periodic minimal surfaces in
  $\R^3$ whose quotient in $\R^3/T$ has genus $k$, infinitely many
  horizontal Scherk-type ends and two limit ends.  The surfaces in
  ${\cal K}_1$ have a vertical plane of symmetry orthogonal to the
  $x_1$-axis, and the surfaces in ${\cal K}_2$ have a vertical plane
  of symmetry orthogonal to the $x_2$-axis.
\end{theorem}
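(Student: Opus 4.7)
\medskip
\noindent\textbf{Strategy.} The plan is to follow the end-to-end gluing scheme already used for higher genus Riemann examples by Hauswirth and Pacard~\cite{HP}, adapted to the doubly periodic KMR setting. We build the surfaces from three types of piece: (i) a large compact piece of a slightly deformed Costa-Hoffman-Meeks surface $M_{k}$, with its two catenoidal ends truncated and possibly slightly tilted; (ii) one half of a KMR example $M_{\sigma,\alpha,0}$ for the family ${\cal K}_{1}$, or $M_{\sigma,0,\beta}$ for ${\cal K}_{2}$, cut along a horizontal curve so that the piece retains all the Scherk-type ends converging to one of the limit ends; (iii) a flat horizontal annulus $P/T$ with a small disk removed, attached to the middle planar end of $M_{k}$. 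The goal is to choose the free parameters of each piece and to match the three pieces along the gluing curves to produce a smooth minimal surface.

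\medskip
\noindent\textbf{Step 1: minimal deformations with prescribed boundary.} For each piece we produce a family of minimal surfaces obtained as normal graphs of a function $w$ with $w|_{\partial}$ prescribed, together with a finite set of geometric parameters (vertical translations, tilt angles of the catenoidal ends, sizes of the necks, and variations of $\sigma,\alpha,\beta$). The linearization is the Jacobi operator, which we invert on suitable weighted H\"older spaces, with weights giving decay at the KMR limit end and standard catenoidal decay near the truncation circles. The Jacobi operator is Fredholm once we quotient out a finite-dimensional space of bounded Jacobi fields generated by rigid motions and by the infinitesimal deformations of the KMR moduli studied in~\cite{PRT}; a right inverse bounded independently of the truncation size then yields the required family of minimal normal graphs.

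\medskip
\noindent\textbf{Step 2: matching Cauchy data, and the main obstacle.} With the three families at hand, the surfaces are glued by imposing that both the boundary values of $w$ and their normal derivatives agree up to first order along each matching curve. After a Fourier decomposition of the $C^{2,\alpha}$ Cauchy data along the gluing circles, the low modes are adjusted with the geometric parameters and the high modes are absorbed by the right inverses of Step~1, the whole nonlinear matching being solved by a Schauder-type fixed-point argument; the reflection symmetry in the vertical plane imposed in the statement kills precisely those obstruction modes that could not otherwise be controlled, leaving exactly one free modulus which labels the resulting family. The principal difficulty is the analysis on the half KMR piece, whose infinitely many Scherk-type ends accumulate at a limit end; standard catenoid-type Fredholm theory does not apply there, and one has to establish sharp mapping properties of the Jacobi operator on a weighted space adapted to the limit end, modulo the explicit finite-dimensional cokernel generated by variations of the KMR parameters. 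Once this Fredholm theory is in place, the fixed-point argument produces, for each $k\geqslant 1$, the two announced $1$-parameter families ${\cal K}_{1}$ and ${\cal K}_{2}$.
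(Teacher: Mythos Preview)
Your overall architecture (end-to-end gluing \`a la Hauswirth--Pacard, Jacobi operator analysis on weighted spaces, fixed-point matching of Cauchy data with symmetry killing the obstruction modes) is the same as the paper's. However, the piece inventory is wrong in a way that breaks the construction.

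The truncated CHM surface $M_k^T$ has \emph{three} boundary circles: two from the top and bottom catenoidal ends and one from the middle planar end. Your list (i)--(iii) provides only one half-KMR piece and one flat annulus, so one of the two catenoidal boundaries is left open. In the paper's construction one glues \emph{two} half-KMR pieces, one at each catenoidal end (see the introduction and Section~\ref{proof.theorem1}), giving four summands in total:
\[
S_{t,\lambda_t,\xi_t,d_t}(\varphi_t)\cup S_{b,\lambda_b,\xi_b,d_b}(\varphi_b)\cup S_m(\varphi_m)\cup M_k^T(\ve/2,\Psi).
\]
This is not just bookkeeping: the two limit ends in the statement of Theorem~\ref{th2} come precisely from the two half-KMR pieces. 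With a single half-KMR you obtain at most one limit end (this is the configuration of Theorem~\ref{th3}, which in addition requires a Scherk piece to cap the remaining catenoidal boundary). Your matching system in Step~2 would therefore be underdetermined on one side and could not close up to an embedded surface with the claimed end structure.

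A minor point: your assignment of KMR subfamilies to ${\cal K}_1,{\cal K}_2$ is reversed. The surface $M_{\sigma,\alpha,0}$ has reflection symmetry across a plane orthogonal to the $x_2$-axis and is used for ${\cal K}_2$; the surface $M_{\sigma,0,\beta}$ has reflection symmetry across a plane orthogonal to the $x_1$-axis and is used for ${\cal K}_1$. The symmetry is what dictates whether the boundary data and Jacobi fields are taken even or odd in the angular variable, so the swap is not harmless for the analysis.
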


\begin{figure}\begin{center}
\epsfysize=4cm \epsffile{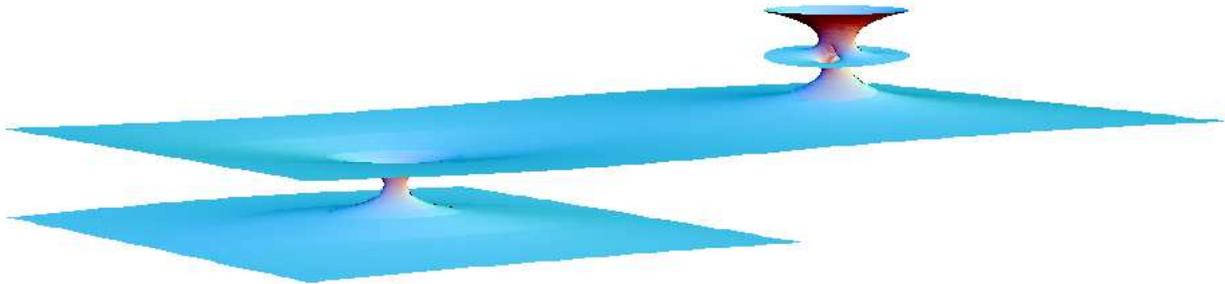}
\end{center}\caption{A sketch of half a KMR example $M_{\s,0,0}$ glued
  to a compact piece of Costa surface.}
\label{KMR+CHM}
\end{figure}

L.  Mazet, M. Traizet and M. Rodr\'\i guez \cite{mazrodtr} have recently
constructed saddle towers with infinitely many ends: they are
non-periodic properly embedded minimal surfaces in $\R^3/T$ with
infinitely many ends and one limit end. In the present paper, we
construct (non-periodic) properly embedded minimal surfaces in
$\R^3/T$ with arbitrary finite genus $k\geqslant 0$, infinitely many ends
and one limit end. With this aim, we glue half a Scherk example with
half a KMR example, in the case $k=0$; and, when $k\geqslant 1$, we glue a
compact piece of the CHM example $M_k$ to half a Scherk surface (at
the top catenoidal end of $M_k$), a periodic horizontal flat annuli
with a disk removed (at the middle planar end) and half a KMR example
(at the bottom catenoidal end), see Figure~\ref{sketch}.

\begin{theorem}\label{th3}
  Let $T$ denote a translation in the $x_2$-direction.  For each $k
  \geqslant 0$, there exists a 1-parameter family of properly embedded
  singly periodic minimal surfaces in $\R^3$ whose quotient in
  $\R^3/T$ has genus $k$, infinitely many horizontal Scherk-type ends
  and one limit end.
\end{theorem}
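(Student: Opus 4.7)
The plan is to follow the gluing scheme already used for Theorems~\ref{th1} and~\ref{th2}, this time combining in a single construction the two types of building blocks characteristic of each of them: on one side of the compact piece of the CHM example we attach half a Scherk surface (as in Theorem~\ref{th1}), while on the other side we attach half a KMR example (as in Theorem~\ref{th2}). For $k=0$ there is no CHM piece: we glue directly half a Scherk surface to half a KMR example along a single matching curve, obtaining the single Scherk-type end from the Scherk half together with the accumulating family of Scherk-type ends and the limit end from the KMR half, but genus zero. For $k\geqslant 1$ we start from a compact piece $K_k\subset M_k$ bounded by three curves close to the three ends, and we prepare four items to be matched: a suitable small deformation of $K_k$ with slightly tilted top and bottom catenoidal ends, half a Scherk surface attached at the top catenoidal end, a periodic horizontal flat annulus $P/T$ minus a disk attached at the middle planar end, and half a KMR example $M_{\s,\a,0}$ or $M_{\s,0,\b}$ attached at the bottom catenoidal end.

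Each piece is written as a normal graph of a small function $u$ over a reference minimal surface, so that the minimality condition reads $L u + Q(u) = 0$, where $L$ is the Jacobi operator of the reference and $Q$ collects the higher-order nonlinearities. Working in weighted H\"older spaces adapted to the asymptotic geometry of each piece --- exponential decay at Scherk-type ends, controlled logarithmic growth at catenoidal ends, polynomial weights at the planar end, and spaces that are uniform up to the accumulation of Scherk-type ends near the limit end of the KMR half --- I would invoke the linear analysis already developed for Theorems~\ref{th1} and~\ref{th2}. It yields, on each piece separately, a bounded right inverse of $L$ modulo the finite-dimensional space of bounded Jacobi fields associated to translations, rotations, and changes of flux or of logarithmic growth, and prescribed by the Dirichlet data on the matching curves. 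A standard contraction mapping argument then produces, on each piece and for every sufficiently small choice of Dirichlet data and asymptotic parameters, a minimal graph depending smoothly on these parameters.

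The four (or two, when $k=0$) pieces are then assembled by matching their Cauchy data along the finitely many common matching curves. The mismatch defines a nonlinear map from the finite-dimensional parameter space --- heights, horizontal positions and tilt angles of the catenoidal necks of $K_k$, the asymptotic parameters $(\s,\a)$ or $(\s,\b)$ of the KMR half, the opening angle of the Scherk half, and the radius and height of the flat annulus --- into the space of Jacobi fields on the matching curves. Using the explicit list of bounded Jacobi fields and a balancing of periods and fluxes exactly as in the proofs of the two previous theorems, I would show that the linearization of this matching map is surjective and conclude by the implicit function theorem that a 1-parameter family of minimal solutions exists; the surviving parameter can be taken to be the size of the neck of $K_k$ (and, when $k=0$, the parameter $\s$ of the KMR half).

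The main obstacle I expect is the one already met in the proof of Theorem~\ref{th2}: the half KMR example has a limit end, so the linear theory on this piece must be carried out in function spaces that are uniform along the accumulating Scherk-type ends, and the right inverse built there must be combined with the right inverses on the other three pieces without losing this uniform control. In particular, the conditions that close the period at the Scherk end and that place the asymptotic flat annuli of the KMR half at the correct height have to be imposed while preserving the prescribed behaviour of the two limit planes; both conditions are scalar and the parameter space already contains the corresponding degrees of freedom, so they can be solved simultaneously, producing the claimed 1-parameter family.
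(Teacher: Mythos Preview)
Your overall gluing scheme is correct and matches the paper's strategy: half a Scherk surface on one side, half a KMR example on the other, with a compact CHM piece and a flat annulus in between when $k\geqslant 1$. The matching is indeed done by a contraction/fixed-point argument on boundary data followed by a finite-dimensional fixed-point on the remaining parameters, exactly as you describe.

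There is, however, one point where you overcomplicate things compared to the paper. You propose to use a CHM piece with \emph{slightly tilted} catenoidal ends and a general KMR half $M_{\s,\a,0}$ or $M_{\s,0,\b}$. The paper instead uses the \emph{unbent} CHM example $M_k=M_k(0)$ (i.e.\ $\xi=0$) together with the most symmetric KMR half $\widetilde M_{\s,0,0}$ (with the further normalization $\lambda=\xi=0$ on the KMR side). Because the flux of $\widetilde M_{\s,0,0}$ is vertical, no bending of the CHM catenoidal end is needed to match it, and the expansions of the top and bottom ends contain no $\cos\theta$ or $\sin\theta$ term coming from a tilt. This collapses several of the parameters you list (tilt angles, horizontal positions $\xi_t,\xi_b$, the choice $\a$ or $\b$) and the final matching system reduces to one of the same shape as in Theorem~\ref{th1}: only the vertical shifts $\eta_t,\eta_b$ and the boundary functions survive. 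The Scherk half is the piece from Section~\ref{strip} with nontrivial $(\t_1,\t_2)$, which supplies the logarithmic growth matched to the catenoidal end; the flat annulus is the $(\t_1,\t_2)=(0,0)$ case of the same construction.

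A second point worth making explicit: the paper works throughout with functions that are \emph{even} in the angular variable, i.e.\ preserves the reflection symmetry across $\{x_2=0\}$. This is not a cosmetic choice: it is what kills the extra bounded Jacobi fields on both the CHM piece and the KMR piece and makes the right inverses of Propositions~\ref{inverse.operator.costa} and~\ref{inverse} available. Your proposal invokes ``the explicit list of bounded Jacobi fields'' but does not impose the symmetry; without it the kernel is larger and the linear step as stated would not go through. If you add this symmetry hypothesis and drop the tilting, your argument becomes essentially the paper's proof.
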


\begin{figure}\begin{center}
\epsfysize=7cm \epsffile{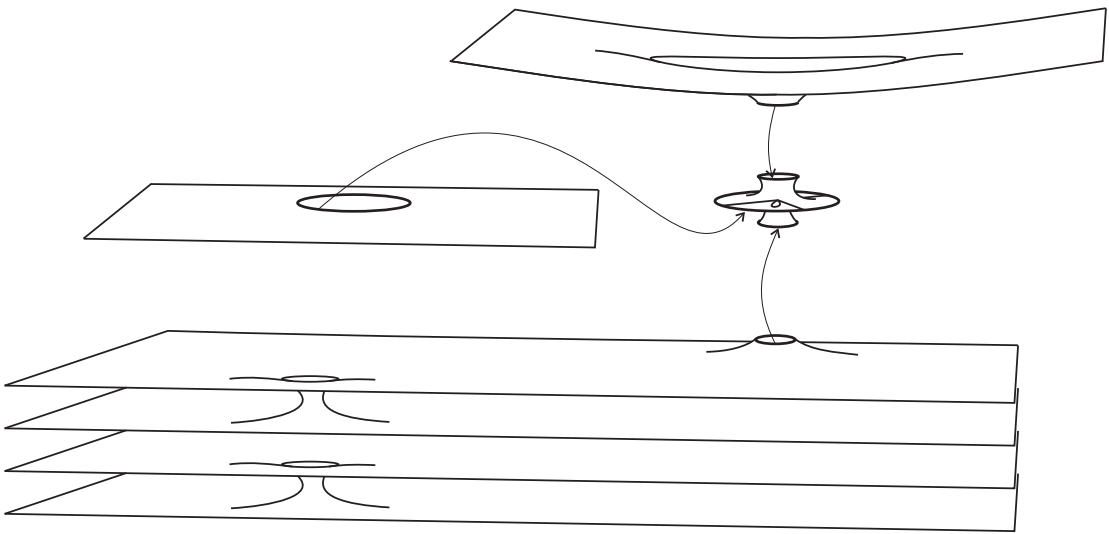}
\end{center}\caption{A sketch of a surface in the family of
  theorem~\ref{th3}.}\label{sketch}
\end{figure}

The family of KMR examples is a three parameter family which contains
two subfamilies whose surfaces have a vertical plane of symmetry. In
the construction of examples satisfying theorems~\ref{th2}
and~\ref{th3}, we need to have at least one vertical plane of symmetry
in order to control the kernel of the Jacobi operator on each glued
piece.  F.  Morabito \cite{M1} has recently proved there is a bounded
Jacobi field which does not come from isometries of $\R^3$ on the CHM
bent surface. For this reason, we are not able to produce a
3-parameter family of KMR examples with higher genus in
theorem~\ref{th2}.\\

The paper is organized as follows:
In section~\ref{costa} we briefly describe the CHM examples $M_k$
and obtain, for each genus $k$, a 1-parameter family of surfaces
$M_k(\xi)$ by bending the catenoidal ends of $M_k=M_k(0)$ keeping a
vertical plane of symmetry.  This is used to prescribe the flux of the
deformed CHM surface, which has to be the same as the corresponding
KMR example we want to glue (theorem~\ref{th2}).  To simplify the
construction of examples satisfying theorems~\ref{th1} and~\ref{th3},
we consider a ``no bent'' CHM example $M_k$.  In
section~\ref{family.costa.xi} we perturb $M_k(\xi)$ using the implicit
function theorem. We get an infinite dimensional family of minimal
surfaces that have three boundaries.\\

In section~\ref{strip}, we apply an implicit function theorem to solve
the Dirichlet problem for the minimal graph equation on a horizontal
flat periodic annuli with a disk $B$ removed, prescribing the boundary
data on $\partial B$ and the asymptotic direction of the Scherk-type
ends. We construct the flat annuli with a disk removed we will glue to
the CHM example at its middle planar end.  Varying the asymptotic
direction of the ends and the flux of the surface, we obtain the
pieces of Scherk example we will glue at the top and bottom catenoidal
ends of the CHM example (proving theorem~\ref{th1}) and to half a KMR
example (theorem~\ref{th3}).\\

In section~\ref{sec:KMR}, we study the KMR examples $M_{\s,\a,\b}$ and
describe a conformal parametrization of these examples on a cylinder.
We also obtain an expansion of pieces of the KMR examples as the flux
of $M_{\s,\a,\b}$ becomes horizontal (i.e. near the catenoidal limit).
Section~\ref{Jacobi.THL} is devoted to the study of the mapping
properties of the Jacobi operator about such $M_{\s,\a,\b}$ near the
catenoidal limit. And we apply in section~\ref{family.M.abt} the
implicit function theorem to perturb half a KMR example $M_{\s,\a,0}$
(resp. $M_{\s,0,\b}$), obtaining a family of minimal surfaces
asymptotic to half a $M_{\s,\a,0}$ (resp. $M_{\s,0,\b}$) and whose
boundary is a Jordan curve.  We prescribe the boundary data of such a
surface. We remark that sections \ref{sec:KMR}, \ref{Jacobi.THL}, \ref{family.M.abt} are of independent interest. They are devoted to the  
global analysis on KMR examples. \\

Finally, we do the end-to-end construction in section~\ref{gluing}: we
explain how the boundary data of the corresponding minimal surfaces
constructed in sections~\ref{strip},~\ref{family.costa.xi}
and~\ref{family.M.abt} can be chosen so that the union of these forms
smooth minimal surfaces satisfying theorems~\ref{th1},~\ref{th2}
and~\ref{th3}.

\section{A Costa-Hoffman-Meeks type surface  with bent\\ catenoidal ends}
\label{costa}
In this section we recall the result shown in \cite{HP} about the
existence of a family of minimal surfaces $M_k(\xi)$ close to the
Costa-Hoffman-Meeks surface $M_k(0)=M_k$ of genus $k \geqslant 1$, with one planar
end and two slightly bent catenoidal ends by an angle $\xi$.

\subsection{Costa-Hoffman-Meeks surfaces}
We briefly present here the family of CHM surfaces $M_k$ studied
in~\cite{C1,C2,HM0,HM1,HM2}.  For each natural $k\geqslant 1$, $M_k$
is a properly embedded minimal surface of genus $k$ and three ends.
After suitable rotations and translations, we can assume its ends
are horizontal (in particular, they can be ordered by heights) and it
enjoys the following properties:
\begin{enumerate}
\item $M_k$ has one middle planar end $E_m$ asymptotic to the plane
  $\{x_3=0\}$, and two catenoidal ends: one top $E_t$ and one bottom
  $E_b$, respectively asymptotic to the upper and lower end of a
  catenoid having as axis of revolution the $x_3$-axis.

\item $M_k$ intersects the $\{x_3 =0\}$ plane in $k+1$ straight lines,
  which intersect at equal angles $\frac{\pi}{k+1}$ at the origin.
  The intersection of $M_k$ with any one of the remaining horizontal
  planes is a single Jordan curve. Thus the intersection of $M_k$ with
  the upper half-space $\{x_3 > 0\}$ (resp. the lower half-space
  $\{x_3 < 0\}$) is topologically an open annulus.

\item The symmetry group of $M_k$ is generated by $\pi$-rotations
  about the $k+1$ lines contained in the surface at height zero,
  together with the reflection symmetries in vertical planes that
  bisect those lines. Assume one of such planes of symmetry is the
  $\{x_2 =0\}$ plane. In particular, $M_k$ is invariant by the
  rotation of angle $\frac{2\pi}{k+1}$ about the $x_3$-axis and by the
  composition of a rotation by angle $\frac{\pi}{k+1}$ about the
  $x_3$-axis with the reflection symmetry across the $\{x_3 =0\}$
  plane.
  
\end{enumerate}
Now we give  a local description of the surfaces $M_{k}$ near its ends
and we introduce coordinates that we will use.\\

\noindent {\bf The planar end.} The planar end $E_m$ of $M_k$ can be
parametrized~\cite{HP} by
\[
X_m (x) = \left( \frac{x}{|x|^2}, u_m (x) \right) \in {\mathbb R}^3,
\quad x\in\bar B^*_{\rho_0}(0)
\]
where $\bar B^*_{\rho_0}(0)$ is the punctured closed disk in $\R^2$ of
radius $\rho_0>0$ small centered at the origin, and $u_m= {\cal
  O}_{C^{2,\a}_b}(|x|^{k+1})$ is a solution of
\begin{equation}
  |x|^4\, \mbox{div}\, \left(\frac{\nabla u}{(1+|x|^4\, |\nabla u|^2)^{1/2}}\right)=0.
\label{eq.min.piano.base2}
\end{equation}
Moreover, $u_m$ can be extended continuously to the puncture, using
Weierstrass representation (in fact, it can be extended as a
$C^{2,\a}$ function).  Here ${\cal O}_{C^{n,\a}_b}(g)$ denotes a
function that, together with its partial derivatives of order less
than or equal to $n+\a$, is bounded by a constant times $g$.

If we linearize in $u=0$ the nonlinear equation
\eqref{eq.min.piano.base2}, we obtain the expression of an operator
which is the Jacobi operator about the plane; i.e. ${\cal L}_{\R^2}=
|x|^4 \Delta_0$. To be more precise, the linearization of
(\ref{eq.min.piano.base2}) gives
\begin{equation}
\label{linearizzato.planar}
L_u\, v= |x|^4 {\rm div}\,
 \left(  \frac{\nabla v}{\sqrt{1+|x|^4|\nabla u|^2}}-|x|^4 \nabla u 
\frac{\nabla u \cdot \nabla  v}{\sqrt{(1+|x|^4|\nabla u|^2)^3}}\right).
\end{equation}
Equation~(\ref{eq.min.piano.base2}) means that the surface $\Sigma_u$
parametrized by $x\mapsto\left(\frac{x}{|x|^2},u(x)\right)$ is
minimal.  We will give the expression of the mean curvature $H_{u+v}$
of $\Sigma_{u+v}$ in terms of the mean curvature $H_u$ of $\Sigma_u$.

\begin{lemma}\label{lem:Hu+v.planar.end}
  There exists a function $Q_u$ satisfying $Q_u(0,0)= \nabla
  Q'_u(0,0)=0$ such that
\begin{equation}
\label{Hu+v.planar.end}
2H_{u+v}=2H_u+L_uv+ |x|^4 Q_u(\sqrt{|x|^4}\nabla v, \sqrt{|x|^4}
\nabla^2 v) .
\end{equation}
\end{lemma}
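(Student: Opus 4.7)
The plan is to prove this as a Taylor expansion in $v$ of the mean curvature functional attached to the parametrization $X_w(x)=(x/|x|^2,w(x))$. Let $\mathcal{M}(w):=2H_w$ viewed as a smooth nonlinear second-order operator on height functions $w$; equation~(\ref{eq.min.piano.base2}) identifies $\mathcal{M}(w)$, up to a positive conformal factor depending only on $x$ (coming from the area element of the inverted parametrization), with $|x|^4\,\mathrm{div}\bigl(\nabla w/\sqrt{1+|x|^4|\nabla w|^2}\bigr)$. Writing $\mathcal{M}(w)=|x|^4\,\Phi(x,\nabla w,\nabla^2 w)$ with $\Phi$ smooth, Taylor's theorem with integral remainder yields
\begin{equation*}
\mathcal{M}(u+v)=\mathcal{M}(u)+D\mathcal{M}(u)[v]+\int_0^1(1-t)\,D^2\mathcal{M}(u+tv)[v,v]\,dt.
\end{equation*}
By definition of the linearization, $D\mathcal{M}(u)[v]=L_uv$ is exactly the expression recorded in~(\ref{linearizzato.planar}). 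This already produces the first two summands on the right-hand side of~(\ref{Hu+v.planar.end}).

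It then remains to check that the quadratic remainder has the claimed structural form $|x|^4\,Q_u(|x|^2\nabla v,|x|^2\nabla^2v)$ with $Q_u$ vanishing to second order at the origin. The key observation is that the nonlinearity in $\Phi$ enters $\nabla w$ and $\nabla^2 w$ only through the combinations $|x|^4|\nabla w|^2=\bigl||x|^2\nabla w\bigr|^2$, $|x|^4\,\nabla w\cdot\nabla^2 w\cdot\nabla w$ (arising from $\nabla(|x|^4|\nabla w|^2)$ inside the divergence), and $|x|^4\Delta w$; the only explicit $x$-dependence comes from $\partial_j|x|^4=4|x|^2 x_j$, which delivers clean factors of $|x|^2$ multiplied by components of $x$. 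Regrouping, one sees that $\Phi(x,\nabla w,\nabla^2 w)$ can be written as a smooth function of $x$ and of the rescaled variables $p:=|x|^2\nabla w$, $q:=|x|^2\nabla^2 w$. Consequently $D^2\mathcal{M}(u+tv)[v,v]$ takes the form $|x|^4\,\widetilde Q(x,t;|x|^2\nabla v,|x|^2\nabla^2 v)$ with $\widetilde Q$ smooth in all its arguments and at least quadratic in the last two; integrating in $t$ produces a function $Q_u$ satisfying $Q_u(0,0)=0$ and $\nabla Q_u(0,0)=0$.

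The only real obstacle is this algebraic repackaging: one must keep track of every power of $|x|$ generated by differentiating $|x|^4$ inside the divergence and by expanding $(1+|x|^4|\nabla w|^2)^{-1/2}$, and verify that each such power is absorbed by the rescaled variables $|x|^2\nabla v$, $|x|^2\nabla^2 v$. Conceptually the reason this succeeds is that the inverted parametrization $x\mapsto x/|x|^2$ induces a conformal factor $|x|^{-4}$, so $|x|^2\nabla v$ is (up to bounded factors) the intrinsic gradient of $v$ on the end; the same rescaling governs the weighted Hölder spaces used in the subsequent implicit function theorem argument, which explains why the remainder must be expressed in exactly these variables.
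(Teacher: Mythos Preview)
Your proof is correct and follows essentially the same approach as the paper's: both Taylor-expand the mean curvature operator $2H_{u+tv}$ in $t$ (the paper phrases this as expanding $f(t)=(1+|x|^4|\nabla(u+tv)|^2)^{-1/2}$), identifying $L_uv$ as the linear part and collecting the remainder as a quadratic term in the rescaled variables $|x|^2\nabla v$, $|x|^2\nabla^2 v$. Your write-up is considerably more explicit than the paper's one-line proof about why the remainder factors as $|x|^4 Q_u(|x|^2\nabla v,|x|^2\nabla^2 v)$, but the underlying idea is identical.
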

\begin{proof}
Define $f(t)=\frac{1}{\sqrt{1+|x|^4|\nabla(u+tv)|^2}}$ and 
apply Taylor expansion.
\end{proof}

Since $u$ satisfies (\ref{eq.min.piano.base2}), then $H_u=0$. 
The minimal surface equation that
we will use in the following sections is:
\begin{equation}
\label{operator.middle.end}
 {|x|^4} \left({\Delta_0}_{} v+ 
 \sqrt{1+|x|^4|\nabla u|^2} \left(\bar L_u v +  
Q_u({|x|^2}\nabla v, {|x|^2} \nabla^2 v)\right)\right)=0,
\end{equation}
where $\bar L_u $ is a second order linear operator whose coefficients
are in ${\mathcal O}_{C^{2,\a}}\left(|x|^{k+1} \right).$\\

\noindent
{\bf The catenoidal ends}. We denote by $X_c$ the parametrization 
of the standard catenoid $C$ 
whose axis of revolution is the $x_3$-axis. Its expression is 
$$ X_c (s, \theta) : = ( \cosh s \, \cos \theta, \cosh s \, \sin \theta, s)
 \in {\mathbb R}^3 $$
where $(s,\t) \in \R \times S^1$. 
The unit normal vector field about $C$ is
$$ n_c (s, \theta) : = \frac{1}{\cosh s} \, ( \cos \theta, 
 \sin \theta,  - \sinh s),\quad (s,\t) \in \R \times S^1. $$
Up to a dilation, we can assume that the two ends $E_t$ and $E_b$ of  $M_k$ are
 asymptotic to some translated copy of the catenoid parametrized by $X_c$ in the 
vertical direction. Therefore, $E_t$ and $E_b$ can be parametrized, 
respectively, by
$$ X_t : =  X_c  + w_t   \, n_c  + \sigma_t \, e_3\quad\mbox{in }
(s_0, \infty) \times S^1, $$
$$X_b : =  X_c  - w_b   \, n_c  - \sigma_b \, e_3\quad \mbox{in } (-
\infty, - s_0) \times S^1, $$ where $\sigma_t,\sigma_b \in {\mathbb
  R},$ and $w_t$ (resp. $w_{b}$) is a function defined in $(s_0,
\infty) \times S^1$ (resp. $(-\infty,-s_0) \times S^1$) which
tends exponentially fast to $0$ as $s$ goes to $+\infty$ (resp.
$-\infty$), reflecting the fact that the ends are asymptotic to a
catenoidal end.\\

We recall that the surface parametrized by $X : =  X_{c}  + w \, n_c$
is minimal if and only if the function $w$ satisfies the minimal
surface equation which, for normal graphs over a catenoid has the following form 
\begin{equation}
\label{eq.catenoide}
{\bL}_C w+
\frac{1}{  \cosh^2 s}\,\left(
 Q_{2}\left(\frac{w}{\cosh s}\right) + 
\cosh s \,Q_{3}\left(\frac{w}{\cosh s} \right) \right) =0,
\end{equation}
where ${\bL}_C$ is the Jacobi operator about the catenoid, i.e.
$${\bL}_C w=\frac{1}{\cosh^2 s}\left(
  \partial_{ss}^2 w +\partial_{\t\t}^2 w + \frac{2w}{\cosh^2s}
\right),$$ and $Q_{2}$, $Q_{3}$ are linear second order differential
operators which are bounded in ${\mathcal C}^k (\R \times S^1)$,
for every $k$, and  satisfy $Q_2(0)=Q_3(0)= 0$, $
\nabla Q_{2} (0)= \nabla Q_{3} (0) =0$,$\nabla^2 Q_{3} (0)=0$ and
then:
\begin{equation}
\| Q_{j} (v_2) - Q_{j} (v_1) \|_{{\mathcal C}^{0, \alpha} ([s,s+1]
\times S^1)} \leqslant c \, \left( \sup_{i=1,2} \| v_i \|_{{\mathcal
C}^{2, \alpha} ([s,s+1] \times S^1)} \right)^{j-1}  \,  \| v_2 -v_1
\|_{{\mathcal C}^{2, \alpha} ([s,s+1] \times S^1)}
\label{stima.Q.catenoide}
\end{equation}
for all $s \in {\mathbb R}$ and all $v_1, v_2$ such that $\| v_i
\|_{{\mathcal C}^{2, \alpha} ([s,s+1] \times S^1)} \leqslant 1$. The
 constant $c>0$ does not depend on $s$. \\

\subsection{ The family of Costa-Hoffman-Meeks surfaces with bent catenoidal ends.}
Denote by $R_\xi$ the rotation of angle $\xi$ about the $x_2$-axis
oriented by $e_2$.  Using an elaborate version of the implicit
function theorem and following \cite{J2} and \cite{KMP} it is possible
to prove the following

\begin{theorem}[\cite{HP}]
\label{existence.Mkx}
There exists $\xi _0 >0$ and a smooth 1-parameter family of minimal
surfaces $\{M_k (\xi)\ |\ \xi\in (- \xi_0, \xi_0)\}$ such that
$M_k(0)=M_k$ and each $M_k(\xi)$ is invariant by the reflection
symmetry across the $\{x_2=0\}$ plane, has one horizontal planar end
$E_m$ and has two catenoidal ends $E_t(\xi),E_b(\xi)$ asymptotic
respectively, up to a translation, to the upper and lower end of the
catenoid $R_\xi C$ (i.e. the standard catenoid whose axis of
revolution is directed by $R_\xi e_3$).  Moreover, $E_t(\xi),
E_b(\xi)$ can be parametrized respectively by
\begin{equation}
\label{parameterization.topend}
X_{t,\xi} = R_\xi  \, ( X_c + w_{t, \xi} \, n_c) + \sigma_{t ,\xi}
\, e_3  + \varsigma_{t,\xi} \, e_1 
\end{equation}
\begin{equation}
\label{parameterization.bottomend}
X_{b,\xi} = R_\xi  \, ( X_c - w_{b, \xi} \, n_c) - \sigma_{b ,\xi}
\, e_3 - \varsigma_{b,\xi}\, e_1
\end{equation}
where the functions $w_{t, \xi},w_{b, \xi}$ and the numbers $\sigma_{t
  ,\xi}, \varsigma_{t ,\xi} ,\sigma_{b ,\xi}, \varsigma_{b, \xi}\in
{\mathbb R}$ depend smoothly on $\xi$ and satisfy
$$|\sigma_{t,\xi}-\sigma_t| + |\sigma_{b,\xi}-\sigma_b| 
+ |\varsigma_{t,\xi}|+ |\varsigma_{b,\xi}|+
\|w_{t,\xi}-w_t\|_{C^{2,\a}_{-2}([s_0,+\infty)\times S^1)}+
\|w_{b,\xi}-w_b\|_{C^{2,\a}_{-2}((-\infty,-s_0]\times
  S^1)}\leqslant c|\xi|, $$
where
\[
\|w\|_{{\cal C}^{\ell,\a}_\delta([s_0,+\infty)\times S^1)}
=\sup_{s \geqslant s_0}\left( e^{-\delta s}\,\|w\|_{{\cal
      C}^{\ell,\a}([s,s+1] \times S^1)} \right) ,
\]
\[
\|w\|_{{\cal C}^{\ell,\a}_\delta((-\infty,-s_0])\times S^1)}
=\sup_{s \leqslant -s_0}\left( e^{-\delta s}\,\|w\|_{{\cal
      C}^{\ell,\a}([s-1,s] \times  S^1)} \right).
\]
\end{theorem}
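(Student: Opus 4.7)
\medskip
\noindent\textbf{Proof plan.} The plan is to apply an implicit function theorem at $\xi=0$, starting from the unperturbed surface $M_k=M_k(0)$. First I would set up local parametrizations of the target surface on three end pieces plus a compact core: the top catenoidal end via the ansatz (\ref{parameterization.topend}) with $w_{t,\xi}\in C^{2,\a}_{-2}([s_0,\infty)\times S^1)$ and scalar unknowns $\s_{t,\xi},\varsigma_{t,\xi}\in\R$; the bottom end analogously; the middle planar end as $X_m^\xi(x)=(x/|x|^2,u_m^\xi(x))$ with $u_m^\xi$ satisfying (\ref{eq.min.piano.base2}) and extending continuously to the puncture; and the compact core as a small normal graph over $M_k$ determined by a function $v\in C^{2,\a}$. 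I would splice these pieces via cutoff functions on the overlap regions and work throughout in the subspace of functions even under $\t\mapsto-\t$ on each catenoidal end and $x_2\mapsto-x_2$ elsewhere, so that invariance across $\{x_2=0\}$ is automatic. The minimality condition then becomes a nonlinear elliptic system $F(\xi,v,w_{t,\xi},w_{b,\xi},u_m^\xi,\s_{t,\xi},\varsigma_{t,\xi},\s_{b,\xi},\varsigma_{b,\xi})=0$ which vanishes at $\xi=0$ on the reference configuration of $M_k$.

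Next I would analyze the linearization $dF|_{\xi=0}$ in these weighted H\"older spaces. On each piece it is controlled by the relevant Jacobi operator: $\bL_C$ on the catenoidal ends as in (\ref{eq.catenoide}), the one of (\ref{operator.middle.end}) on the planar end, and $\cL_{M_k}$ on the compact core, coupled by the cutoffs and by the four scalar deficiency parameters. The key point is that on a catenoidal end the bounded $\t$-even Jacobi fields of $\bL_C$, namely the mode $\tanh s$ (coming from $e_3$-translation of the catenoid) and the $n=1$ mode $\cos\t/\cosh s$ (coming from $e_1$-translation), do not lie in $C^{2,\a}_{-2}$ and hence appear in the cokernel on that end; they are absorbed precisely by the four deficiency parameters $\s_{t,\xi},\varsigma_{t,\xi},\s_{b,\xi},\varsigma_{b,\xi}$. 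Globally on $M_k$, the symmetric rigid motions are translations by $e_1$ and $e_3$ and rotation about the $e_2$-axis. The first two give Jacobi fields whose asymptotic behavior on each catenoidal end is matched by the deficiency parameters above; the last produces the unbounded catenoidal mode $\sinh s\,\cos\t$, signalling the tilt. Modulo fixing the overall position of the surface (for instance by normalizing the planar end to sit at height zero), the augmented linearization should then restrict to an isomorphism on the symmetric weighted space, with the remaining one-dimensional kernel generated by the bending direction.

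Once this isomorphism is established, the implicit function theorem produces the smooth one-parameter family $\{M_k(\xi)\}$ as $\xi$ varies in a small interval around $0$, and the stated estimates $|\s_{t,\xi}-\s_t|+\cdots\leqslant c|\xi|$ follow from the bounded inverse applied to the $\mathcal{O}(|\xi|)$ right-hand side produced by substituting the reference data into the bent ansatz. The hardest part of the argument is the global nondegeneracy analysis of $\cL_{M_k}$ in the symmetric weighted category, i.e.\ verifying that no bounded $x_2$-even Jacobi field on $M_k$ exists outside of those coming from the three rigid motions $e_1$-translation, $e_3$-translation and $e_2$-rotation. This is precisely the type of nondegeneracy statement whose failure in the fully unsymmetric setting, as the introduction already notes citing Morabito~\cite{M1}, forces the restriction to one-parameter subfamilies in Theorem~\ref{th2}.
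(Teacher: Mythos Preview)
Your approach is essentially the one the paper invokes: the theorem is quoted from \cite{HP}, and the paper only records that it is obtained ``using an elaborate version of the implicit function theorem and following \cite{J2} and \cite{KMP}''. Your outline---weighted H\"older spaces on the ends, deficiency parameters to capture the low Fourier modes not in $C^{2,\alpha}_{-2}$, symmetry reduction to the $\{x_2=0\}$-even sector, and an implicit function theorem once the linearization is shown to be an isomorphism---is exactly this scheme.

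Two points where your sketch undercounts compared with the actual linear analysis carried out in the paper (and in \cite{HP}). First, in enumerating the global symmetric Jacobi fields on $M_k$ you list only $e_1$-translation, $e_3$-translation and $e_2$-rotation; you omit the dilation field $\Phi^{0,-}(p)=n(p)\cdot p$, which is linearly growing on the catenoidal ends and sits in the same deficiency analysis as the rotation field $\Phi^{1,-}$. Second, and relatedly, the paper's deficiency space $\mathcal{D}$ is eight-dimensional (the four fields $\Phi^{j,\pm}$, $j=0,1$, localized separately to $E_t$ and $E_b$), not four, and on $C^{2,\alpha}_\delta(M_k)\oplus\mathcal{D}$ with $\delta\in(-2,-1)$ the operator is surjective with four-dimensional kernel, not one-dimensional. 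The four scalars $\sigma_{t,\xi},\varsigma_{t,\xi},\sigma_{b,\xi},\varsigma_{b,\xi}$ in the ansatz account only for the translation-type deficiencies; the dilation and rotation deficiencies are absorbed, respectively, by a normalization (the catenoid is the standard one) and by the bending parameter $\xi$ itself. Your identification of the nondegeneracy input is correct: the bounded symmetric Jacobi fields are exactly $\Phi^{0,+}$ and $\Phi^{1,+}$, which is Nayatani's result \cite{N1,N2} (extended in \cite{M1}), and this is what makes the augmented linearization invertible.
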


\noindent
For all $s>s_0$ and $\rho <\rho_0$, we define
\begin{equation}
\label{definitionMk2}
M_k(\xi,s,\rho):= M_k(\xi)- \left(X_{t,\xi}([s,\infty)\times S^1)
  \cup X_m(B_\rho(0)) \cup X_{b,\xi}((-\infty,-s]\times S^1)\right).
\end{equation}
The parametrizations of the three ends of $M_k(\xi)$ induce a
decomposition of $M_k(\xi)$ into slightly overlapping components: a
compact piece $M_k(\xi,s_0+1, \rho_0/2)$ and three noncompact pieces
$X_{t,\xi} ((s_0 ,\infty) \times  S^1)$, $X_{b,\xi} ((-\infty, -s_0)
\times  S^1)$ and $X_m (\bar B_{\rho_0} (0))$.\\

We define the weighted space of functions on $M_{k}(\xi)$.

\begin{definition}
\label{definition.space.xi}
Given $\ell \in {\mathbb N}$, $\alpha \in (0,1)$ and $\delta \in
{\mathbb R}$, we define ${\mathcal C}^{\ell , \alpha}_{\delta }
(M_k(\xi))$ as the space of functions in ${\mathcal C}^{\ell ,
  \alpha}_{loc} (M_k(\xi))$ for which the following norm is finite
\[
\| w \|_{{\mathcal C}^{\ell , \alpha}_{\delta} (M_k(\xi))}  : = 
\| w \|_{{\mathcal C}^{\ell , \alpha} (M_k (\xi,s_0+1, \rho_0/2))} + \|
\, w \circ X_m \|_{{\mathcal C}^{\ell , \alpha}  (B_{\rho_0}(0))}
\]
\[
+ \|w\circ X_{t,\xi}\|_{{\cal C}^{\ell,\a}_\delta ([s_0,+\infty) \times
   S^1)}
+\|w\circ X_{b,\xi}\|_{{\cal C}^{\ell,\a}_\delta ((-\infty,-s_0] \times  S^1)}
\]
and which are invariant by the reflection symmetry across the
$\{x_2=0\}$ plane, i.e. $w(p) = w(\bar p)$ for all $p=(p_1,p_2,p_3)\in
M_k(\xi),$ where $\bar p := (p_1,-p_2, p_3)$.
\end{definition}

We remark that there is no weight on the planar end $E_{m}$ of
$M_{k}(\xi)$. In fact, we can compactify this end and consider a
weighted space of functions defined on a two-ended surface. In the
next section we will consider normal perturbations of $M_{k}(\xi)$ by
functions $u\in{\cal C}^{2,\a}_{\delta}$ and the planar end $E_{m}$
will be just vertically translated.\\

\noindent
{\bf The Jacobi operator.} The Jacobi operator about $M_k (\xi)$ is
$${\mathbb L}_{M_k(\xi)}:=\Delta_{M_k (\xi)}+ |A_{M_k (\xi)}|^2$$
where $|A_{M_k(\xi)}|$ is the norm of the second fundamental form on
$M_k(\xi)$. \\

In the parametrization of the ends of $M_k (\xi)$ introduced above,
the volume form $dvol_{M_k(\xi)}$ can be written as $\g_t\,ds\,d\t$ (
resp. $\g_b\,ds\,d\t,\ \gamma_m \,dx_1 \, dx_2$) near $E_t(\xi)$
(resp.  $E_b(\xi),\ E_m$). We define globally on $M_k(\xi)$ a smooth
function
\[
\gamma : M_k  (\xi) \longrightarrow [0, \infty)
\]
that equals $1$ on $M_k (\xi, s_0-1, 2 \rho_0)$ and equals $\gamma_t$
(resp. $\gamma_b$, $\gamma_m$) on the end $E_{t}(\xi)$ (resp.
$E_{b}(\xi)$, $E_m$). Observe that
\[
(\gamma \circ X_{t,\xi}) (s, \theta) \sim \cosh^2 s \quad \mbox{on }
X_{t,\xi}((s_0, \infty)\times  S^1) ,
\]
\[
(\gamma \circ X_{b,\xi}) (s, \theta) \sim \cosh^2 s \quad \mbox{on }
X_{b,\xi} ((-\infty,-s_{0})\times  S^1) ,
\]
\[
(\gamma \circ X_m)(x) \sim |x|^{-4} \quad \mbox{on } B_{\rho_0} .
\]

Granted the above defined spaces, one can check that:
\[
\begin{array}{rccccllll}
{\mathcal L}_{\xi,\delta} : &  {\mathcal C}^{2, \alpha}_{\delta } (M_k (\xi)) &
\longrightarrow & {\mathcal C}^{0, \alpha}_{\delta }
(M_k(\xi))\\[3mm]
& w & \longmapsto & \gamma  \, {\mathbb L}_{M_k (\xi)} \, (w)
\end{array}
\]
is a bounded linear operator. The subscript $\delta$ is meant to
keep track of the weighted space over which the Jacobi operator is
acting. Observe that, the function $\gamma$ is here to
counterbalance the effect of the conformal factor
$\frac{1}{\sqrt{|g_{M_k(\xi)}|}}$ in the expression of the Laplacian in
the coordinates we use to parametrize the ends of the surface
$M_k (\xi)$. This is precisely what is needed to have the operator defined
from the space ${\mathcal C}^{2, \alpha}_{\delta } (M_k(\xi))$ into the
target space ${\mathcal C}^{0, \alpha}_{\delta } (M_k(\xi))$.\\

To have a better grasp of what is going on, let us linearize the
nonlinear equation (\ref{eq.catenoide}) at $w = 0.$ We get the expression
of the Jacobi operator about the standard catenoid
\[
{\mathbb L}_C : = \frac{_1}{^{\cosh^2 s}} \, \left( \partial_s^2+
\partial_\theta^2 + \frac{_2}{^{\cosh^2 s}}\right).
\]
We can observe that the operator $\cosh^{2} s \, {\mathbb L}_C$ maps the
space $(\cosh s)^\delta \, {\mathcal C}^{2, \alpha} ((s_0,
+\infty)\times  S^1)$ into the space $(\cosh s)^\delta \, {\mathcal
C}^{0, \alpha} ((s_0, +\infty)\times  S^1)$.\\

Similarly, if we linearize the nonlinear equation \eqref{eq.min.piano.base2} at
$u = 0$, we obtain (see \eqref{linearizzato.planar} with $u=0$)
the expression of the Jacobi operator about the plane
\[
{\mathbb L}_{{\mathbb R}^2} : = |x|^4 \, \Delta_0.
\]
Again, the operator $|x|^{-4} \,{\mathbb L}_{{\mathbb R}^2}= \Delta_0$
clearly maps the space ${\mathcal C}^{2, \alpha}(\bar B_{\rho_0})$ into
the space ${\mathcal C}^{0, \alpha}(\bar B_{\rho_0})$. Now, the
function $\gamma$ plays, for the ends of the surface $M_k(\xi)$, the role
played by  the function $\cosh^2 s$ for the ends of the standard
catenoid and the role played by the function $|x|^{-4}$ for the
plane. Since the Jacobi operator about $M_k(\xi)$ is asymptotic to
${\mathbb L}_{{\mathbb R}^2}$ at $E_m$ and is asymptotic to
${\mathbb L}_C$ at $E_{t}(\xi)$ and $E_b(\xi)$, 
we conclude that the operator
${\mathcal L}_{\xi,\delta}$ maps ${\mathcal C}^{2, \alpha}_{\delta } (M_k(\xi))$ into
${\mathcal C}^{0, \alpha}_{\delta } (M_k(\xi))$.\\

We recall the notion of non degeneracy  introduced in \cite{HP}:

\begin{definition}
\label{nondegeneracy.xi}
The surface $M_k(\xi)$ is said to be non degenerate if $\cL_{\xi , \delta}$ 
is injective for all $\delta < -1.$
\end{definition}

It useful to observe that a duality argument in the  weighted Lebesgue spaces, implies 
$$ \left(\cL_{\xi,\delta} \quad \mbox{is injective} \right)  \quad
\Leftrightarrow \quad \left( \cL_{\xi,-\delta} \quad \mbox{is
    surjective}\right)$$
provided $\delta \notin {\mathbb Z}$.  See \cite{J2,Me} for more
details.\\

The non degeneracy of $M_k(\xi)$ follows from the study of the kernel
of $\cL_{\xi,\delta}$.\\

\noindent {\bf The Jacobi fields.} It is known that a smooth
1-parameter group of isometries containing the identity generates a
Jacobi field, that is a solution of the equation ${\mathbb
  L}_{M_k(\xi)}u=0$. The solutions which are invariant under the
action of the reflection symmetry across the $\{x_2=0\}$ plane, are
generated by dilations, vertical translations and horizontal
translations along the $x_1$-axis (we refer~\cite{HP} for details):

\begin{itemize}
\item The group of vertical translations generated by the Killing
  vector field $\Xi(p) = e_3$ gives rise to the Jacobi field $
  \Phi^{0,+}(p) : = n (p) \cdot e_3$.

\item The vector field $\Xi(p) =p$ associated to the 1-parameter group
  of dilation, generates the Jacobi fields $\Phi^{0,-} (p) := n(p)
  \cdot p$.
  
\item The Killing vector field $\Xi(p) = e_1$ that generates the group
  of translations along the $x_1$-axis is associated to a Jacobi field
  $\Phi^{1,+} (p) : = n(p) \cdot e_1$.
  
\item Finally, we denote by $\Phi^{1,-} (p) := n(p) \cdot (e_2 \times
  p)$ the Jacobi field associated to the Killing vector field $\Xi (p)
  = e_2\times p$ that generates the group of rotations about the
  $x_2$-axis.
\end{itemize}
There are other Jacobi fields we do not take into account because they
are not invariant by the reflection symmetry across
the $\{x_2=0\}$ plane.\\

With these notations, we define the deficiency space
\[
{\mathcal D} : =  \mbox{Span} \{ \chi_t \, \Phi^{j,\pm }, \chi_b \,
\Phi^{j, \pm} \,  : \, j=0, 1\}
\]
where $\chi_t$ is a cutoff function that is identically equal to $1$
on $X_{t,\xi} ((s_0+1,\infty)\times  S^1)$, identically equal to $0$
on $M_k(\xi) -X_{t,\xi} ((s_0,\infty)\times  S^1)$ and that is
invariant under the action of the symmetry with respect to the $\{x_2
=0\}$ plane; and
\[
\chi_b (\cdot) : = \chi_t (- \, \cdot).
\]
Clearly,
\[
\begin{array}{rcccllll}
\tilde {\mathcal L}_{\xi,\delta} :  & {\mathcal C}^{2, \alpha}_{\delta}
(M_k(\xi)) \oplus {\mathcal D} & \longrightarrow & {\mathcal C}^{0,
\alpha}_{\delta} (M_k(\xi)) \\[3mm]
& w &  \longmapsto & \gamma \, {\mathbb L}_{M_k (\xi)} \, (w)
\end{array}
\]
is a bounded linear operator, for $\delta <0$.\\

A result of S. Nayatani~\cite{N1, N2} extended by the second
author~\cite{M1} 
states that any bounded Jacobi field invariant by the reflection
symmetry across the $\{x_2=0\}$ plane, is linear combination of
$\Phi^{0,+}$ and $\Phi ^{1,+}$.  This fact together with an adaptation
to our setting of the linear decomposition lemma proved in~\cite{KMP}
for constant mean curvature surfaces (see also \cite{J2} for minimal
hypersurfaces), allows us to prove the following result.

\begin{proposition}
We fix $\delta \in (-2,-1).$
Then (reducing $\xi_0$ if this is necessary) the operator 
$\tilde {\mathcal L}_{\xi, \delta},$ for $|\xi| < \xi_0,$ 
is surjective and has a kernel of dimension $4$.
\end{proposition}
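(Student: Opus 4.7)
\smallskip\noindent\textbf{Proof proposal.} My plan is to prove the result at $\xi=0$ by a Fredholm index count combined with nondegeneracy, and then extend it to $|\xi|<\xi_0$ by a perturbation argument. First, I would verify that $\mathcal{L}_{0,\delta}$ (and hence $\tilde{\mathcal{L}}_{0,\delta}$) is Fredholm on the specified weighted H\"older spaces, since $\delta\in(-2,-1)$ avoids all indicial roots of the Jacobi operator at the two catenoidal ends (the planar end is not weighted and is unproblematic). Between the dual weight $-\delta\in(1,2)$ and $\delta\in(-2,-1)$, one crosses the indicial roots $1$, $0$, $-1$, whose reflection-symmetric indicial kernels on each catenoidal end have dimensions $1$, $2$, $1$ respectively (the $\cos\theta$-mode at $\pm 1$, and the two mode-$0$ solutions $1$ and $s$ at $0$). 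The relative index formula together with the self-duality $\mathrm{ind}\,\mathcal{L}_{0,\delta}=-\mathrm{ind}\,\mathcal{L}_{0,-\delta}$ then gives $\mathrm{ind}\,\mathcal{L}_{0,\delta}=-4$ and $\mathrm{ind}\,\tilde{\mathcal{L}}_{0,\delta}=-4+\dim\mathcal{D}=4$.

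Injectivity of $\mathcal{L}_{0,\delta}$ follows from the Nayatani--Morabito theorem invoked in the excerpt: a kernel element is a reflection-symmetric Jacobi field on $M_k$ decaying faster than $e^{-s}$, and since the only bounded reflection-symmetric Jacobi fields are linear combinations of $\Phi^{0,+}$ (non-decaying) and $\Phi^{1,+}$ (decay exactly $e^{-s}$), such an element must vanish. By duality, $\mathcal{L}_{0,-\delta}$ is surjective with a $4$-dimensional kernel, which is precisely $\mathrm{Span}\{\Phi^{0,\pm},\Phi^{1,\pm}\}$, as these four global Jacobi fields are symmetric, lie in $\ker\mathcal{L}_{0,-\delta}$, and are linearly independent (their asymptotic growths $1$, $s$, $e^{-s}\cos\theta$, $e^{s}\cos\theta$ occupy distinct indicial modes). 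Each of them also lies in $\mathcal{C}^{2,\alpha}_\delta\oplus\mathcal{D}$: the decomposition $\Phi^{j,\pm}=\chi_t\Phi^{j,\pm}+\chi_b\Phi^{j,\pm}+(1-\chi_t-\chi_b)\Phi^{j,\pm}$ exhibits the first two summands as elements of $\mathcal{D}$ and the third as a compactly supported piece of $\mathcal{C}^{2,\alpha}_\delta$. This produces a $4$-dimensional subspace of $\ker\tilde{\mathcal{L}}_{0,\delta}$.

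For surjectivity, I would use a duality argument: the cokernel consists of Jacobi fields $v\in\ker\mathcal{L}_{0,-\delta}$ that are $L^2$-orthogonal to $\gamma\,\mathbb{L}(d)$ for every $d\in\mathcal{D}$. Since $\mathbb{L}(\chi_{t/b}\Phi^{j,\pm})=[\mathbb{L},\chi_{t/b}]\Phi^{j,\pm}$ is supported on the cutoff strip, integration by parts converts each pairing $\int v\,\gamma\,\mathbb{L}(\chi_{t/b}\Phi^{j,\pm})$ into the asymptotic Wronskian of $v$ and $\Phi^{j,\pm}$ at the corresponding catenoidal end. The resulting $8\times 4$ pairing matrix decouples as two $4\times 4$ blocks (one per end), which further split by Fourier mode into $2\times 2$ Wronskian blocks for two linearly independent ODE solutions; each such block is invertible. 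Hence no nonzero $v$ can satisfy all eight constraints, the cokernel vanishes, and $\tilde{\mathcal{L}}_{0,\delta}$ is surjective with kernel of dimension exactly $4$.

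The extension to $|\xi|<\xi_0$ is then a standard perturbation argument: $\xi\mapsto\tilde{\mathcal{L}}_{\xi,\delta}$ is a smooth family of Fredholm operators by Theorem~\ref{existence.Mkx}, surjectivity is an open property, and the Fredholm index is locally constant, so both persist on a possibly smaller symmetric interval. The main obstacle is the Wronskian non-degeneracy in the surjectivity step; it ultimately rests on the asymptotic independence of the four symmetric global Jacobi fields $\Phi^{0,+}$, $\Phi^{0,-}$, $\Phi^{1,+}$, $\Phi^{1,-}$ at each catenoidal end, which manifestly span the four indicial directions of interest.
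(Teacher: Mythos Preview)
Your overall strategy---Fredholm index count, nondegeneracy from Nayatani--Morabito, then a Wronskian/boundary-pairing argument for surjectivity, and finally perturbation in $\xi$---is exactly the content of the ``linear decomposition lemma'' that the paper invokes from \cite{KMP,J2}. So you are essentially reproving what the paper cites. The index computation and the injectivity step are fine.

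There is, however, a genuine gap in your identification of the kernel. You claim that $(1-\chi_t-\chi_b)\Phi^{j,\pm}$ is compactly supported, but this is false: $1-\chi_t-\chi_b$ equals $1$ on the entire planar end $E_m$, not just on a compact piece. For $\Phi^{1,-}(p)=n(p)\cdot(e_2\times p)$, one computes on the planar end (where $n\approx e_3$ and $p\approx(y_1,y_2,0)$ with $|y|\to\infty$) that $\Phi^{1,-}\approx -y_1$, which is unbounded. In the $x$-coordinate this is $-x_1/|x|^2$, certainly not in $\mathcal C^{2,\alpha}(B_{\rho_0})$. Hence $\Phi^{1,-}\notin\mathcal C^{2,\alpha}_\delta(M_k)\oplus\mathcal D$, and equally $\Phi^{1,-}\notin\mathcal C^{2,\alpha}_{-\delta}(M_k)$, so your description of both $\ker\tilde{\mathcal L}_{0,\delta}$ and $\ker\mathcal L_{0,-\delta}$ as $\mathrm{Span}\{\Phi^{0,\pm},\Phi^{1,\pm}\}$ is incorrect. (Geometrically: rotating about the $x_2$-axis tilts the planar end, so the associated Jacobi field must blow up there.) The fourth kernel element is a non-geometric Jacobi field---for instance, the one coming from the bending deformation $\xi\mapsto M_k(\xi)$, which keeps the planar end horizontal.

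The fix is simple: do not try to exhibit the kernel explicitly. Your Wronskian argument for surjectivity does not actually need a basis of $\ker\mathcal L_{0,-\delta}$. Argue instead that if $v\in\ker\mathcal L_{0,-\delta}$ pairs to zero with every element of $\mathcal D$, then all four asymptotic Wronskians of $v$ vanish at each catenoidal end, forcing $v\in\mathcal C^{2,\alpha}_\delta$ at both ends and hence $v\in\ker\mathcal L_{0,\delta}=0$. This gives surjectivity of $\tilde{\mathcal L}_{0,\delta}$, and then $\dim\ker\tilde{\mathcal L}_{0,\delta}=\mathrm{ind}\,\tilde{\mathcal L}_{0,\delta}=4$ follows. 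Your ``two $4\times4$ blocks, each invertible'' formulation is misleading because it presupposes that the four kernel elements have linearly independent asymptotics at a single end, which you have not established.
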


\noindent
From that we get the following one about the operator 
${\mathcal L}_{\xi , \delta}$

\begin{proposition}
We fix $\delta \in (1,2).$
Then (reducing $\xi_0$ if this is necessary) the operator 
${\mathcal L}_{\xi, \delta}$ is surjective and 
there exists $G_{\xi , \delta }$ 
a right inverse for ${\mathcal L}_{\xi , \delta}$ that depends
 smoothly on $\xi$ and in particular whose norm is bounded 
 uniformly as $|\xi| < \xi_0$.
\label{inverse.operator.costa}
\end{proposition}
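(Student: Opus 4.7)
The plan is to deduce this proposition from the previous one on $\tilde{\cL}_{\xi,\delta'}$ via the duality principle recorded in the excerpt: the Jacobi operator $\mathbb{L}_{M_k(\xi)}$ is formally self-adjoint, so in the weighted Lebesgue setting
\[
\left(\cL_{\xi,\delta'}\ \text{is injective}\right)\ \Longleftrightarrow\ \left(\cL_{\xi,-\delta'}\ \text{is surjective}\right)
\]
for $\delta'\notin\Z$. Taking $\delta'=-\delta\in(-2,-1)$ reduces the surjectivity of $\cL_{\xi,\delta}$ for $\delta\in(1,2)$ to the injectivity of $\cL_{\xi,-\delta}$ on $\mathcal{C}^{2,\alpha}_{-\delta}(M_k(\xi))$, which is the main step.

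To prove this injectivity, I would take $w\in\mathcal{C}^{2,\alpha}_{-\delta}(M_k(\xi))$ with $\mathbb{L}_{M_k(\xi)}w=0$. By Definition~\ref{definition.space.xi}, $w$ is reflection-symmetric across $\{x_2=0\}$, bounded on the planar end (no weight is imposed there), and decays on each catenoidal end at rate $e^{-\delta s}$ with $-\delta<-1$. In particular $w$ is a bounded reflection-symmetric Jacobi field on $M_k(\xi)$, so the Nayatani-Morabito classification recalled in the excerpt forces $w=a\,\Phi^{0,+}+b\,\Phi^{1,+}$ for some $a,b\in\R$. However $\Phi^{0,+}=n\cdot e_3$ is bounded but does not decay along the catenoidal ends, while $\Phi^{1,+}=n\cdot e_1$ decays only at rate $e^{-s}$, which is not faster than $e^{-\delta s}$ for $\delta>1$; hence neither belongs to $\mathcal{C}^{2,\alpha}_{-\delta}(M_k(\xi))$, whence $a=b=0$ and $w\equiv 0$. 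The duality principle then yields the surjectivity of $\cL_{\xi,\delta}$.

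For the right inverse $G_{\xi,\delta}$ I would argue in the spirit of \cite{HP,KMP,J2}. Surjectivity of $\cL_{\xi,\delta}$, combined with its Fredholm character between the weighted H\"older spaces, provides a closed finite-dimensional kernel and hence a topological direct complement on which $\cL_{\xi,\delta}$ restricts to a bounded bijection onto $\mathcal{C}^{0,\alpha}_\delta(M_k(\xi))$; inverting there defines $G_{\xi,\delta}$. For smooth dependence on $\xi$ and the uniform bound for $|\xi|<\xi_0$, I would rely on three facts: the coefficients of $\cL_{\xi,\delta}$ depend smoothly on $\xi$ by Theorem~\ref{existence.Mkx}, the right inverse $\tilde G_{\xi,\delta'}$ supplied by the previous proposition is itself uniformly bounded, and invertibility of a smoothly varying family of bounded operators persists in a neighbourhood of $\xi=0$ with smoothly varying inverse; reducing $\xi_0$ if necessary then gives the conclusion.

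The main obstacle I anticipate is the injectivity step, specifically the verification that every element of $\ker\cL_{\xi,-\delta}$ is genuinely bounded on all of $M_k(\xi)$ (so that the Nayatani-Morabito classification applies) and that $\Phi^{0,+},\Phi^{1,+}\notin\mathcal{C}^{2,\alpha}_{-\delta}(M_k(\xi))$; both points rest on precise asymptotic expansions of the Gauss map at the (slightly tilted) catenoidal and planar ends, which must moreover be uniform in $\xi$. A secondary technical issue, standard but not entirely free, is to lift the duality identity from the weighted Lebesgue setting, where it is cleanest, to the weighted H\"older setting used here (see \cite{Me,J2}).
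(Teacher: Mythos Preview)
Your proposal is correct and follows essentially the same path as the paper: the duality principle and the Nayatani--Morabito classification of bounded symmetric Jacobi fields are precisely the tools the paper invokes, and your injectivity argument (checking that neither $\Phi^{0,+}\sim\pm\tanh s$ nor $\Phi^{1,+}\sim\cos\theta/\cosh s$ lies in ${\mathcal C}^{2,\alpha}_{-\delta}$ for $-\delta\in(-2,-1)$) is exactly what underlies the non-degeneracy used there. The paper phrases Proposition~\ref{inverse.operator.costa} as a consequence of the preceding result on $\tilde{\mathcal L}_{\xi,\delta}$ rather than appealing directly to duality, but the two presentations encode the same information via the linear decomposition lemma, and your treatment of the smooth $\xi$-dependence of $G_{\xi,\delta}$ matches what is needed.
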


\section{Infinite dimensional family of minimal surfaces close to $M_k
  (\xi)$}
\label{family.costa.xi}

In this section we consider a truncature of $M_{k}(\xi)$.  First we
write portions of the ends of $M_k(\xi)$ as vertical graphs over the
$\{x_3=0\}$ plane.

\begin{lemma}[\cite{HP}]
\label{annular.part.xi}
There exists $\e_0>0$ such that, for all $\e \in (0, \e_0)$ and all
$|\xi |\leqslant \e$, an annular part of the ends $E_t(\xi)$,
$E_b(\xi)$ and $E_m$ of $M_k (\xi)$ can be written, respectively, as
vertical graphs over the annulus $B_{2r_\e}-B_{r_\e/2}$ for the
functions
$$\begin{array}{l}
U _{t} (r,\theta )=\sigma_{t, \xi} +\ln (2 r) - \xi \, r \, \cos \theta +
{\mathcal O}_{{\mathcal C}^\infty_b} (\e),\\
U_{b} (r , \theta )  =  - \sigma_{b, \xi} - \ln (2r)  -
\xi \, r \, \cos \theta +  {\mathcal O}_{{\mathcal
C}^\infty_b} (\e) ,\\
 U_{m}  (r , \theta )  =   {\mathcal O}_{{\mathcal
C}^\infty_b} (r^{-(k+1)}).
\end{array}$$

Here $(r, \theta)$ are the polar coordinates in the $\{x_3=0\}$ plane.
The functions ${\mathcal O}(\e)$ are defined in the annulus $B_{2
  \,r_\e} - B_{r_\e / 2}$ and are bounded in the ${\mathcal
  C}^\infty_b$ topology by a constant (independent on $\e$) multiplied
by $\e$, where the partial derivatives are computed with respect to
the vector fields $r \, \partial_r$ and $\partial_\theta$.
\end{lemma}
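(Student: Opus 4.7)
The idea is to invert the horizontal projection of each end parametrization in order to rewrite each end as a vertical graph, then expand the resulting graph function in $\xi$.

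For the top end I would first consider the unperturbed situation. The catenoid parametrized by $X_c(s,\theta)=(\cosh s\cos\theta,\cosh s\sin\theta,s)$ projects onto the horizontal plane as $(s,\theta)\mapsto \cosh s\,(\cos\theta,\sin\theta)$, so writing $r=\cosh s$ gives $s=\mathrm{arccosh}(r)=\ln(2r)+\mathcal O_{\mathcal C^\infty_b}(r^{-2})$ for $r$ large, with derivatives controlled when measured against the scale-invariant fields $r\partial_r,\partial_\theta$. For the full parametrization
\[
X_{t,\xi}(s,\theta)=R_\xi(X_c(s,\theta)+w_{t,\xi}(s,\theta)\,n_c(s,\theta))+\sigma_{t,\xi}e_3+\varsigma_{t,\xi}e_1,
\]
the first two components define a map $F_\xi:(s,\theta)\mapsto(F_\xi^1,F_\xi^2)$ which, for $|\xi|\leqslant\e$ small and $s\geqslant s_\e$ with $s_\e\to\infty$ suitably, is an $\e$-perturbation of $(s,\theta)\mapsto\cosh s\,(\cos\theta,\sin\theta)$ in the scale-invariant norm because $w_{t,\xi}n_c=\mathcal O_{\mathcal C^{2,\alpha}_{-2}}(1)$ decays exponentially in $s$ (hence like $r^{-2}$) and $R_\xi$ contributes terms of size $|\xi|\cdot(\cosh s+s)$ in the first component. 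I would apply the quantitative inverse function theorem to $F_\xi$, after rescaling by $1/\cosh s$, to obtain a smooth inverse $F_\xi^{-1}(r,\theta')=(s(r,\theta'),\theta(r,\theta'))$ on the annulus $B_{2r_\e}\setminus B_{r_\e/2}$, provided $r_\e$ is chosen large enough (e.g.\ $r_\e\gtrsim\e^{-1/2}$) so that all the error terms $e^{-2s}$ and $|\xi|$ contribute $\mathcal O_{\mathcal C^\infty_b}(\e)$ uniformly there.

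Then the graph function is $U_t(r,\theta')=X_{t,\xi}^3\circ F_\xi^{-1}(r,\theta')$. Using $R_\xi e_3=\cos\xi\,e_3-\sin\xi\,e_1$, the third component reads
\[
X_{t,\xi}^3(s,\theta)=-\cosh s\cos\theta\sin\xi+s\cos\xi+\sigma_{t,\xi}+\mathcal O_{\mathcal C^\infty_b}(e^{-2s}),
\]
and substituting the inversion $s=\ln(2r)+\mathcal O(r^{-2})$, $\theta=\theta'+\mathcal O(|\xi|/r)$, while expanding $\cos\xi=1+\mathcal O(\xi^2)$ and $\sin\xi=\xi+\mathcal O(\xi^3)$, gives
\[
U_t(r,\theta')=\sigma_{t,\xi}+\ln(2r)-\xi r\cos\theta'+\mathcal O_{\mathcal C^\infty_b}(\e).
\]
The bound $\mathcal O_{\mathcal C^\infty_b}(\e)$ collects: the $\mathcal O(\xi^2\cdot r)$ term from $\cos\xi-1$ (which is $\mathcal O(\e)$ only because $r\leqslant 2r_\e$ and $r_\e$ is chosen so that $\e^2 r_\e\leqslant\e$, forcing $r_\e\leqslant 1/\e$); the exponentially small contribution from $w_{t,\xi}$ and from $\mathrm{arccosh}(r)-\ln(2r)$; and the shift by $\varsigma_{t,\xi}$ which, after pullback through $F_\xi^{-1}$, produces a bounded $\mathcal O(|\xi|)$ contribution to $U_t$. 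The bottom end is treated by the same argument with the obvious sign changes.

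For the middle end, the parametrization $X_m(x)=(x/|x|^2,u_m(x))$ already projects by the explicit involution $y=x/|x|^2$, hence $r=|y|=1/|x|$ and $U_m(y)=u_m(y/|y|^2)$. Since $u_m=\mathcal O_{\mathcal C^{2,\alpha}_b}(|x|^{k+1})$, the substitution $|x|=1/r$ gives $U_m(r,\theta)=\mathcal O_{\mathcal C^\infty_b}(r^{-(k+1)})$ with derivatives in $r\partial_r,\partial_\theta$, as claimed. Finally, one chooses $r_\e$ in the common range where all three ends admit the graph description and where the three error estimates are simultaneously of order $\e$; concretely $r_\e$ is any value in a window of the form $[c_1\e^{-1/2},c_2\e^{-1}]$ ensuring both $e^{-2s_\e}\lesssim\e$ and $\e^2r_\e\lesssim\e$.

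The main obstacle is the rotation-induced term $-\xi r\cos\theta$, which grows linearly in $r$: it is the largest perturbation and it dictates the upper bound on $r_\e$, i.e.\ the relation between $\e$ and the size of the annulus on which the graph description with uniform $\mathcal O(\e)$ error is valid. Everything else reduces to bookkeeping in the scale-invariant norm and routine application of the inverse function theorem.
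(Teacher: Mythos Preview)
The paper does not prove this lemma itself; it is quoted from \cite{HP} and used as input. Your argument is the natural one and is correct: invert the horizontal projection of each end parametrization with the inverse function theorem in scale-invariant norms, then read off the third coordinate as the graph function and expand in~$\xi$. The treatment of the planar end via the explicit inversion $y=x/|x|^2$ is exactly right.

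One minor bookkeeping point. You locate an $\mathcal O(\xi^2 r)$ error in $U_t$ coming ``from $\cos\xi-1$'', but in your own expression for $X_{t,\xi}^3$ the factor $\cos\xi$ multiplies $s\approx\ln(2r)$, not $\cosh s\approx r$, so that contribution is only $\mathcal O(\e^2|\ln\e|)$. The place where $\cos\xi-1$ does produce a term of size $\xi^2\cosh s$ is in the \emph{horizontal} component $X^1$; after inversion this feeds back into $U_t$ only through $\xi\cdot(\cosh s-r)$, which is $\mathcal O(\e^3 r_\e)$ --- an even milder constraint ($r_\e\lesssim\e^{-2}$) than the $r_\e\lesssim\e^{-1}$ you state. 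In any case, with the choice $r_\e=\tfrac12\e^{-1/2}$ made immediately after the lemma in the paper, all remainders are comfortably $\mathcal O(\e)$, and your window $[c_1\e^{-1/2},c_2\e^{-1}]$ is valid (if more generous than necessary).
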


In particular, a portion of the two catenoidal ends $E_{t}(\e/2)$ and
$E_{b}(\e/2)$ of $M_{k}(\e/2)$ are graphs over the annulus $B_{2
  \,r_\e} - B_{r_\e / 2}\subset\{x_{3}=0\}$ for functions $U _{t}$ and
$U_{b}$. We set
\[
s_\e=-\frac{1}{2}\ln \e,\quad \rho_\e=2 \e^{1/2} \quad \hbox{and}
\]
\[
M_k ^T(\e/2)=M_k(\e/2)-\left(X_{t,\e/2 }((s_\e, +\infty)
  \times  S^1)\cup X_{b,\e/2}((-\infty, - s_\e) \times  S^1)\cup
  X_{m}(B_{\rho_\e}(0))\right).
\]

We prove, following section 6 in~\cite{HP}, the existence of a family
of surfaces close to $M_k^T(\xi)$.  In a first step, we modify the
parametrization of the ends $E_t(\e/2), E_b(\e/2), E_m$, for
appropriates values of $s$, so that, when $r \in [3r_\e/4,3r_\e/2]$
the curves corresponding to the image of
\[
\theta \rightarrow (r\, \cos \theta, r\, \sin \theta, U_{t  } (r, \theta)),
\quad  \theta \rightarrow (r\, \cos \theta, r\, \sin \theta, U_{b  }
(r, \theta) ),
\]
\[
\theta \rightarrow (r\, \cos \theta, r\, \sin \theta, U_{m } (r,
\theta))
\]
correspond respectively to the curves $\{s = \ln (2 r)\},\{s = -\ln
(2 r)\}, \{\rho=\frac1r\}$.

The second step is the modification of the unit normal vector field on
$M_k(\e/2)$ to produce a transverse unit vector field $\tilde n_{\e/2}
$ in such a way that it coincides with the normal vector field $n
_{\e/2}$ on $M_k(\e/2) $, is equal to $e_3$ on the graph over $B_{3
  r_\e/2} - B_{3r_\e/4}$ of the functions $U_{t }$ and $U_{b }$ and
interpolate smoothly between the different definitions of $\tilde
n_{\e/2}$ in different subsets of $M_k^T (\e/2)$.

Finally we observe that close to $E_t(\e/2),$ we can give the following
estimate: 
\begin{equation}
\label{differenza1}
\left|{\cosh^2 s} \left( {\mathbb L}_{M_k(\e/2)}v-\cosh^{-2} s
\left(  \partial_{ss}^2 v +\partial_{\t\t}^2 v
 \right)  \right)\right|\leqslant c\left|\cosh^{-2}s\, v\right|.
 \end{equation}
 This follows easily from (\ref{eq.catenoide}) together with the fact
 that $w_{t,\xi}$ (see \eqref{parameterization.topend}) decays at
 least like $\cosh^{-2}s$ on $E_t(\e/2).$ Similar considerations hold
 close the bottom end $E_b(\e/2).$ Near the middle planar end $E_m,$
 we observe that the following estimate holds:
\begin{equation}
\label{differenza2}
\left||x|^{-4} \left( 
{\mathbb L}_{M_k(\e/2)}v-|x|^4 \Delta_0 v \right)\right|\leqslant c
\left||x|^{2k+3} \nabla v\right|.
\end{equation}
This follows easily from (\ref{linearizzato.planar}) together with the
fact that $u_m$ decays at least like $|x|^{k+1}$ on $E_m.$\\

The graph of a function $u$, using the vector field $\tilde n_{\e/2}$,
will be a minimal surface if and only if $u$ is a solution of a second
order nonlinear elliptic equation of the form
$$
{\mathbb L}_{M_k^T(\e/2) } \, u = \tilde L_{\e/2} \, u + Q_{ \e} \,(u)
$$
where ${\mathbb L}_{M_k^T(\e/2) }$ is the Jacobi operator about
$M_k^T(\e/2) $, $Q_{ \e}$ is a nonlinear second order differential
operator and $\tilde L_{\e/2}$ is a linear operator which takes into
account the change of the normal vector field (only for the top and
bottom ends) and the change of the parametrization.  This operator is
supported in a neighborhoods of $\{\pm s_\e\} \times  S^1$, where
its coefficients are uniformly bounded by a constant times $\e^2$, and
a neighborhood of $\{\rho_\e\} \times  S^1$ where its coefficients
are uniformly bounded by a constant times $\e^3$.\\

Now, we consider three even functions $\varphi_t, \varphi_b,\varphi_m
\in {\mathcal C}^{2, \alpha} ( S^1)$ such that $\varphi_t,
\varphi_b$ are $L^2$-orthogonal to $1$ and $\t\to\cos \t$ while
$\varphi_m$ is $L^2$-orthogonal to $1$. Assume that they satisfy
\begin{equation}
\label{stima.3.Phi.xi}
\|\varphi_t\|_{{\mathcal C}^{2, \alpha}} + \|\varphi_b\|_{{\mathcal
    C}^{2, \alpha}}+\|\varphi_m\|_{{\mathcal
    C}^{2, \alpha}} \leqslant \kappa \, \e.
\end{equation}
We set $\Phi : = (\varphi_t, \varphi_b, \varphi_m)$ and we define
$w_\Phi$ to be the function equal to:

\begin{enumerate}  
\item $\chi_+(s) \, H_{\varphi_t} (s_\e-s, \cdot)$ on the image of
  $X_{t,\e/2}$, where $\chi_+$ is a cut-off function equal to $0$ for
  $s \leqslant s_0+1$ and identically equal to $1$ for $s \in [s_0+2,
  s_\e]$,
\item $\chi_-(s) \, H _{\varphi_b} (s-s_\e, \cdot)$ on the image of
  $X_{b,\e/2}$, where $\chi_-$ is a cut-off function which equals $0$
  for $s \geqslant -s_0-1$
  and $1$ for $s \in [-s_\e, -s_0-2]$,

\item $\chi_m(r) \, \tilde H _{\rho_{\e},\varphi_m} ( \cdot , \cdot)$
  on the image of $X_{m},$ where $\chi_m$ is a cut-off function equal
  to $0$ for $r \geqslant \rho_{0} $ and to $1$ for $\rho \in
  [\rho_\e, \rho_0/2]$,

\item 0 in the remaining part of the surface $M_k^T (\e/2)$,
\end{enumerate}
where $\tilde H$ and $H$ are, respectively, harmonic extensions of the
operators introduced in Propositions~\ref{poisson.piano.interno.xi} and
\ref{poisson.catenoide.troncone.xi} of Appendix A.\\

We would like to prove that, under appropriates hypothesis, the graph
over $M_k^T (\e/2)$ of the function $u=w_\Phi +v$ is a minimal
surface. This is equivalent to solve the equation:
$$
{\mathbb L}_{M_k ^T(\e/2)} (w_\Phi + v)  = \tilde L_{\e/2} (w_\Phi + v) +
Q_{ \e} (w_\Phi + v ).
$$
The resolution of the previous equation is obtained thanks to 
the following fixed point problem:

\begin{equation}
\label{eq.fixed.point.costa.xi}
v= T(\Phi,v)
\end{equation}  
with
$$
T(\Phi,v) = {G}_{\e/2,  \delta} \circ {\mathcal E}_\e \,  \left( \gamma \left( \tilde
L_{\e/2} (w_\Phi  + v) - {\mathbb L}_{M_k^T (\e/2)} \, w_\Phi +
Q_{\e} (w_\Phi + v ) \right)\right) ,
$$
where $\delta \in (1,2), $  the operator ${G}_{\e/2,\delta}$ is 
the right inverse provided in Proposition  \ref{inverse.operator.costa} and ${\mathcal E}_\e$ 
is a linear extension operator 
$$
{\mathcal E}_\e : {\mathcal C}^{0, \alpha}_{\delta }(M_k^T (\e/2))
\longrightarrow {\mathcal C}^{0, \alpha}_{\delta } (M_k(\e/2) ) ,$$
(here ${\mathcal C}^{0, \alpha}_{\delta }( M_k^T (\e/2))$ denotes the
space of functions of ${\mathcal C}^{0, \alpha}_{\delta } (M_k
(\e/2))$ restricted to $M_k^T (\e/2)$) such that ${\mathcal E}_\e v=v$
in $M_k^T (\e/2),$ ${\mathcal E}_\e v=0$ in the image of
$[s_\e+1,+\infty) \times  S^1$ by $X_{t,\e/2},$ in the image of
$(-\infty,-s_\e-1) \times  S^1$ by $X_{b,\e/2}$ and in the image of
$B_{\rho_\e/2} \times  S^1$ by $X_{m}$, and ${\mathcal E}_\e v$ is
an interpolation of these values in the remaining part of
$M_{k}(\e/2)$:
$$({\mathcal E}_\e v)\circ X_{t,\e/2}(s,\t)=(1+s_\e-s)
(v\circ X_{t,\e/2}(s_\e,\t)),\quad \hbox{for}\quad (s,\t) \in
[s_\e,s_\e+1] \times  S^1$$
$$({\mathcal E}_\e v)\circ X_{b,\e/2}(s,\t)=(1+s_\e+s)(v\circ X_{b,\e/2}(s_\e,\t))
\quad \hbox{for} \quad (s,\t) \in [-s_\e-1,-s_\e] \times  S^1$$
$$({\mathcal E}_\e v)\circ X_{m}(\rho,\t)=
\left(\frac 2 {\rho_\e} \rho-1\right) (v\circ X_{m}(\rho_\e,\t))\quad
\hbox{for} \quad (\rho,\t) \in [\rho_\e/2, \rho_\e] \times  S^1.$$
\begin{remark}
As consequence of the properties  of ${\mathcal E}_\e,$
if $supp\, v \cap \left(B_{\rho_\e}-B_{{\rho_\e}/2 }\right)
\neq \emptyset$  then 
$$\| ({\mathcal E}_\e v) \circ X_m \|
_{{\mathcal C}^{0, \alpha}(\bar B_{\rho_0})} \leqslant c
\rho_\e^{-\a}\|v\circ X_m\| _{{\mathcal C}^{0,
    \alpha}(B_{\rho_0}-B_{\rho_\e})}.$$ This phenomenon of explosion of
the norm does not occur near the catenoidal type ends:
$$\|({\mathcal E}_\e v)\circ X_{t,\e/2}\|_{{\mathcal C}^{0, \alpha}([s_0,+\infty) \times  S^1)} \leqslant c  \| v\circ X_{t,\e/2}\|_{{\mathcal C}^{0, \alpha}([s_0,s_\e] \times  S^1)}.$$
A similar equation holds for the bottom end.\\

In the following we will assume $\a >0$ and close to zero.
\end{remark}

The existence of a solution
$v \in {\mathcal C}^{2, \alpha}_{\delta } (M_k^T (\e/2))$ for the
equation (\ref{eq.fixed.point.costa.xi}) is a consequence of the 
following result which proves that $T$ is a contracting mapping.
\begin{proposition}
\label{contrazione.costa.xi} There exist constants $c_\kappa  >0$ 
and $\e_\kappa >0$, such that
\begin{equation}
\|T(\Phi,0) \|_{{\mathcal C}^{2,\a}_{\delta}}(M_k^T (\e/2)) \leqslant c_\kappa \, \e^{3/ 2}
\label{estimate.T.xi}
\end{equation}
and, for all $\e \in (0, \e_\kappa)$ and $0<\a<\frac12$, 
\[
\|T (\Phi,v_2)-T(\Phi,v_1)\|_{{\mathcal C}^{2,\a}_{\delta }(M_k (\e/2))} 
\leqslant \frac{1}{2}
\, \| v_2-v_1\|_{{\mathcal C}^{2,\a}_{\delta }(M_k(\e/2))},
\]
\begin{equation}
\label{estimate.2fi}
\|T (\Phi_2,v)-T(\Phi_1,v)\|_{{\mathcal C}^{2,\a}_{\delta }(M_k (\e/2))} 
\leqslant c \e
\, \| \Phi_2-\Phi_1\|_{{\mathcal C}^{2,\a}(S^1)},
\end{equation}
where 
\[
\| \Phi_2-\Phi_1\|_{{\mathcal C}^{2,\a}(S^1)}=
\| \varphi_{t,2}-\varphi_{t,1}\|_{{\mathcal C}^{2,\a}(S^1)}
\]
\[
+\| \varphi_{b,2}-\varphi_{b,1}\|_{{\mathcal C}^{2,\a}(S^1)}+
\| \varphi_{m,2}-\varphi_{m,1}\|_{{\mathcal C}^{2,\a}(S^1)}
\]
for all $v,v_1,v_2 \in {\mathcal C}^{2,\a}_{\delta } (M_k^T (\e/2))$ and
satisfying $\|v\|_{{\mathcal C}^{2,\a}_{\delta }} \leqslant \, 2 \,
c_\kappa \, \e^{3/ 2}$ and for all boundary data 
$\Phi,\Phi_1,\Phi_2 \in [{\mathcal C}^{2, \alpha} ( S^1)]^3$ satisfying 
\eqref{stima.3.Phi.xi}.
\end{proposition}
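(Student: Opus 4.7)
The plan is to verify the three bounds by exploiting the uniform bound on the right inverse $G_{\e/2,\delta}$ given by Proposition~\ref{inverse.operator.costa}, together with the uniform boundedness of multiplication by $\gamma$ and of the extension operator $\mathcal{E}_\e$ (the controlled $\rho_\e^{-\a}$ blow-up near the middle end being taken into account). Granted these, it suffices to estimate the source term $\gamma\bigl(\tilde L_{\e/2}(w_\Phi+v)-\mathbb{L}_{M_k^T(\e/2)}w_\Phi+Q_\e(w_\Phi+v)\bigr)$ in $\mathcal{C}^{0,\a}_\delta(M_k^T(\e/2))$.

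For \eqref{estimate.T.xi}, set $v=0$ and decompose the source into three pieces. First, $\tilde L_{\e/2}w_\Phi$: the operator $\tilde L_{\e/2}$ is supported in thin annular neighborhoods of $\{\pm s_\e\}\times S^1$ and $\{\rho_\e\}\times S^1$, with coefficients bounded by $c\e^2$ at the catenoidal transitions and $c\e^3$ at the planar transition, while $\|w_\Phi\|=O(\kappa\e)$ there; after multiplication by the weight $e^{-\delta s_\e}=\e^{\delta/2}$ this contribution is of order $\e^{3+\delta/2}$, well below $\e^{3/2}$. Second, $\mathbb{L}_{M_k^T(\e/2)}w_\Phi$: since $H_{\varphi_t}, H_{\varphi_b}, \tilde H_{\rho_\e,\varphi_m}$ are harmonic for the respective model operators $\partial_s^2+\partial_\theta^2$ and $\Delta_0$, the only contributions come from the commutators of the cutoffs $\chi_\pm,\chi_m$ with the Jacobi operator and from the error between $\mathbb{L}_{M_k^T(\e/2)}$ and these model operators, controlled by \eqref{differenza1} and \eqref{differenza2}. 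The $L^2$-orthogonality assumptions on the $\varphi$'s, combined with the Fourier-mode decay of the model Poisson extensions, guarantee that the harmonic extensions have decayed to size $O(\e^{3/2})$ in the transition regions, which is the dominant contribution. Third, $Q_\e(w_\Phi)$: being quadratic, it is $O(\e^2)$ by $\|w_\Phi\|=O(\e)$.

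For the Lipschitz estimate in $v$, write
$$T(\Phi,v_2)-T(\Phi,v_1) = G_{\e/2,\delta}\circ\mathcal{E}_\e\bigl(\gamma\bigl(\tilde L_{\e/2}(v_2-v_1)+Q_\e(w_\Phi+v_2)-Q_\e(w_\Phi+v_1)\bigr)\bigr).$$
The $\tilde L_{\e/2}$-contribution inherits the $O(\e^2)$ smallness of its coefficients, and by the quadratic estimate (analogous to \eqref{stima.Q.catenoide}) the difference $Q_\e(w_\Phi+v_2)-Q_\e(w_\Phi+v_1)$ is bounded by $c(\|w_\Phi\|+\|v_1\|+\|v_2\|)\|v_2-v_1\|=O(\e)\|v_2-v_1\|$; both are $\leqslant \tfrac 12\|v_2-v_1\|$ for $\e$ small enough. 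For \eqref{estimate.2fi} we repeat the argument of \eqref{estimate.T.xi} with $w_\Phi$ replaced by $w_{\Phi_2}-w_{\Phi_1}$, which satisfies the analogous structural bounds but with $\e$ replaced by $\|\Phi_2-\Phi_1\|_{\mathcal{C}^{2,\a}}$; only one factor of $\e$ is then absorbed by the weighted decay, producing the linear prefactor $c\,\e$ rather than $c\,\e^{3/2}$.

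The main obstacle is the careful bookkeeping of powers of $\e$ at the three ends, complicated by the norm blow-up $\rho_\e^{-\a}$ that $\mathcal{E}_\e$ introduces near the middle planar end. The restriction $0<\a<\tfrac 12$ is precisely what allows this factor to be absorbed by the decay of $\tilde H_{\rho_\e,\varphi_m}$; the weight $\delta\in(1,2)$ and the prescribed orthogonality of the $\varphi$'s to the low Fourier modes are what make the power counting close and deliver the $\e^{3/2}$ bound.
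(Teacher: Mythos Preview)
Your proposal is correct and follows essentially the same approach as the paper: both arguments reduce to bounding the three pieces $\gamma\mathbb{L}_{M_k^T(\e/2)}w_\Phi$, $\gamma\tilde L_{\e/2}w_\Phi$ and $\gamma Q_\e(w_\Phi)$ separately, invoking the error estimates \eqref{differenza1}--\eqref{differenza2} and the harmonicity of the model extensions for the first, the smallness and localization of the coefficients of $\tilde L_{\e/2}$ for the second, and the quadratic structure for the third, and then composing with the uniformly bounded right inverse from Proposition~\ref{inverse.operator.costa}. Your explicit identification of the $\rho_\e^{-\a}$ loss from $\mathcal{E}_\e$ at the planar end, and of the decay of the harmonic extensions in the cutoff transition regions as the source of the critical $\e^{3/2}$ (respectively $\e$) power, is exactly the bookkeeping the paper sketches and leaves to the reader.
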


\begin{proof}
We recall that the Jacobi operator associated to $M_k(\e/2),$
is asymptotic to the operator of the catenoid near the 
catenoidal ends, and it is asymptotic to the  laplacian  
near of the planar end. 
The  function $w_\Phi$ is identically zero far from the ends
where the explicit expression of ${\mathbb L}_{M_k(\e/2)}$ is not known: 
this is the  reason of our particular choice in the definition of $w_\Phi.$
Then from the definition of $w_\Phi,$ thanks to Proposition 
\ref{inverse.operator.costa},  to \eqref{differenza1} and
 \eqref{differenza2}, we obtain the estimate
$$
\|  {\mathcal E}_\e \left( \gamma {\mathbb L}_{M_k^T (\e/2)} \, w_\Phi \right) 
\|_{{\mathcal C}^{0,\a}_{\delta }(M_k (\e/2))}
 = $$
$$ \leqslant c 
\left|\left| \cosh^{-2} s \, (w_\Phi \circ X_{t,\e/2}) \right|\right|
_{{\mathcal C}^{0,\a}_{\delta }([s_0+1,s_\e] \times  S^1)}+
c \left|\left| \cosh^{-2} s \,( w_\Phi \circ X_{b,\e/2}) \right|\right|
_{{\mathcal C}^{0,\a}_{\delta }([-s_\e,-s_0-1] \times  S^1)}+
$$
$$c \e^{-\frac \a 2}
 \left|\left| \rho^{2k+3}\nabla(w_\Phi \circ X_{m})\right| \right|
_{{\mathcal C}^{0,\a}([\rho_\e,\rho_0] \times  S^1 )} 
\leqslant c_\kappa \e^{3/2}. $$


\noindent
Using the properties of $\tilde L_{\e/2}$, we obtain 
\[
\|  {\mathcal E}_\e \left( \gamma \tilde L_{\e/2} \, w_\Phi \right) 
\|_{{\mathcal C}^{0,\a}_{\delta }(M_k (\e/2))}
\leqslant c \e^{} \|w_\Phi \circ X_{t,\e/2}  \|_{{\mathcal
    C}^{0,\a}_{\delta}([s_0+1,s_\e] \times  S^1)}
\]
\[
\mbox{}\hspace{1cm}+c \e^{} \|w_\Phi \circ X_{b,\e/2}  \|
_{{\mathcal C}^{0,\a}_{\delta}([-s_\e,-s_0-1] \times  S^1)} 
+ c \e^{1-\a/2} \|w_\Phi \circ X_{m}  \|_{{\mathcal C}^{0,\a}
([\rho_\e,\rho_0] \times  S^1) } 
\leqslant   c_\kappa \e^{3/2}.
\]
As for the last term, we recall that the operator $Q_\e$ has two different
expressions if we consider the catenoidal type end and the planar end 
(see \eqref{eq.catenoide} and \eqref{operator.middle.end}).
It holds that 
$$\|  {\mathcal E}_\e \left( \gamma Q_\e \left( w _\Phi \right) \right)  \|_{{\mathcal C}^{0,\a}_{\delta }(M_k (\e/2))} \leqslant  c_k\e^{3/2}.$$
\noindent
Details are left to the reader.
 \end{proof}

\begin{theorem}
\label{fixedpoint.CHM}
Let be $B:=\{w \in {\cal C}^{2,\a}_{\delta}(M_{k,\e})\,
 | \, ||w||_{2,\a} \leqslant 2 c_\kappa {\e^{3/2}}\}.$
Then the nonlinear mapping $T$ defined above has a unique fixed point $v$ in $B.$
\end{theorem}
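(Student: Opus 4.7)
The plan is to deduce Theorem~\ref{fixedpoint.CHM} as a direct application of the Banach fixed point theorem, using Proposition~\ref{contrazione.costa.xi} as the main analytic input. All the hard work (elliptic estimates, right inverse for the Jacobi operator, bounds on $\tilde L_{\e/2}$ and on the nonlinearity $Q_\e$, control of the harmonic extension $w_\Phi$) has already been absorbed into the two estimates \eqref{estimate.T.xi} and the Lipschitz bound with constant $1/2$ stated in that proposition. So the remaining step is purely formal.

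First I would verify that $T(\Phi,\cdot)$ maps the closed ball $B = \{w \in {\cal C}^{2,\a}_\delta(M_k^T(\e/2)) \mid \|w\|_{2,\a,\delta}\leqslant 2 c_\kappa \e^{3/2}\}$ into itself. For $v \in B$, the triangle inequality and the Lipschitz bound of Proposition~\ref{contrazione.costa.xi} give
\[
\|T(\Phi,v)\|_{{\cal C}^{2,\a}_\delta} \leqslant \|T(\Phi,v)-T(\Phi,0)\|_{{\cal C}^{2,\a}_\delta} + \|T(\Phi,0)\|_{{\cal C}^{2,\a}_\delta} \leqslant \tfrac{1}{2}\|v\|_{{\cal C}^{2,\a}_\delta} + c_\kappa \e^{3/2} \leqslant 2 c_\kappa \e^{3/2},
\]
provided $\e \in (0,\e_\kappa)$ and $\Phi$ satisfies \eqref{stima.3.Phi.xi}. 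Hence $T(\Phi,\cdot)(B)\subseteq B$.

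Next, the Lipschitz estimate in Proposition~\ref{contrazione.costa.xi} says precisely that $T(\Phi,\cdot)$ is a strict contraction on $B$ with Lipschitz constant $1/2 < 1$ with respect to the $\|\cdot\|_{{\cal C}^{2,\a}_\delta}$-norm. Since $B$ is a closed subset of the Banach space ${\cal C}^{2,\a}_\delta(M_k^T(\e/2))$ (it is a closed ball in a Banach space, hence complete in the induced metric), the Banach fixed point theorem applies and produces a unique $v\in B$ with $T(\Phi,v)=v$.

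There is essentially no obstacle here; the only point worth double-checking is that the extension operator ${\mathcal E}_\e$ and the right inverse $G_{\e/2,\delta}$ are both defined uniformly in $\e$, so that the constants $c_\kappa$ and $\e_\kappa$ of Proposition~\ref{contrazione.costa.xi} really are uniform in the parameters $\Phi$ satisfying \eqref{stima.3.Phi.xi}. This is guaranteed by Proposition~\ref{inverse.operator.costa} (bounded right inverse independent of $\xi$) and by the explicit construction of ${\mathcal E}_\e$. The fixed point $v$ produced this way depends smoothly (indeed continuously, and by estimate \eqref{estimate.2fi} Lipschitz-continuously) on the boundary data $\Phi$, which will be useful in the gluing step of Section~\ref{gluing}, although it is not part of the statement of Theorem~\ref{fixedpoint.CHM} itself.
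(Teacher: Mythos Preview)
Your proof is correct and follows the same approach as the paper: use Proposition~\ref{contrazione.costa.xi} to show that $T(\Phi,\cdot)$ maps the ball $B$ into itself and is a contraction there, then conclude by the contraction mapping principle. The paper's own proof is in fact terser than yours and (somewhat imprecisely) invokes the ``Sch\"auder fixed point theorem'' rather than the Banach fixed point theorem, but the argument is the same; your explicit triangle-inequality verification that $T(\Phi,\cdot)(B)\subseteq B$ is exactly the step the paper summarizes with ``This value follows from the estimate of the norm of $T(\Phi,0)$.''
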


\noindent{\bf Proof.} The previous lemma shows that, if $\e$ is  chosen small 
enough, the nonlinear mapping  $T$
is a contraction mapping from the ball $B$ of radius 
$ 2 c_\kappa \e^{3/2}$ in 
${\mathcal C}^{2,\a}_{\delta}(M_{k}^T(\e/2))$ into itself. This value follows 
from the estimate
of the norm of $T(\Phi,0).$ Consequently thanks to Sch\"auder fixed point
theorem, $T$ has a unique fixed point $w$ in this ball.
\hfill \qed \\


\noindent
This argument provides a minimal surface $M_k^T (\e/2,
\Phi)$ which is close to $M_k^T (\e/2)$ and 
has three boundaries. This surface is, close to its
upper and lower boundary, a vertical graph over the annulus $B_{r_\e} -
B_{r_\e/2}$ whose parametrization is, respectively, given by
$$
 U_{t} (r, \theta ) = \sigma_{t,\e/2} + \ln (2r) +
\frac \e 2 r \cos \t +H_{\varphi_t} (s_\e-\ln 2r, \theta) + V_{t} (r, \theta ) ,
$$
$$
 U_b (r, \theta ) = - \sigma_{{b,\e/2} } - \ln (2r) +  
\frac \e 2 r \cos \t+H_{\varphi_b} (s_\e-\ln 2r, \theta) + V_{b} (r, \theta ) ,
$$
where $  s_\e  = - \frac{1}{2}\, \ln \e .$
The boundaries of the surface correspond to $r_\e = \frac{1}{2} \, \e^{-1/2}.$
Nearby the middle boundary the surface  is a vertical graph over the annulus 
$B_{r_\e } -B_{r_\e /2},$  Its parametrization is 
$$
 U_m (r, \theta ) = \tilde H_{\rho_\e,\varphi_m} (1/r ,\theta ) +
 V_{m} (r, \theta ) ,
$$
where $\rho_\e=2 \e^{1/2}.$
All the functions $V_i$ for $i=t,b,m$ depend non linearly on $\e,\varphi.$

\begin{lemma}
The functions $V_i(\e,\varphi_i),$ for $i=t,b,$ satisfy $ 
\|V_i(\e,\varphi_i)(r_\e \cdot,\cdot) \|
_{{\mathcal C}^{2,\a}  (\bar B_2- B_{1/2})}  
\leqslant c \e$ and 
\begin{equation}
\label{stima.contrazione.costa.xi.1}
 \|V_i(\e,\varphi_{i,2})(r_\e \cdot,\cdot)-V_i(\e,\varphi_{i,1})(r_\e \cdot,\cdot) \|_{{\mathcal C}^{2,\a} 
(\bar B_2- B_{1/2})} 
\leqslant c \e^{1-\delta/2} \| \varphi_{i,2} - \varphi_{i,1} \|
_{{\mathcal C}^{2,\a}}
\end{equation}
\noindent
The function $V_m(\e,\varphi)$ satisfies 
$ \|V_m(\e,\varphi)(\rho_\e \cdot,\cdot) \|
_{{\mathcal C}^{2,\a} (\bar B_2- B_{1/2})}  
\leqslant c \e$ and 
\begin{equation}
\label{stima.contrazione.costa.xi.2}
\|V_m(\e,\varphi_{m,2})(\rho_\e \cdot,\cdot)-V_m(\e,\varphi_{m,1})(\rho_\e \cdot,\cdot) \|_{{\mathcal C}^{2,\a}
(\bar B_2- B_{1/2})} 
\leqslant c \e^{} \| \varphi_{m,2} - \varphi_{m,1} \|_{{\mathcal C}^{2,\a}} 
\end{equation}
\end{lemma}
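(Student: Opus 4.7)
The functions $V_t, V_b, V_m$ arise by comparing the vertical graph representation of the minimal surface $M_k^T(\e/2,\Phi)$ near each of its three boundaries with the explicit terms appearing in the statement. The plan is to express each $V_i$ in terms of the fixed point $v\in\mathcal{C}^{2,\a}_{\delta}(M_k^T(\e/2))$ produced by Theorem~\ref{fixedpoint.CHM}, and then to combine the bound $\|v\|_{\mathcal{C}^{2,\a}_{\delta}}\leqslant 2c_\kappa\e^{3/2}$ with the Lipschitz dependence of $T$ on $\Phi$ given by~\eqref{estimate.2fi}.

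\medskip

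First I would identify $V_t$ explicitly. On the annulus $r\in[3r_\e/4,3r_\e/2]$ the modified transverse vector field $\tilde n_{\e/2}$ coincides with $e_3$ by construction, so the normal graph of $u=w_\Phi+v$ is simultaneously a vertical graph of height function $U_t^0+u$, where $U_t^0=\sigma_{t,\e/2}+\ln(2r)+\frac{\e}{2}r\cos\t+\mathcal{O}_{\mathcal{C}^\infty_b}(\e)$ is supplied by Lemma~\ref{annular.part.xi}. Since $\chi_+\equiv 1$ in this region, $w_\Phi=H_{\varphi_t}(s_\e-\ln 2r,\t)$, and subtracting the explicit terms identifies $V_t(r,\t)=\mathcal{O}_{\mathcal{C}^\infty_b}(\e)+(v\circ X_{t,\e/2})(\ln 2r,\t)$. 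An analogous identification holds for $V_b$ by symmetry, and for $V_m$ using the middle-end parametrization $X_m$ (with an additional $\mathcal{O}(\e^{(k+1)/2})$ correction coming from Lemma~\ref{annular.part.xi}).

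\medskip

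The pointwise bounds follow by rescaling. The substitution $r=r_\e\tilde r$ sends $[r_\e/2,2r_\e]\times S^1$ onto $\bar B_2-B_{1/2}$, while the corresponding interval $[s_\e-\ln 2,s_\e+\ln 2]$ in $s=\ln 2r$ has fixed length independent of $\e$, so derivatives of $V_t(r_\e\,\cdot,\,\cdot)$ in $\tilde r$ are uniformly comparable to those of $v\circ X_{t,\e/2}$ in $s$. The definition of the weighted norm then gives
\[
\|v\circ X_{t,\e/2}\|_{\mathcal{C}^{2,\a}([s_\e-\ln 2,s_\e+\ln 2]\times S^1)}\leqslant c\,e^{\delta s_\e}\|v\|_{\mathcal{C}^{2,\a}_\delta}\leqslant c\,\e^{3/2-\delta/2},
\]
which, combined with the $\mathcal{O}(\e)$ term from Lemma~\ref{annular.part.xi}, produces the desired bound $\|V_t(r_\e\,\cdot,\,\cdot)\|_{\mathcal{C}^{2,\a}(\bar B_2-B_{1/2})}\leqslant c\,\e$ (for $\delta$ chosen sufficiently close to $1$). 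The estimate for $V_b$ is symmetric; for $V_m$ the norm on the middle end carries no weight by Definition~\ref{definition.space.xi}, so $\|v\circ X_m\|_{\mathcal{C}^{2,\a}}\leqslant c\,\e^{3/2}$ passes to $V_m$ directly and yields in fact a stronger bound than required.

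\medskip

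Finally, the Lipschitz estimates follow from the Lipschitz dependence of the fixed point on $\Phi$. Since $T(\Phi,\cdot)$ is a $1/2$-contraction in $v$ and $T(\cdot,v)$ has Lipschitz constant $c\e$ in $\Phi$ by~\eqref{estimate.2fi}, a standard fixed-point argument yields $\|v_2-v_1\|_{\mathcal{C}^{2,\a}_\delta}\leqslant c\,\e\,\|\Phi_2-\Phi_1\|_{\mathcal{C}^{2,\a}(S^1)}$. Evaluating at $s=s_\e$ multiplies this by $e^{\delta s_\e}=\e^{-\delta/2}$ on the catenoidal ends, producing the Lipschitz constant $c\,\e^{1-\delta/2}$; no such loss occurs on the middle end, so the Lipschitz constant there is $c\,\e$. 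The main technical obstacle I anticipate is the careful bookkeeping of the passage between the normal-graph formulation used in the fixed point scheme and the vertical-graph formulation of the statement, and the verification that the auxiliary terms arising from this change of representation are absorbed into the leading $c\e$ (resp.\ $c\e^{1-\delta/2}$) contributions.
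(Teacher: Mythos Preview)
Your approach is correct and coincides with the paper's own argument: identify $V_i$ with the fixed point $v$ (plus the $\mathcal{O}(\e)$ background from Lemma~\ref{annular.part.xi}), read off the unweighted norm near $s=s_\e$ via the factor $e^{\delta s_\e}=\e^{-\delta/2}$ on the catenoidal ends (no weight on the planar end), and feed in the fixed-point bound $\|v\|_{\mathcal{C}^{2,\a}_\delta}\leqslant 2c_\kappa\e^{3/2}$ together with the Lipschitz estimate~\eqref{estimate.2fi}. The paper's proof records exactly the two inequalities you derive, $\|V_i(\varphi_2)-V_i(\varphi_1)\|\leqslant c\,e^{\delta s_\e}\|T(\Phi_2,v)-T(\Phi_1,v)\|_{\mathcal{C}^{2,\a}_\delta}$ for $i=t,b$ and the unweighted analogue for $i=m$, and then invokes~\eqref{estimate.2fi}; your write-up is in fact more explicit than the paper's about the vertical-graph identification and the rescaling.
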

\begin{proof}
The wanted estimates follow from 
$$\|  V_i(\e,\varphi_2)(\cdot,\cdot)  - 
V_i (\e,\varphi_1)( \cdot,\cdot) \|_{{\mathcal C}^{2,\alpha}
( \bar B_{2r_\e} - B_{r_\e/2})}\leqslant
c e^{\delta s_\e} \| T(\Phi_2,V_i) - T(\Phi_1,V_i) 
\|_{{\mathcal C}^{2,\a}_{\delta}(E_i(\e/2))},
$$
\noindent
for $i=t,b$, together with
$$\|  V_m(\e,\varphi_2)(\cdot,\cdot)  - 
V_m(\e,\varphi_1)( \cdot,\cdot) \|_{{\mathcal C}^{2,\alpha}
( \bar B_{2\rho_\e} - B_{\rho_\e/2})}\leqslant
c \| T(\Phi_2,V_m) - T(\Phi_1,V_m) 
\|_{{\mathcal C}^{2,\a}(E_m)}
$$
and the estimate \eqref{estimate.2fi} of Proposition 
\ref{contrazione.costa.xi}.
\end{proof}

\section{An infinite family of Scherk type minimal surfaces close 
to a horizontal periodic flat annuli}
\label{strip}
This section has two purposes. The first one is to find an infinite
family of minimal surfaces close to a horizontal periodic flat annuli
$\Sigma$ with a disk $D_s$ removed.  The surfaces of this family have
two horizontal Scherk-type ends $E_1,E_2$ and will be glued on the
middle planar end of a CHM surface. We will prescribe the boundary
data $\varphi$ on $\partial D_s$.  Assume the period $T$ of $\Sigma$
points to the $x_2$-direction. Then $E_1,E_2$ have the $x_{1}$-axis as
asymptotic direction.

The second and more general purpose of this section is to show the
existence of an infinite family of minimal graphs over $\Sigma-D_s$
whose ends have slightly modified asymptotic directions.  When the
asymptotic directions are not horizontal, these surfaces are close to
half a Scherk surface, seen as a graph over $\Sigma-D_s$ (see
Figure~\ref{sketch}, above). A piece of one such surface will be glued
to the catenoidal ends of a CHM example $M_k$ and to an end of a KMR
example $M_{\s,0,0}$. We will prescribe the boundary data on $\partial
D_s$. Since we need to prescribe the flux along $\partial D_{s}$, we
will modify the asymptotic direction of the ends and we will choose
$|T|$ large.

\subsection{Scherk type ends}
We parametrize conformally the annulus $\Sigma\subset\R^3/T$ on
$\C^*$, with the notation $(x_1,x_2,x_3)=(x_1+i x_2,x_3)$, by the
mapping
 \[
 A(w)=\left(-\frac{|T|}{2\pi}\ln (w),0\right),\quad w\in\C^*. 
 \]
 The horizontal Scherk-type end $E_1$ described above can be written
 as the graph of a function $h_1\in {\cal C}^{2,\a}(B^*_{r}(0))$,
 where $B^*_{r}(0)$ is the punctured disk $B_{r}(0)-\{0\}$
 of radius $r\in(0,1)$ centered at the origin. The function $h_1(w)$
 is bounded and extends to the puncture (see \cite{HT}). The end $E_1$
 can be parameterized by
 \[
 X_1(w)=A(w)+h_1(w) e_{3}
 =\left(-\frac{|T|}{2\pi}\ln
  (w),h_1(w)\right) \in \R^3/T,\quad w\in B^*_{r}(0), 
\]
in the orthonormal frame ${\cal F}=(e_{1},e_{2},e_{3})$.
The asymptotic direction of the end is $e_{1}$.

The horizontal Scherk-type end $E_{2}$ can be similarly
parameterized in $\C- B_{r^{-1}}(0)$. Via an inversion, we can
parametrize $E_2$  by
\[
X_{2}(w)
=\left(-\frac{|T|}{2\pi}\ln (w),h_2(w)\right) \in \R^3/T,\quad w\in
B^*_{r}(0),
\]
in the frame ${\cal F}^-=(-e_{1},-e_{2},e_{3})$, where $h_2\in {\cal
  C}^{2,\a}(B^*_{r}(0))$ is a bounded function which can be extended
to the puncture.  Now the asymptotic direction of the end is $-e_{1}$.

Let us now parametrize a general Scherk-type end, non necessarily
horizontal.
Let $R_{\theta}$ denote a rotation in $\R^3/T$ by angle $\t$ about the
$x_{2}$-axis (oriented by $e_{2}$). We can paremetrize a non
necessarily horizontal Scherk-type end $\widetilde E_1$ with
asymptotic direction $\cos\t_1\, e_1+ \sin\t_1\, e_3$ and limit  normal
vector $R_{\t_1}(e_3)$, with $\t_1\in[0,\pi/2)$, by
\[
\widetilde X_1(z)=
\left(-\frac{|T|}{2\pi}\ln (z),\widetilde
  h_1(z)\right), z \in B^*_{r}(0)
\]
in the frame ${\cal F}(\theta_1)=R_{\theta_1}{\cal F}$, where
$\widetilde h_1\in {\cal C}^{2,\a}(B^*_{r}(0))$ is a bounded function
which can be extended to the origin.

Finally, a Scherk-type end $\widetilde E_{2}$ with asymptotic
direction $-\cos\t_2\, e_1+ \sin\t_2\, e_3$ and limit normal vector
$R_{-\t_2}(e_3)$, with $\t_2\in[0,\pi/2)$, can be parameterized by
\[
\widetilde X_2(z)=
\left(-\frac{|T|}{2\pi}\ln (z),\widetilde
  h_2(z)\right), z \in B^*_{r}(0)
\]
in the frame ${\cal F}^-(\theta_2)=R_{-\theta_2}{\cal F}^-$, where
$\widetilde h_2\in {\cal C}^{2,\a}(B^*_{r}(0))$ is a bounded function
which can be extended to the origin.

\subsection{Construction of the infinite families}

Given $r\in(0,1)$ and $\Theta=(\theta_{1},\theta_{2}) \in [0,\theta
_{0}]^2$, with $\theta_0>0$ small, we denote by $A_\Theta:\C^*\to\R^3/T$
the immersion obtained as smooth interpolation of
\[
A_\Theta(z)=\left\{\begin{array}{lll}
    (R_{\theta_{1}}\circ A)(z) &  \mbox{if}\ |z|<r/2, \\
    A(z)  & \mbox{if}\ r<|z|<r^{-1},\\
    (R_{-\theta_{2}}\circ A)(z) &  \mbox{if}\ |z|>2r^{-1}.
  \end{array}\right.
\]
Let $N_{\Theta}$ be the vector field obtained as smooth interpolation
of $R_{\theta_1}(e_3)$ on $\{ |z|<r/2\}$, $e_{3}$ on $\{ r<|z|<r^{-1}\}$
and $R_{-\theta_2}(e_3)$ on $\{|z|>2r^{-1}\}$.  For any $h \in
C^{2,\a}(\bar \C)$, we define the immersion
\[
X_{\Theta,h}(z)=A_{\Theta}(z)+h(z)N_{\Theta}(z),\quad z\in\C^*.
\]
The immersion $X_{\Theta,h}$ has two Scherk-type ends $E_1,E_2$ with
asymptotic directions $\cos\t_1\, e_1+ \sin\t_1\, e_3$ and
$-\cos\t_2\, e_1+ \sin\t_2\, e_3$, respectively.\\

At the end $E_1$ (resp. $E_2$), $X_{\Theta,h}(z)=A(z)+h_1(z)e_3$ in the
orthogonal frame ${\cal F}(\t_1)$ ( resp. 
$X_{\Theta,h}(z)=A(z^{-1})+h_2(z)e_3$ in the frame ${\cal F}^-(\t_2)$), with
$z\in B^*_{r}(0)$, where $h_1(z)=h(z)$ and $h_2(z)=h(z^{-1})$.
In~\cite{HT} has been proven that the mean curvature of $X_{\Theta,h}$
at $E_i$ is given in terms of the $z$-coordinate by
\[
2H=\frac{4 \pi^2 |z|^2}{|T|^2}{\rm div}_0 ( P^{-1/2} \nabla_0 h_i),
\]
where $P=1+\frac{4 \pi^2 |z|^2}{|T|^2}\|\nabla_0 h_i\|_0^2$ and the
subindex $\cdot_0$ means that the corresponding object is computed
with respect to the flat metric of the $z$-plane.  We denote by
$\lambda$ the smooth function without zeroes defined by $\lambda
(z)=\frac{|T|^2}{4 \pi^2 |z|^2}$, for $z\in B^*_{r}(0)$.  Then at
$E_{i}$ we have
\[
2\lambda H=P^{-1/2} \Delta_0 h_i- \frac{1}{2}P^{-3/2} \langle \nabla_0
P ,\nabla_0 h_i \rangle_0.
\]
So the mean curvature at the end $E_i$ vanishes if $h_i$ satisfies the
equation
\begin{equation}
\label{traizet}
\Delta_0 h-\frac{1}{2 P} \langle \nabla_0 P ,\nabla_0 h \rangle_0=0.
\end{equation}
 


\begin{definition} Given $k\in \N$ and $\alpha \in (0,1)$ we define
  $C^{k,\a}(\bar \C)$ as the space of functions $u\in
  C^{k,\a}_{loc}(\bar \C)$ such that
$$||u||_{C^{k,\a}(\bar \C)}:=[u]_{k,\a,\bar \C}<+\infty,$$
where $[u]_{k,\a,\bar \C}$ denotes the usual ${C^{k,\a}}$ H\"older
norm on $\bar \C.$
\end{definition}

Let $B_{s}$ be the disk of radius $s$ in $\C^*$ such that
$D_s=A(B_s)\subset\Sigma=\{z \in \C\,|\, -|T|<2y\leqslant |T|\}$
 is a geodesic disk centered at the origin of
$\R^3/T$.  Denote by $C^{k,\a}(\bar \C - B_{s})$ the space of
functions in $C^{k,\a}(\bar \C)$ restricted to $\bar \C - B_{s}.$
We denote
by $H(\Theta,h)$ the mean curvature of $X_{\Theta,h}$, and $\bar
H(\Theta,h)=\l H(\Theta,h)$, where $ \lambda$ is the smooth function
defined  in a neighbourhood
at each puncture by $\lambda(z)=\frac{|T|^2}{4 \pi^2 |z|^2}$.
 Lemma 4.1 of \cite{HT} shows that
\[
\bar H :\R^2\times {\cal C}^{2,\alpha}(\bar \C -B_{s}) \longrightarrow
{\cal C}^{0,\alpha}(\bar \C -B_{s})
\]
is an analytical operator.
Denote by ${\cal L}_\Theta$ the Jacobi operator about $A_{\Theta}$.
We set $\bar {\cal L}_{\Theta}=\lambda {\cal L}_{\Theta}$. 
{\begin{remark}
\label{metrica.C}
The operators $H$ and ${\cal L}_{\Theta}$ are the mean curvature
operator and the Jacobi operator with respect to 
the metric $|dz|^2$ of  $\bar \C.$ Defining  
the operators $\bar H=\lambda H$ and $\bar  {\cal L}_{\Theta}=
\lambda {\cal L}_{\Theta}$
means to consider a different metric on $\bar \C.$ 
Actually $\bar H$ and $\bar  {\cal L}_{\Theta}$ 
 are the mean curvature operator and Jacobi operator
with respect to the metric $g_{\lambda}=|dz|^2/\lambda.$ From the definition
of $\lambda,$ it follows that the volume of $\bar \C$ with 
respect this metric is finite.
\end{remark}

$\bar {\cal L}_{\Theta}$ 
is a second order linear elliptic operator satisfying 
$|\bar {\cal L}_{\Theta} u-\Delta u| \leqslant c (|\theta_{1}|+|\theta_{2}|)|u|$
and the  coefficients of  $F_{\Theta}=\Delta -\bar {\cal L}_{\Theta}$ have compact support.\\


Now we fix $s_0>0$. Given $\ve>0$ and $|T|\in [4/\sqrt{\e},+\infty)$
large enough, we  choose $s\in (0,s_0)$ such that $D_s=A(B_s)$ is
the geodesic disk of radius $1/2\sqrt \ve$ centered at the origin.

\begin{proposition}
\label{mapdelta}
There exists $\e_{0}>0$ and $\eta_{0}>0$ such that for every $\e \in
(0,\e_{0})$ and every $|T| \in (\eta_{0},+\infty)$, there exists an
operator
\[
G_{\e,|T|} : {\cal C }^{0,\alpha} (\bar \C-B_{s}) \longrightarrow
{\cal C}^{2,\alpha}(\bar \C -B_{s})
\]
such that, given $f \in {\cal C }^{0,\alpha} (\bar \C-B_{s})$, $w=
G_{\e,|T|}f$ satisfies
\[
\left\{\begin{array}{lll}
 \Delta w= f & {\rm on} &\bar \C-B_{s}\\
 w \in {\rm Span}\{1\} & {\rm on} & \partial B_{s}
\end{array}\right.
\]
and $||w||_{{\cal C}^{2,\alpha}} \leq c ||f||_{{\cal C}^{0,\alpha} }$
for some constant $c>0$ which does not depend on $\e,|T|$.
\end{proposition}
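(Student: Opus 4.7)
The plan is to construct $G_{\e,|T|}$ by exhausting $\bar{\C} - B_s$ with bounded annuli $\Omega_R = B_R \setminus B_s$ (for $R \gg s$), solving the classical Dirichlet problem $\Delta w_R = f$ with $w_R|_{\partial B_R} = 0$ and $w_R|_{\partial B_s} = c_R$ on each $\Omega_R$, and passing to the limit $R \to \infty$. The constant $c_R$ is the single degree of freedom afforded by the boundary condition $w|_{\partial B_s} \in \mathrm{Span}\{1\}$ and will be chosen so that the family $\{w_R\}$ remains uniformly bounded; a subsequential limit $w$ then solves the problem on $\bar{\C} - B_s$, with Schauder theory in a neighborhood of $\partial B_s$ together with interior estimates away from it yielding the required $C^{2,\a}$ regularity.

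The uniform estimate in $\e$ and $|T|$ rests on the conformal identification of $\bar{\C} - B_s$ with the flat annular strip $\Sigma - D_s$ via the map $A$, combined with the relation $\Delta = \lambda\, \Delta_\Sigma$. Rewriting the equation as $\Delta_\Sigma w = f/\lambda$ on $\Sigma - D_s$, the factor $\lambda$ blows up at both punctures of $\bar{\C}$, so the right-hand side decays exponentially at the two Scherk-type ends of $\Sigma$. Fourier-decomposing along the $|T|$-periodic direction of $\Sigma$, the nonzero modes satisfy second-order ODEs with exponentially decaying Green's functions and their $C^{2,\a}$ norms are controlled by $\|f\|_{C^{0,\a}}$ with constants independent of $\e$ and $|T|$. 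The zero Fourier mode reduces to a scalar $1$D Poisson equation on $\R$ with an exponentially small source, solvable up to an additive constant that is absorbed by the choice of $c_R$.

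The main obstacle is precisely this zero mode together with the requirement of uniformity as $|T| \to \infty$: on a long cylinder the $1$D Laplacian has no uniformly bounded right inverse in $L^\infty$, so the bounded solvability depends jointly on the exponential decay of $f/\lambda$ and on a careful choice of $c_R$ (for instance, as a weighted average of $w_R$ along $\partial B_R$) that matches the asymptotic behaviors at the two ends consistently. Once this is secured, the Schauder constant near $\partial B_s$ is uniform because $\partial B_s$ is a smooth circle of controlled $C^{2,\a}$ geometry, and the contribution of each Fourier mode is bounded by a universal constant times $\|f\|_{C^{0,\a}}$, giving the desired estimate $\|w\|_{C^{2,\a}} \leqslant c\|f\|_{C^{0,\a}}$ with $c$ independent of $\e \in (0,\e_0)$ and $|T| > \eta_0$.
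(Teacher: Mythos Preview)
Your approach differs substantially from the paper's. The paper argues by contradiction and compactness, working directly on $\bar{\C}-B_s$: first solve $\Delta u=f$ with $u=0$ on $\partial B_s$, then set $w=u-\int_{\bar{\C}-B_s}u\,dvol_{g_\lambda}$ (the metric $g_\lambda=|dz|^2/\lambda$ has finite volume, so this average is well defined and fixes the free constant). For the uniform estimate, suppose there is a sequence $(s_n,f_n,w_n)$ with $\sup|f_n|=1$ but $A_n:=\sup|w_n|\to\infty$; the rescalings $\tilde w_n=w_n/A_n$ subconverge to a bounded harmonic function on $\bar{\C}-B_{s_\infty}$ which is constant on the boundary circle and has zero $g_\lambda$-mean, hence is identically zero, contradicting $\sup|\tilde w_\infty|=1$. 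No Fourier decomposition is needed, and $|T|$ enters only through $s\in(0,s_0)$.

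In your Fourier route there is a gap precisely where you declare things easy. After transferring to the strip, the $n$-th mode satisfies $w_n''-(2\pi n/|T|)^2 w_n=(f/\lambda)_n$, whose Green's function decays at rate $2\pi|n|/|T|\to 0$ as $|T|\to\infty$ and has $L^1$-norm of order $(|T|/n)^2$. Thus ``exponentially decaying Green's functions'' alone does \emph{not} yield bounds independent of $|T|$; what would save the argument is that the source $f/\lambda$ carries a compensating factor of order $|T|^{-2}$ (since $\lambda\sim|T|^2/|z|^2$), but you neither state this nor track it through the summation back to a genuine $C^{2,\a}(\bar{\C})$ norm (which is a H\"older norm on the sphere, not on the strip). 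The zero-mode discussion is likewise only schematic. The paper's compactness argument sidesteps all of this by never leaving the $z$-coordinate, where the only effective parameter is $s$.
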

\begin{proof}
  Let be $u$ a solution of $\Delta u = f$ on $\bar \C -B_{s}$ with 
$u=0$ on $\partial B_{s}$.
We set $w= u-\int_{\bar \C-B_{s}}u \,dvol_{g_\lambda}$. 
We  recall that the metric in use on $\bar \C$
is given by $g_\lambda=|dz|^2/\lambda.$ 
With respect to this metric $vol(\bar \C-B_{s}) < +\infty$ 
and $\int_{\bar \C-B_{s}}u \,dvol_{g_\lambda}< \infty .$ 
So the function $w$ is well defined and 
then $\int_{\bar\C-B_{s}} w\,dvol_{g_\lambda}=0$ and 
$w \in {\rm Span}\{1\}$ on $\partial B_s$. 
If the theorem is false, there is a sequence of functions $f_{n}$, of 
solutions $w_{n}$ and of real numbers $s_{n}$ such that

$$\sup_{\bar \C -B_{s_{n}}}|f_{n}|=1, \quad 
A_{n}:= \sup _{\bar \C -B_{s_{n}}}|w_{n}| \rightarrow +\infty \hbox{ as }
 n \rightarrow +\infty,$$ where $s_{n} \in [0,s_{0}].$
Now we set $\tilde w_{n}:=w_{n}/A_{n}$. Elliptic estimates imply that 
$s_n$ and $\tilde w_{n}$ converge up to a subsequence, respectively, 
to $s_\infty \in [0,s_{0}]$ and to $\tilde w_{\infty}$ on $\bar \C - B_{s_{\infty}}.$ This function satisfies  
$$\Delta \tilde w_{\infty}=0.$$
Then $\tilde w_{\infty}={\rm const}$
on $\bar \C - B_{s_{\infty}}$  and 
$\int_{\bar \C - B_{s_{\infty}}} \tilde w_{\infty}\,dvol_{g_\lambda}=0$, a 
contradiction with $\sup \tilde |w_{\infty}|=1$.  
\end{proof}

\noindent
Now we fix $|T|\geqslant 4/\sqrt{\e}$, $\Theta \in (0,\e)^2$, 
$s_{\varepsilon}=\frac{1}{2\sqrt \e}$ and let 
$\varphi \in {\cal C}^{2,\alpha}(S^1)$ be even (or odd)
$L^2$-orthogonal to the function $z\rightarrow 1$,  with 
$\|\varphi\|_{{\cal C}^{2,\alpha}(S^1)} \leqslant \kappa \e$. 
Let $w_{\varphi}=\tilde H_{s_\e,\varphi}$ (see proposition
\ref{poisson.piano.interno.xi})
be the unique bounded harmonic extension of $\varphi.$ We would
like to solve   the minimal 
surface equation $H(\Theta, v+w_{\varphi})=0$ with
fixed boundary data $\varphi$,  prescribed asymptotic direction 
$\Theta$ and with period $|T|$. Then we have to solve the equation:
$$\Delta v= F_{\Theta} (v+w_{\varphi}) + Q_{\Theta}(v+ w_{\varphi}) $$

\noindent
with $Q_{\Theta}$ a quadratic term such that
 $|Q_{\Theta}(v_{1})-Q_{\Theta}(v_{2})|\leqslant c |v_{1}-v_{2}|^2$. The resolution
of the previous equation is obtained by showing  the  existence 
of a fixed point
$$v=S({\Theta},\varphi, v)$$
where
$$S({ \Theta},\varphi, v)=G_{\e,|T|}( F_{\Theta} (v+w_{\varphi}) + 
Q_{\Theta}(v+ w_{\varphi})). $$
 
\begin{proposition}
There exists $c_{\kappa}>0$ and $\e_{\kappa}>0$ such that for all 
$|T| \geqslant 4/\sqrt{\e}$ we have

$$||S({ \Theta},\varphi, 0)||_{{\cal C }^{2,\alpha} }
\leqslant c_{\kappa} \e ^{2}$$
and for all $\e \in (0,\e_{\kappa})$,

$$|| S({ \Theta},\varphi, v_{1}) -S({\Theta},\varphi, v_{2})||_
{{\cal C }^{2,\alpha}} \leqslant \frac12 ||v_{2}-v_{1}||_{{\cal C }^{2,\alpha} }$$
$$|| S({\Theta},\varphi_1, v) -S({ \Theta},\varphi_2, v)||_
{{\cal C }^{2,\alpha}} \leqslant c \e ||\varphi_{2}-\varphi_{1}||
_{{\cal C }^{2,\alpha} }$$
for all $v,v_{1},v_{2} \in {\cal C }^{2,\alpha}(\bar \C -B_{s})$ and satisfying 
$|| v ||_{{\cal C }^{2,\alpha}} \leqslant 2c_{\kappa} \e^2$, for all 
boundary data $\varphi, \varphi_1, \varphi_2  \perp 1$, whose norm is bounded 
by a constant $\kappa$ times $\e$
and for all  $\Theta= (\t_1,\t_2)$ such 
that $|\theta_{1}|+|\theta_{2}| \leqslant \e.$
\end{proposition}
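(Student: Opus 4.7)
The plan is to verify the three bounds directly by unpacking the definition
\[
S(\Theta,\varphi,v)=G_{\varepsilon,|T|}\bigl(F_\Theta(v+w_\varphi)+Q_\Theta(v+w_\varphi)\bigr)
\]
and using three ingredients repeatedly: the operator bound $\|G_{\varepsilon,|T|}f\|_{C^{2,\alpha}}\leqslant c\|f\|_{C^{0,\alpha}}$ from Proposition~\ref{mapdelta} (with $c$ independent of $\varepsilon$ and $|T|$); the estimate $|F_\Theta u|\leqslant c(|\theta_1|+|\theta_2|)|u|\leqslant c\varepsilon|u|$ together with the compact support of the coefficients of $F_\Theta$; and the quadratic nature of $Q_\Theta$. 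We also use that the harmonic extension operator $\varphi\mapsto w_\varphi$ provided by Proposition~\ref{poisson.piano.interno.xi} is linear and bounded, so $\|w_\varphi\|_{C^{2,\alpha}}\leqslant c\,\|\varphi\|_{C^{2,\alpha}}\leqslant c\kappa\varepsilon$.

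First I would prove the bound on $S(\Theta,\varphi,0)$. Here the right-hand side is $G_{\varepsilon,|T|}(F_\Theta(w_\varphi)+Q_\Theta(w_\varphi))$; we have $\|F_\Theta(w_\varphi)\|_{C^{0,\alpha}}\leqslant c\varepsilon\cdot c\kappa\varepsilon=c_\kappa\varepsilon^2$ and, writing $Q_\Theta(w_\varphi)=Q_\Theta(w_\varphi)-Q_\Theta(0)$ and using the quadratic estimate in the proposition, $\|Q_\Theta(w_\varphi)\|_{C^{0,\alpha}}\leqslant c\|w_\varphi\|^{2}\leqslant c_\kappa\varepsilon^{2}$. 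Applying $G_{\varepsilon,|T|}$ gives the wanted $c_\kappa\varepsilon^{2}$.

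Next, for the Lipschitz estimate in $v$, I would write
\[
S(\Theta,\varphi,v_1)-S(\Theta,\varphi,v_2)=G_{\varepsilon,|T|}\bigl(F_\Theta(v_1-v_2)+Q_\Theta(v_1+w_\varphi)-Q_\Theta(v_2+w_\varphi)\bigr).
\]
The first summand is bounded by $c\varepsilon\|v_1-v_2\|_{C^{2,\alpha}}$, and the quadratic estimate gives $\|Q_\Theta(v_1+w_\varphi)-Q_\Theta(v_2+w_\varphi)\|_{C^{0,\alpha}}\leqslant c\|v_1-v_2\|_{C^{2,\alpha}}^{2}\leqslant c\cdot 2c_\kappa\varepsilon^{2}\|v_1-v_2\|_{C^{2,\alpha}}$, using $\|v_i\|\leqslant 2c_\kappa\varepsilon^{2}$. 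Choosing $\varepsilon_\kappa$ small enough so that both constants combined are absorbed below $1/2$ (after applying $G_{\varepsilon,|T|}$) yields the contraction in $v$. The third inequality is analogous: the difference $S(\Theta,\varphi_1,v)-S(\Theta,\varphi_2,v)$ equals $G_{\varepsilon,|T|}$ applied to $F_\Theta(w_{\varphi_1}-w_{\varphi_2})+Q_\Theta(v+w_{\varphi_1})-Q_\Theta(v+w_{\varphi_2})$; the first term is bounded by $c\varepsilon\|\varphi_1-\varphi_2\|_{C^{2,\alpha}}$ by linearity of the harmonic extension, and the quadratic term is bounded by $c\|w_{\varphi_1}-w_{\varphi_2}\|^{2}\leqslant c\kappa\varepsilon\|\varphi_1-\varphi_2\|_{C^{2,\alpha}}$.

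The only delicate point is making sure that the constant in Proposition~\ref{mapdelta} really is independent of $|T|\geqslant 4/\sqrt{\varepsilon}$ and of the radius $s=s_\varepsilon$, which is built into the statement of that proposition via the weighted metric $g_\lambda$; once this is in hand, the three estimates follow by combining smallness of $F_\Theta$ (order $\varepsilon$), smallness of $w_\varphi$ (order $\varepsilon$) and the quadratic character of $Q_\Theta$, with no further global analysis needed. The main (only) obstacle is bookkeeping: checking that the $C^{0,\alpha}$ norms of $F_\Theta$ and $Q_\Theta$ applied to $w_\varphi$ (which lives on an unbounded domain) are truly controlled — this is where the compact support of the coefficients of $F_\Theta$ and the boundedness of $w_\varphi$ matter.
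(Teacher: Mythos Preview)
Your proposal is correct and follows exactly the approach the paper sketches: combine the uniform bound on $G_{\e,|T|}$ from Proposition~\ref{mapdelta}, the smallness $|F_\Theta u|\leqslant c(|\theta_1|+|\theta_2|)|u|\leqslant c\e|u|$ (with compactly supported coefficients), and the quadratic behaviour of $Q_\Theta$, together with $\|w_\varphi\|_{C^{2,\alpha}}\leqslant c\kappa\e$. In fact the paper's own proof is just a one-line reference to these three ingredients with the details left to the reader, so you have written out more than is actually given there.
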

\noindent 
{\bf Proof.} 
Using the proposition \ref{mapdelta}, the inequality 
$|\bar {\cal L}u-\Delta u| \leqslant c (|\t_1|+|\t_2|)|u|$
and the quadratic behavior of $Q_{\Theta}$ we derive the estimate 
of the proposition. The details of the proof are left to 
the reader. \hfill \qed \\

\noindent
On the set 
$B_{2s_{\e}}-B_{s_{\e}},$
the function $U=v+\tilde H_{s_\e,\varphi}$ is the solution 
of the  equation 
(\ref{traizet}).
Using the vertical translation 
$c_{0}e_{3}$ we can fix the value $c_0+\varphi$ at the boundary:
$$U=c_{0}+\tilde H_{s_\e,\varphi}+v.$$
\noindent
Using Sch\"auder estimate for the equation on a fixed bounded domain 
$$||v_{}(\varphi_{1})-v_{}(\varphi_{2})||_{{\cal C}^{2,\alpha}
(\bar \C -B_{s})} \leqslant
 c_{\kappa}\e || \varphi_{1}-\varphi_{2}||_{{\cal C}^{2,\alpha}(S^1)}.$$

\noindent
This can be done uniformly in $(\theta_{1},\theta_{2})$. 
We observe that the function $U$ grows logarithmically close 
$\partial B_{s_{\e}}.$ The hypothesis of orthogonality of $\varphi$ 
to the constant function, implies that the function 
$w_{\varphi}$ enjoys the same property and is bounded.
 It is not the case of $v$ which  can be seen as the sum  
of a bounded function which is orthogonal to the constant 
and a  function of the form
 $c \ln (r/s_\e),$ where $c=c(|T|,\t_1,\t_2),$ defined in a neighbourhood 
 of $\partial B_{s_{\e}}.$ 
We are able to determine $c$ using flux formula.
Let $\gamma_1,\gamma_2$ be two closed curves in $\bar \Sigma/T$
chosen in such a way to correspond by the conformal mapping to the 
boundaries of two circular neighbourhoods $N_1,N_2$ of the punctures 
corresponding to the ends with linear growth.
Let ${\cal S} = \bar \C-(B_{s_\e} \cup N_1 \cup N_2).$
Now we observe that, being $X$ the parametrization 
of a minimal surface,  then the following equality
holds:
$$ \int_{\cal S} \Delta X=0.$$ 
Thanks to the divergence theorem, if $\Gamma=\partial {\cal S},$ 
 then 
$$\int_{\cal S} \Delta X=\int_{\Gamma} \frac {\partial X}{\partial \eta}ds= 
\int_{\gamma_1} \frac {\partial X}{\partial \eta}ds+
\int_{\gamma_2} \frac {\partial X}{\partial \eta}ds+
\int_{\partial B_{s_\e}} \frac {\partial X}{\partial \eta}ds=0, $$
where $\eta$ denotes the conormal along $\Gamma.$
This equality implies  
$$\int_{\partial B_{s_{\e}}} \frac {\partial U}{\partial \eta}ds=
-\sin \t_1 |T| -\sin \t_2 |T|.$$
By integration we can conclude that on
$ B_{2s_\e}-B_{s_\e},$ it holds that  

$$U=-\frac{|T|}{2\pi}(\sin \t_1 +\sin \t_2) \ln (r/s_\e)+c_{0} +
w_{\varphi}+ v^{\perp}.$$
with $v^{\perp} \perp 1$. Now we choose $|T|$ such that 
$\frac{|T|}{2\pi}(\sin \t_1 +\sin \t_2)=1$.\\

\noindent 
We observe that, if $\t_2=\t_1=0,$ we can state that there exists
an infinite family of minimal surfaces which are close to the
surface $\Sigma-D_{s_\e}$. We denote by $S_m(\varphi)$ one of such surfaces.
 It can be seen as the graph about  $B_{2s_\e}-B_{s_\e}$
 of the function 
$$\bar U_m(r,\t)=c_{0}+\tilde H_{s_\e,\varphi}(r,\t)+\bar V_m$$ 
 where $V_m={\mathcal O}_{C^{2,\a}_b}(\e)$ and it satisfies
 \begin{equation}
 \label{stima.contrazione.strip}
||\bar V_{m}(\varphi_{1})-\bar V_{m}(\varphi_{2})||_{{\cal C}^{2,\alpha}
(B_{2s_\e}-B_{s_\e})} \leqslant 
 c_{\kappa}\e || \varphi_{1}-\varphi_{2}||_{{\cal C}^{2,\alpha}(S^1)},
 \end{equation}
 for $\varphi_2,\varphi_1 \in C^{2,\a}(S^1).$\\ 

\noindent
If $(\t_2,\t_1)\neq 0,$ we can state that there exists
an infinite family of minimal surfaces which are close to the
periodic Scherk type example.
After a vertical translation, they can be seen as the graph about  $B_{2s_\e}-B_{s_\e}$ of the 
function 
$$\bar U_t(r,\t)=-\ln(2r)+ c_{0}+\tilde H_{s_\e,\varphi}(r,\t)+\bar V_t$$ 
 where $\bar V_t={\mathcal O}_{C^{2,\a}_b}(\e)$ and it satisfies
 \begin{equation}
 \label{stima.contrazione.scherk}
||\bar V_{t}(\varphi_{1})-\bar V_{t}(\varphi_{2})||_{{\cal C}^{2,\alpha}
(B_{2s_\e}-B_{s_\e})} \leqslant 
 c_{\kappa}\e || \varphi_{1}-\varphi_{2}||_{{\cal C}^{2,\alpha}(S^1)},
 \end{equation}
 for $\varphi_2,\varphi_1 \in C^{2,\a}(S^1).$ \\
 
\begin{remark}
\label{simmetrie.scherk}
If the boundary data $\varphi$ is an even function, it is clear 
 the surfaces we have just described are symmetric with respect to the vertical 
plane $x_2=0.$ Instead, if the boundary data $\varphi$ is an 
odd function and $\t_1=\t_2$ the surfaces are symmetric with respect 
to the vertical plane $x_1=0.$
\end{remark}

\section{KMR examples}
\label{sec:KMR}
In this section, we briefly present the {\it KMR examples}
$M_{\s,\a,\b}$ studied in~\cite{K1,K2,MR1,Ro} (also called {\it
  toroidal halfplane layers}), which are the only properly embedded
doubly periodic minimal surfaces with genus one and finitely many
parallel (Scherk-type) ends in the quotient (see~\cite{PRT}).

For each $\s\in(0,\frac{\pi}{2})$, $\a\in[0,\frac{\pi}{2}]$ and $\b\in
[0,\frac{\pi}{2}]$ with $(\a,\b)\neq(0,\s)$, consider the rectangular
torus $\Sigma_\s=\left\{(z,w)\in\overline{\C }^2\ |\
  w^2=(z^2+\lambda^2)(z^2+\lambda^{-2})\right\}$, where
$\lambda=\lambda(\s)=\cot\frac{\s}{2}>1$. The KMR example $M_{\s,\a,\b}$ is
determined by its Gauss map $g$ and the differential of its height
function $h$, which are defined on $\Sigma_\s$ and given by:
\[ 
g(z,w)=\frac{az+b}{i(\overline{a}-\overline{b}z)} , \qquad
dh=\mu\,\frac{dz}{w},
\]
with
\begin{equation}
\begin{array}{l}
  a= a(\a,\b)=\cos\frac{\a+\b}{2}+i\cos\frac{\a-\b}{2};\\
  \\
  b= b(\a,\b)=\sin\frac{\a-\b}{2}+i\sin\frac{\a+\b}{2};\\
  \\
  \mu=\mu(\s) = \frac{\pi\csc\s}{\mbox{\roc K}(\sin^2\s)} ,\\
  \end{array}
\end{equation}
where $\mbox{\roc
  K}(m)=\int_0^{\frac{\pi}{2}}\frac{1}{\sqrt{1-m\sin^2u}}\, du$ ,
$0<m<1$, is the complete elliptic integral of first kind.  Such $\mu$
has been chosen so that the vertical part of the flux of
$M_{\s,\a,\b}$ along any horizontal level section equals $2\pi$.

\begin{remark}\label{rem1} The following statements give us a better
  understanding of the geometrical meaning of $a,b$ defined above:
\begin{itemize}
\item[(i)] $b\to 0$ if and only if $\a\to 0$ and $\b\to 0$, in which
  case $a\to 1+i$.
\item[(ii)] $|b|^2+|a|^2=2$.
\item[(iii)] When $\b=0$, then $a=(1+i) \cos\frac{\a}{2}$ and $b=(1+i)
  \sin\frac{\a}{2}$. In particular, $b={\cal O}(\a)$.
\item[(iv)] When $\a=0$, then $a=(1+i) \cos\frac{\b}{2}$ and
  $b=(-1+i) \sin\frac{\b}{2}$. In particular, $b={\cal O}(\b)$.
\item[(v)] In general,
  $\left|\frac{b}{a}\right|=\tan\frac{\varphi}{2}$, where $\varphi$ is
  the angle between the North Pole $(0,0,1)\in\esf^2$ and the pole of
  $g$ seen in $\esf^2$ via the inverse of the stereographic
  projection.
\end{itemize}
\end{remark}

The KMR example $M_{\s,\a,\b}$ can be parametrized on
$\Sigma_\s$ by the immersion $X = (X_1,X_2,X_3)=\Re\int{\cal W}$, where
${\cal W}$ is the Weierstrass form:
\[
{\cal W}=\left(\frac{1}{2}\left(\frac{1}{g}-g\right)dh,\frac{i}{2}\left(\frac{1}{g}+g\right)dh,dh\right).
\]
The ends of $M_{\s,\a,\b}$ correspond to the punctures
$\{A,A',A'',A'''\} =g^{-1}(\{ 0,\infty \})$, and the branch values of
$g$ are those with $w=0$, i.e.
\begin{equation}\label{eqbranchings1}
  \textstyle{ D=(-i\lambda,0),\ D'=(i\lambda,0),\ D''=(\frac{i}{\lambda},0),\ D'''=(-\frac{i}{\lambda},0). }
\end{equation}
Seen in $\esf^2$, these points form two pairs of antipodal points:
$D''=-D$ and $D'''=-D'$ and each KMR example can be given in terms of
the branch values of its Gauss map (see~\cite{Ro}).\\


In \cite{Ro}, it is proven that the above Weierstrass data define a
properly embedded minimal surface $M_{\s,\a,\b}$ invariant by two
independent translations: the translation by the period $T$ at its
ends, and the translation by the period $\widetilde T$ along a
homology class. Moreover, the group of isometries Iso$(M_{\s,\a,\b})$
of $M_{\s,\a,\b}$ always contains a subgroup isomorphic to $(\Z
/2\Z)^2$, with generators ${\cal D}$ (corresponding to the deck
transformation $(z,w)\mapsto(z,-w)$), which represents in $\R^3$ a
central symmetry about any of the four branch points of the Gauss map
of $M_{\s,\a,\b}$; and ${\cal F}$, which consists of a translation by
$\frac{1}{2}(T+\widetilde T)$.  In particular, the ends of
$M_{\s,\a,\b}$ are equally spaced.\\

\noindent
We are going to focus on two more symmetric subfamilies of KMR
examples: $\{M_{\s,\a,0}\ |\ 0<\s<\frac{\pi}{2},\
0\leqslant\a\leqslant\frac{\pi}{2}\}$ and $\{M_{\s,0,\b}\ |\
0<\s<\frac{\pi}{2},\ 0\leqslant\b<\s\}$ (these two families were originally
studied by Karcher~\cite{K1,K2}).
\begin{enumerate}
\item When $\a=\b=0$, $M_{\s,0,0}$ contains four straight lines
  parallel to the $x_1$-axis, and Iso$(M_{\s,0,0})$ is isomorphic to
  $(\Z /2\Z)^4$ with generators $S_1,S_2,S_3,R_D$: $S_1$ is a
  reflection symmetry in a vertical plane orthogonal to the
  $x_1$-axis; $S_2$ is a reflection symmetry across a plane orthogonal
  to the $x_2$-axis; $S_3$ is a reflection symmetry in a horizontal
  plane (these three planes can be chosen meeting at a common point
  which is not contained in the surface); and $R_D$ is the
  $\pi$-rotation around one of the four straight lines contained in
  the surface, see Figure~\ref{KMR} left. In this case,
  $T=(0,\pi\mu,0)$.

\item When $0<\a<\frac{\pi}{2}$, Iso$(M_{\s,\a,0})$ is isomorphic to
  $(\Z /2\Z)^3$, with generators ${\cal D}$, $S_2$ and $R_2$, where
  $S_2$ represents a reflection symmetry across a plane orthogonal
  to the $x_2$-axis, and $R_2$ is a $\pi$-rotation around a line
  parallel to the $x_2$-axis that cuts $M_{\s,\a,0}$ orthogonally, see
  Figure~\ref{KMR} right. Now $T=(0,\pi\mu t_\a,0)$, with
  $t_\a=\frac{\sin\s}{\sqrt{\sin^2\s \cos^2\a+\sin^2\a}}$. 

  \begin{figure}
    \begin{center}
      \epsfysize=4cm \epsffile{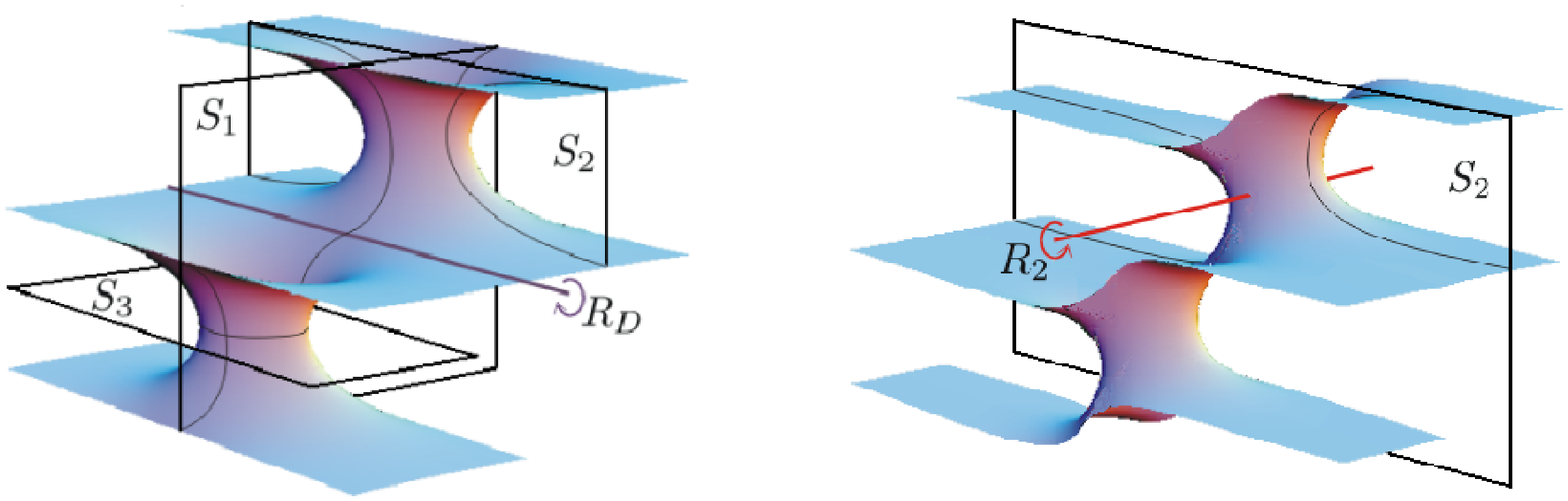}
    \end{center}\caption{Left: $M_{\s,0,0}$, with $\s=\frac{\pi}{4}$.
      Right: $M_{\s,\a,0}$ for $\s=\a=\frac{\pi}{4}$.}
    \label{KMR}
  \end{figure}

\item Suppose that $0<\b<\s$. Then $M_{\s,0,\b}$ contains four
  straight lines parallel to the $x_1$-axis, and Iso$(M_{\s,0,\b})$ is
  isomorphic to $(\Z /2\Z)^3$, with generators $S_1$, $R_1$ and $R_D$:
  $S_1$ represents a reflection symmetry across a plane orthogonal to
  the $x_1$-axis; $R_1$ corresponds to a $\pi$-rotation around a line
  parallel to the $x_1$-axis that cuts the surface orthogonally; and
  $R_D$ is the $\pi$-rotation around any one of the straight lines
  contained in the surface, see Figure~\ref{KMR1}. Moreover,
  $T=(0,\pi\mu t^\b,0)$, where
  $t^\b=\frac{\sin\s}{\sqrt{\sin^2\s-\sin^2\b}}$.
  \begin{figure}
    \begin{center}
      \epsfysize=4cm 
      \epsffile{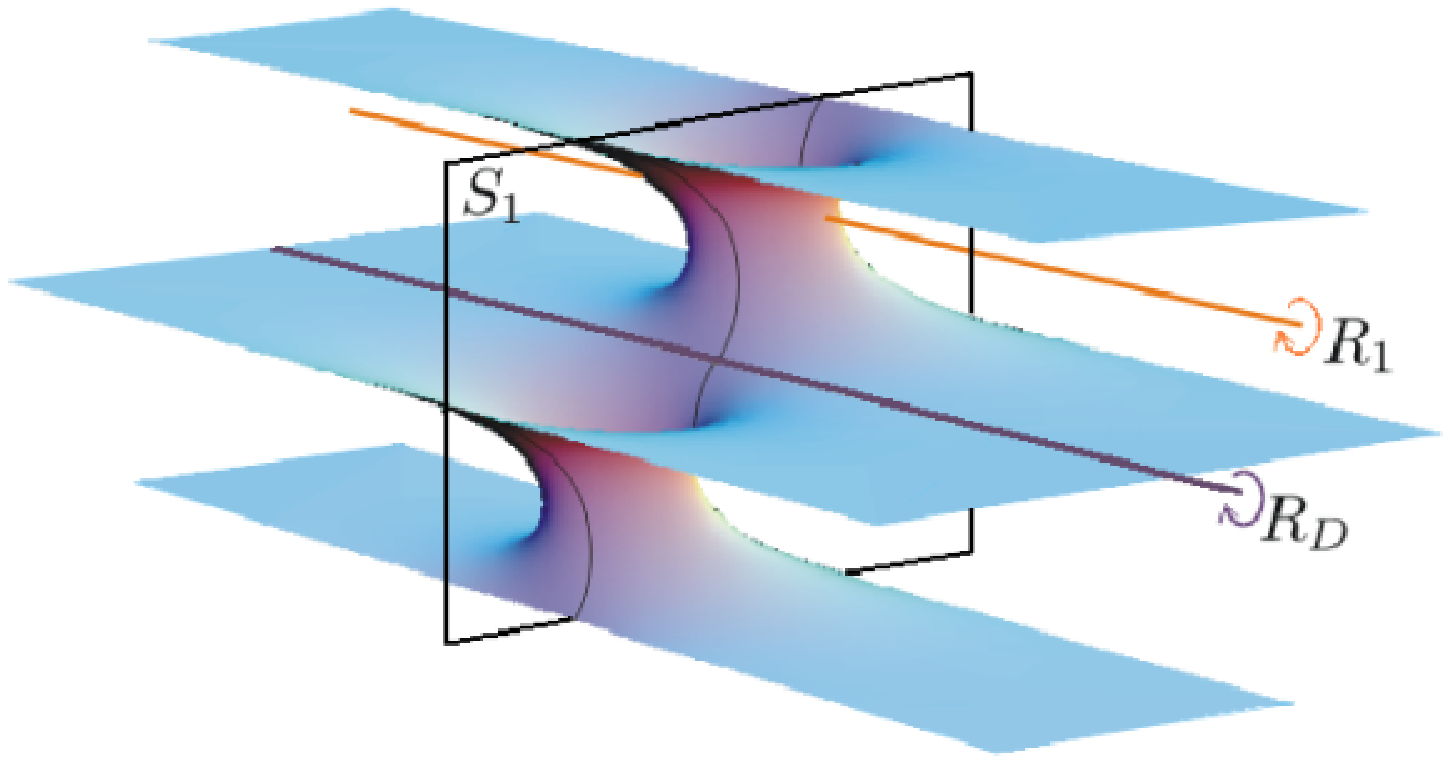}
    \end{center}
    \caption{$M_{\s,0,\b}$, where $\s=\frac{\pi}{4}$ and
      $\b=\frac{\pi}{8}$.}
\label{KMR1}
  \end{figure}

\end{enumerate}

Finally, it will be useful to see $\Sigma_\s$ as a branched
$2$-covering of $\overline\C$ through the map $(z,w)\mapsto z$. Thus
$\Sigma_\s$ can be seen as two copies $\overline{\C}_1,\overline\C_2$
of $\overline\C$ glued along two common cuts $\gamma_1,\gamma_2$,
which can be taken along the imaginary axis: $\gamma_1$ from $D$ to $D'$
and $\gamma_2$ from $D''$ to $D'''$.

\subsection{$M_{\s,\a,\b}$ as a graph over $\{x_3=0\}/T$}
\label{Msab.graph}
The KMR examples $M_{\s,\a,\b}$ converge as $(\s,\a,\b)\to(0,0,0)$ to
a vertical catenoid, since $\Sigma_\s$ converges to two pinched
spheres, $g(z)\to z$ and $dh\to\pm\frac{dz}{z}$ as
$(\s,\a,\b)\to(0,0,0)$. In fact, we can obtain two catenoids in the
limit, depending on the choice of branch for $w$ (for each copy of
$\overline\C$ in $\Sigma_\s$, we obtain one catenoid in the limit).
One of our aims along this paper consists of gluing KMR examples
$M_{\s,\a,0}$ or $M_{\s,0,\b}$ near this catenoidal limit, to a
convenient compact piece of a deformed CHM surface $M_k(\ve/2)$.  In
this subsection we express part of $M_{\s,\a,\b}$ as a vertical graph
over the $\{x_3=0\}$ plane when $\s,\a,\b$ are small.

Consider $M_{\s,\a,\b}$ near the catenoidal limit, i.e.  $\s,\a,\b$
close to zero. 
Without lost of generality, we can assume $dh\sim
\frac{dz}{z}$ in $\overline\C_1$. 
We are studying the surface in an annulus about one of its ends, say a
zero of its Gauss map.

\begin{lemma}
  \label{lemX} Consider $\a+\b+\s\leqslant\ve$ small. Up to
  translations, $M_{\s,\a,\b}$ can be parametrized in the annulus
  $\{(z,w)\in\Sigma_\s\ |\ z\in\overline\C_1,\
  |\frac{b}{a}|<|z|<\nu\}$ (for $\nu>|\frac{b}{a}|$ small) as:
  \[
  \left\{\begin{array}{l} X_1+i\, X_2=
      \frac{-1}{2}\left(z+\frac{\,1\,}{\overline{z}}\right)
      -\frac{(1+i)\overline{b}}{4\overline{z}^2}
      +{\mathcal O}(\ve z^{-1}+\ve^2 z^{-3})\\
      \\
      X_3= \ln|z|+{\mathcal O}(\ve^2 z^{-2}),
    \end{array}\right.
  \]
\end{lemma}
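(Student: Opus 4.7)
The plan is to compute $X_1+iX_2$ and $X_3$ directly from the Weierstrass data and expand in the small parameter $\ve$. The first step is to recast the parametrization in a form amenable to complex analysis. Using the identities $\phi_1+i\phi_2=-g\,dh$ and $\phi_1-i\phi_2=dh/g$, one gets, up to additive constants (which correspond exactly to the ``up to translations'' freedom of the statement),
\[
X_1+i\,X_2 \;=\; -\frac{1}{2}\int g\,dh \;+\; \frac{1}{2}\;\overline{\int \frac{dh}{g}}\,,\qquad X_3 \;=\; \Re\int dh.
\]

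Next, I expand the Weierstrass data. From $a=\cos\tfrac{\a+\b}{2}+i\cos\tfrac{\a-\b}{2}=(1+i)+{\mathcal O}(\ve^2)$ and $b=\sin\tfrac{\a-\b}{2}+i\sin\tfrac{\a+\b}{2}={\mathcal O}(\ve)$, a direct calculation gives $a\bar a+b\bar b=2$ and the partial-fraction identity
\[
\frac{1}{g(z)} \;=\; -\frac{i\bar b}{a} \;+\; \frac{2i}{a^2(z-z_*)}\,,\qquad z_*:=-b/a={\mathcal O}(\ve).
\]
Laurent-expanding $1/(z-z_*)$ in the annulus $|z_*|<|z|<\nu$, and using $2i/a^2=1+{\mathcal O}(\ve^2)$, $2ib/a^3=(1-i)b/2+{\mathcal O}(\ve^3)$, and $i\bar b/a=(1+i)\bar b/2+{\mathcal O}(\ve^3)$, one obtains
\[
\frac{1}{g}(z) \;=\; \frac{1}{z}-\frac{c_0}{z^2}-\frac{(1+i)\bar b}{2}+{\mathcal O}\!\Bigl(\frac{\ve^2}{z^3}+\frac{\ve^2}{z}+\ve^2\Bigr),\qquad c_0:=\tfrac{(1-i)b}{2}+{\mathcal O}(\ve^3),
\]
and a similar Taylor expansion $g(z)=z+c_0+\tfrac{(1+i)\bar b}{2}z^2+{\mathcal O}(\ve^2 z+\ve^3 z^3)$. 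For $dh=\mu\,dz/w$, using $\l=\cot(\s/2)\sim 2/\s$, $\mu=\pi\csc\s/\mathrm{K}(\sin^2\s)\sim 2/\s$, $\mu/\l=1+{\mathcal O}(\ve^2)$, and the branch of $w$ asymptotic to $\l z$ on $\overline\C_1$, one finds $dh=(dz/z)\bigl(1+{\mathcal O}(\ve^2/z^2+\ve^2 z^2+\ve^2)\bigr)$.

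Then I multiply out and integrate term by term. The products give
\[
\int g\,dh \;=\; z + c_0\log z + \tfrac{(1+i)\bar b}{4}z^2 + \text{error},\qquad \int \frac{dh}{g}\;=\;-\frac{1}{z}+\frac{c_0}{2z^2}-\frac{(1+i)\bar b}{2}\log z+\text{error},
\]
with errors of size ${\mathcal O}(\ve z^{\pm})$ or ${\mathcal O}(\ve^2 z^{\pm 3})$. Substituting into the formulas above, the leading terms combine to $-\tfrac{1}{2}(z+1/\bar z)$, and the $1/\bar z^2$ contribution in $\tfrac12\overline{\int dh/g}$ supplies (after the complex conjugation) precisely $-\tfrac{(1+i)\bar b}{4\bar z^2}$; the log contributions from $c_0\log z$ in $\int g\,dh$ and from $-(1+i)\bar b/2\cdot\log z$ in $\int dh/g$, when combined with their conjugates, are either absorbed in the translation ambiguity or compensated by the single-valuedness of $X$ at the regular point $z=0$. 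Finally, $X_3=\Re\int\mu\,dz/w$ reduces at leading order to $\Re\log z=\log|z|$, with the $\mu/w$ corrections integrating to terms bounded by $\ve^2 z^{-2}$.

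The main technical obstacle is uniform control of the error in the whole annulus $|z_*|<|z|<\nu$: the expansion of $w$ relies on $|z|\gg\l^{-1}$, and near the inner boundary $|z|\sim|z_*|\sim \ve$ the stated error $\ve z^{-1}+\ve^2 z^{-3}$ is of the same order as the main terms. One must therefore keep careful track of the joint dependence on $z,\l^{-1}$, and $|b/a|$ in the Laurent expansions, and verify that the logarithmic pieces are consistent with the ``up to translations'' clause (equivalently, with the period structure of the Weierstrass forms on $\Sigma_\s$ restricted to the chosen sheet $\overline\C_1$).
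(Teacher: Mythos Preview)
Your approach is essentially the same as the paper's: write $X_1+iX_2=\tfrac12\bigl(\overline{\int dh/g}-\int g\,dh\bigr)$, expand $a=(1+i)+{\mathcal O}(\ve^2)$, $b={\mathcal O}(\ve)$, $dh=(dz/z)(1+{\mathcal O}(\ve^2 z^{-2}))$, and integrate term by term. Your partial-fraction identity for $1/g$ is correct and your leading coefficients $2i/a^2=1+{\mathcal O}(\ve^2)$, $2ib/a^3=(1-i)b/2+{\mathcal O}(\ve^3)$, $i\bar b/a=(1+i)\bar b/2+{\mathcal O}(\ve^3)$ match the paper's intermediate expressions.

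The one genuine gap is your treatment of the logarithmic terms. You write that the contributions $c_0\log z$ and $-(1+i)\bar b/2\cdot\log z$ are ``either absorbed in the translation ambiguity or compensated by the single-valuedness of $X$ at the regular point $z=0$.'' Neither of these is correct: $z=0$ is an end of $M_{\s,\a,\b}$ (a zero of $g$), not a regular point, and a $\log$ is not a constant so it cannot be absorbed by translation. What actually happens is the following. The two log terms in $X_1+iX_2=\tfrac12(\overline{\int dh/g}-\int g\,dh)$ are $\tfrac{ib}{2\bar a}\log z$ and $\tfrac{ib}{2\bar a}\overline{\log z}$, which sum to $\tfrac{ib}{\bar a}\ln|z|$; this is single-valued (the $\arg z$ parts cancel automatically from the conjugate structure, which is the real content behind ``single-valuedness''). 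The coefficient $ib/\bar a$ is ${\mathcal O}(\ve)$, and on the annulus $|b/a|<|z|<\nu<1$ one has $|\ln|z||\leqslant C|z|^{-1}$ (since $t|\ln t|$ is bounded on $(0,1)$). Hence the whole term is ${\mathcal O}(\ve z^{-1})$ and sits inside the stated error. This is exactly how the paper passes from its intermediate line containing $\tfrac{ib}{\bar a}\ln|z|$ to the final expansion.

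Your final paragraph about uniformity near $|z|\sim|b/a|$ is an accurate observation about the \emph{statement} (the error there is comparable to the main terms), but it is not an obstruction to the \emph{proof}: the lemma asserts only the indicated big-${\mathcal O}$ bounds, which your termwise integration delivers once the log issue above is handled correctly.
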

{\bf Proof.}
  Recall we have assumed $dh\sim\frac{dz}{z}$ in the annulus we are working
  on.  More precisely, we have
  \[\textstyle{ dh= \frac{\mu\, dz}{\sqrt{(z^2+\lambda^2)(z^2+\lambda^{-2})}}=
    \frac{\mu}{\lambda\, \sqrt{1+\lambda^{-2}z^2+\lambda^{-2}z^{-2}+\lambda^{-4}}}\, \frac{dz}{z} .
  }\]
  Since $\frac{\mu}{\lambda}= \frac{\pi}{(1+\cos(\s)) \mbox{\roc
      K}(\sin^2\s)}= 1+{\mathcal O}(\s^4)$,
  and $\lambda=\cot\frac{\s}{2}={\cal O}(\ve)$, we get
  \[
  dh=\frac{dz}{z}(1+{\cal O}(\ve^4))(1+{\cal O}(\ve^2 z^2+\ve^2
  z^{-2}+\ve^4)) .
  \]
  Since $|z|<\nu<1$, then 
  \[
  dh=\frac{dz}{z}(1+{\mathcal O}(\ve^2 z^{-2})) .
  \]
  Fix any point $z_0\in\C_1$,
  $z_0\not\in\left\{\frac{-b}{a},\frac{\overline a}{\overline
      b}\right\}$ (which correspond to two ends of the KMR example),
  and recall that $g=\frac{az+b}{i(\overline{a}-\overline{b}z)}$.
  Straightforward computations give us, for $|b/a|<|z|<1$,
  \begin{itemize}
  \item $\int_{z_0}^z\frac{dh}{g}
    =-\frac{i\,\overline{b}}{a}\,\ln z
    -\frac{2 i}{a^2 z}
    +\frac{2 i\,b}{a^3 z^2}
    +C_1+ {\mathcal O}(\ve^2 z^{-3})$,
  \item $\int_{z_0}^z g\, dh= -\frac{i\,b}{\overline{a}}\,\ln z-
    \frac{2i}{\overline{a}^2}\,z+ C_2+
    {\mathcal O}(\ve^2 z^{-1})$,
\end{itemize}
where $C_1,C_2\in\C$ verify $\overline{C_1}-C_2=$
$\frac{1}{2}\left(z_0+\frac{\, 1\, }{\overline{z_0}}\right)+{\mathcal
    O}(\ve)$. Taking into account that $a=(1+i)+{\cal O}(\ve)$, we
  obtain
\[
\begin{array}{ll}
  X_1+i\, X_2 &
  = \frac{1}{2}\left(\overline{\int_{z_0}^z\frac{dh}{g}}- \int_{z_0}^z
    g\,dh\right)\vspace{3mm}\\

  &  = \frac{i}{\overline{a}^2}\left(z+\frac{\,1\,}{\overline{z}}\right)
  + \frac{i\,b}{\overline{a}}\ln|z|
  -\frac{i\,\overline{b}}{\overline{a}^3\overline{z}^2}
  + \frac{1}{2}\left(z_0+\frac{\, 1\, }{\overline{z_0}}\right)
  + {\mathcal O}(\ve^2 z^{-3}) \vspace{3mm}\\

  &  = \frac{-1}{2}\left(z+\frac{\,1\,}{\overline{z}}\right)
  -\frac{(1+i)\overline{b}}{4\overline{z}^2}
  + \frac{1}{2}\left(z_0+\frac{\, 1\, }{\overline{z_0}}\right)
  + {\mathcal O}(\ve z^{-1}+\ve^2 z^{-3}) .
\end{array}
\]
Similarly, $\int_{z_0}^z dh= \ln z- \ln z_0+{\mathcal O}(\ve^2 z^{-2})$, hence
\[
X_3=\Re \int_{z_0}^z dh= \ln|z|- \ln|z_0|+{\mathcal O}(\ve^2 z^{-2}),
\]
which completes the proof of lemma \ref{lemX}. \hfill \qed

By suitably translating $M_{\s,\a,\b}$, we can assume its coordinate
functions are as in Lemma~\ref{lemX}.

\begin{lemma}
  \label{lemX_3} Let $(r,\t)$ denote the polar coordinates in the
  $\{x_3=0\}$ plane. If $\a+\b+\s\leqslant\ve$ small, then a piece of
  $M_{\s,\a,\b}$  can be written as a vertical graph of
  the function
  \[
 \widetilde U(r,\t)=-\ln(2r)+r\big(\kappa_1\cos\t-\kappa_2\sin\t \big)+
 {\cal O}(\ve),
 \]
 for
 $(r,\t)\in(\frac{1}{4\sqrt{\ve}},\frac{4}{\sqrt{\ve}})\times[0,2\pi)$,
 where $\kappa_1=\Re(b)+\Im(b)$ and $\kappa_2=\Re(b)-\Im(b)$.
 We denote by $M_{\s,\a,\b}(\g,\xi)$ the KMR example $M_{\s,\a,\b}$
 dilated by $1+\g$, for $\g\leqslant 0$ small, and translated by a
 vector $\xi=(\xi_1,\xi_2,\xi_3)$. Then,
 $M_{\s,\a,\b}(\g,\xi)$ can be written as a vertical graph of
 \[
 \widetilde{ U}_{\g,\xi}(r,\t)
 =-(1+\g)\ln\frac{2r}{1+\g}+r\left(\kappa_1\cos\t-\kappa_2\sin\t\right)-
 \frac{1+\g}{r}(\xi_1\cos\t+\xi_2\sin\t)+\xi_3+{\mathcal O}(\ve).
 \]
\end{lemma}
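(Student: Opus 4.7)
The strategy is to invert the parametrization of Lemma~\ref{lemX} so as to express $X_3$ as a function of $(r,\t)$ where $re^{i\t}=X_1+iX_2$, and then transfer the result to the dilated and translated surface. In the annulus where $r\in(\tfrac{1}{4\sqrt\ve},\tfrac{4}{\sqrt\ve})$, the relation
\[
re^{i\t} = -\tfrac{1}{2}\bigl(z+\bar z^{-1}\bigr) - \tfrac{(1+i)\bar b}{4\bar z^{2}} + O(\ve z^{-1}+\ve^{2} z^{-3})
\]
forces $|z|$ to be of order $\sqrt\ve$, so the term $-\tfrac{1}{2\bar z}$ dominates and suggests the ansatz $z = z_{0} + z_{1}$ with $z_{0}:= -e^{i\t}/(2r)$ and $z_{1}$ a smaller correction to be determined.

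First I would substitute this ansatz and expand $\bar z^{-1} = -2re^{i\t} + 4r^{2}e^{2i\t}\bar z_{1} + O(|z_{1}|^{2}/|z_{0}|^{2})$. Cancellation of the leading terms produces a linear equation
\[
\tfrac{1}{2}z_{1} + 2r^{2}e^{2i\t}\bar z_{1} = -(1+i)\bar b\, r^{2} e^{2i\t} + O(\sqrt\ve),
\]
which, coupled with its complex conjugate, forms a $2\times 2$ system in $(z_{1},\bar z_{1})$ with determinant $\tfrac14-4r^{4}\neq 0$. Solving by Cramer's rule yields the dominant contribution $z_{1} = -\tfrac12(1-i)b + O(\sqrt\ve)$, which is of size $O(\ve)$ since $|b|=O(\ve)$ by Remark~\ref{rem1}. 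Substituting $z=z_{0}+z_{1}$ into $X_{3}=\ln|z|+O(\ve^{2}z^{-2})$ and using $\ln|z|=\ln|z_{0}|+\Re(z_{1}/z_{0})+O(|z_{1}/z_{0}|^{2})$ together with $z_{1}/z_{0}=-2re^{-i\t}z_{1}$ and the algebraic identity $\Re\bigl(e^{-i\t}(1-i)b\bigr)=\kappa_{1}\cos\t-\kappa_{2}\sin\t$, one obtains
\[
X_{3}= -\ln(2r) + r(\kappa_{1}\cos\t-\kappa_{2}\sin\t) + O(\ve),
\]
since the error term $O(\ve^{2}z^{-2})$ is $O(\ve)$ on $|z|\sim\sqrt\ve$, and all subdominant corrections in the inversion fit into the same remainder.

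For the dilated/translated surface, the dilated example $M_{\s,\a,\b}(\g,0)$ is the vertical graph of $(1+\g)\widetilde U\bigl(r/(1+\g),\t\bigr)$, which expands immediately to $-(1+\g)\ln\tfrac{2r}{1+\g} + r(\kappa_{1}\cos\t-\kappa_{2}\sin\t) + O(\ve)$. Adding the translation by $\xi$ means that the new graph at $(r,\t)$ equals the dilated graph evaluated at the horizontal preimage $r'=r-\xi_{1}\cos\t-\xi_{2}\sin\t+O(\xi^{2}/r)$, with $\xi_{3}$ added to the height; a first-order Taylor expansion using $\partial_{r}\widetilde U_{\g,0}(r,\t) = -(1+\g)/r + O(\ve)$ produces the correction of the form $\tfrac{1+\g}{r}(\xi_{1}\cos\t+\xi_{2}\sin\t) + \xi_{3}$ (up to sign convention). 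The main technical issue is careful order-of-magnitude bookkeeping: the regime $r\sim 1/\sqrt\ve$, $|b|=O(\ve)$ mixes several scales ($\ve^{5/2},\ve^{2},\ve^{3/2},\ve$ in the computation of $z_{1}$ alone), and one must check that no cross-term from the implicit inversion or from the Taylor step escapes the $O(\ve)$ remainder.
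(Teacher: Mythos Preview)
Your strategy is the paper's strategy: invert the parametrization of Lemma~\ref{lemX} to write $X_3$ as a function of $(r,\t)$. The organization differs. The paper works in polar form $z=|z|e^{i\psi}$, first establishes the coarse relations $r=\frac{1+\g}{2|z|}+{\cal O}(\sqrt\ve)$ and $e^{i\psi}=-e^{i\t}(1+{\cal O}(\sqrt\ve))$, then squares to obtain $\frac{1}{|z|^2}$ and hence $-\ln|z|$ to the required precision; it also builds the dilation and translation into the parametrization from the outset (equation~\eqref{eq:dilation}), so no post-processing step is needed. Your additive ansatz $z=z_0+z_1$ with $z_0=-e^{i\t}/(2r)$ and your separate treatment of dilation and translation via graph transformations are equally valid and arguably more elementary; the paper's route has the advantage that all scales are handled in one pass, whereas yours requires the extra Taylor step for the horizontal shift.

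One point in your write-up needs tightening. You state $z_1=-\tfrac12(1-i)b+{\cal O}(\sqrt\ve)$ and then assert $z_1={\cal O}(\ve)$; as written these are inconsistent, and the weaker bound ${\cal O}(\sqrt\ve)$ would be fatal, since $\Re(z_1/z_0)=-2r\,\Re(e^{-i\t}z_1)$ would then carry an ${\cal O}(1)$ error. In fact your own linear system gives better: the inverse of $\begin{pmatrix}1/2 & 2r^2e^{2i\t}\\ 2r^2e^{-2i\t} & 1/2\end{pmatrix}$ has entries of size ${\cal O}(r^{-2})={\cal O}(\ve)$, so the ${\cal O}(\sqrt\ve)$ remainder on the right-hand side contributes only ${\cal O}(\ve^{3/2})$ to $z_1$. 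With the sharper bound $z_1=-\tfrac12(1-i)b+{\cal O}(\ve^{3/2})$ the rest of your argument goes through, since then $r\cdot{\cal O}(\ve^{3/2})={\cal O}(\ve)$ and $|z_1/z_0|^2={\cal O}(\ve)$ as required.
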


\begin{remark}
Recall that $b=\sin\frac{\a-\b}{2}+i\sin\frac{\a+\b}{2}$. In
particular:
\begin{itemize}
\item When $\b=0$, we have $\kappa_1=2\sin\frac{\a}{2}$ and $\kappa_2=0$, so
   \[
 \widetilde{U}_{\g,\xi}(r,\t)
 =-(1+\g)\ln\frac{2r}{1+\g}+r\,\kappa_1\,\cos\t-
 \frac{1+\g}{r}(\xi_1\cos\t+\xi_2\sin\t)+\xi_3+{\mathcal O}(\ve).
 \]
\item When $\a=0$, $\kappa_1=0$ and $\kappa_2=2\sin\frac{\b}{2}$, so
 \[
 \widetilde{U}_{\g,\xi}(r,\t)
 =-(1+\g)\ln\frac{2r}{1+\g}-r\,\kappa_2 \,\sin\t-
 \frac{1+\g}{r}(\xi_1\cos\t+\xi_2\sin\t)+\xi_3+{\mathcal O}(\ve).
 \]
  \item When $\a=0$ (resp. $\b=0$), we will consider $\xi_1=0$ (resp.
$\xi_2=0$) in Section~\ref{family.M.abt}.
\end{itemize}
\end{remark}

\noindent
{\bf Proof.}
  Suppose $|\frac{b}{a}|<|z|<\nu$, $\nu>|\frac{b}{a}|$ small.  From
  Lemma~\ref{lemX}, we know the coordinate functions $(X_1,X_2,X_3)$
  of the perturbed KMR example $M_{\s,\a,\b}(\g,\xi)$ are given by
\begin{equation}\label{eq:dilation}
\left\{\begin{array}{l}
    X_1+i\, X_2=
      -\frac{1+\g}{2}\left(z+\frac{\,1\,}{\overline{z}}\right)
      +A(z)\\
      \\
      X_3= (1+\g)\ln|z|+\xi_3+{\mathcal O}(\ve^2 z^{-2}),
    \end{array}\right.
  \end{equation}
  
  where
  \[
  A(z)= -\frac{(1+\g)(1+i)\overline{b}}{4\overline{z}^2} +(\xi_1+i\xi_2)
  +{\mathcal O}(\ve z^{-1}+\ve^2 z^{-3})
  \]
  \[
  =- \frac{(1+\g)(\kappa_1+i\kappa_2)}{4\overline{z}^2}
  +(\xi_1+i\xi_2) +{\mathcal O}(\ve z^{-1}+\ve^2 z^{-3})
  \]
  If we set $z=|z|e^{i\psi}$ and $X_1+i X_2= r e^{i\t}$, then
  $z+\frac{1}{\overline{z}}=\left(|z|+\frac{1}{|z|}\right)e^{i\psi}$
  and
  \[
  r\cos\t=-\frac{1+\g}{2}\left(|z|+\frac{1}{|z|}\right) \cos\psi+A_1 ,
  \]
    \[
  r\sin\t=-\frac{1+\g}{2}\left(|z|+\frac{1}{|z|}\right) \sin\psi+A_2 ,
  \]
  where  $A_1=\Re(A)$ and $A_2=\Im(A)$.
  Therefore,
  \begin{equation}\label{eq:r2}
\begin{array}{ll}
  r^2 = & \displaystyle{\frac{(1+\g)^2}{4}\left(|z|+\frac{1}{|z|}\right)^2\left( 1-\frac{4
      |z|}{(1+\g)(|z|^2+1)} \left(A_1 \cos\psi+A_2\sin\psi\right)\right.}\\
  & \\
  & \displaystyle{\left. +\frac{4 |z|^2}{(1+\g)^2(|z|^2+1)^2}(A_1^2+A_2^2) \right)} .
\end{array}
\end{equation}
  When $\frac{\sqrt{\ve}}{4}\leqslant|z|\leqslant 4\sqrt{\ve}$, the functions
  $A_i$ are bounded, and we get
  \begin{equation}\label{eq:r}
  r=\frac{1+\g}{2}\left(|z|+\frac{1}{|z|}\right)(1+{\mathcal
    O}(\sqrt{\ve})) =\frac{1+\g}{2|z|}+{\cal O}(\sqrt{\ve}) .
  \end{equation}
  In particular, $r={\mathcal O}(1/\sqrt{\ve})$.  Moreover, we get
  $\frac{r}{\frac{1+\g}{2}\left(|z|+\frac{1}{|z|}\right)} =1+{\mathcal
    O}(\sqrt{\ve})$, from where
 \[
  \frac{X_1+i
    X_2}{\frac{1+\g}{2}\left(|z|+\frac{1}{|z|}\right)}=e^{i\t}(1+{\cal O}(\sqrt{\ve})).
 \]
On the other hand,
 \[
 \frac{X_1+i X_2}{\frac{1+\g}{2}\left(|z|+\frac{1}{|z|}\right)}=
 -e^{i\psi}+\frac{2 |z| A}{(1+\g)(1+|z|^2)}=
 - e^{i\psi}+{\cal O}(\sqrt{\ve}) ,
  \]
  thus $e^{i\psi}=-e^{i\t}(1+ {\mathcal O}(\sqrt{\ve}))$.
  
  From (\ref{eq:r2}) and (\ref{eq:r}) we can deduce $\frac{(1+\g)^2(1+|z|^2)^2}{4|z|^2}
  =r^2\left(1+\frac{2}{r}\left(A_1 \cos\psi+A_2\sin\psi\right) +{\cal
      O}(\ve)\right)$, from where we obtain
  \begin{equation}\label{eq:z2}
    \begin{array}{ll}
    \displaystyle{\frac{1}{|z|^2}} & 
    \displaystyle{=\left(\frac{2r}{1+\g}\right)^2\left(1+\frac{2}{r}\left(A_1 \cos\psi+A_2\sin\psi\right)
      +{\cal O}(\ve)\right)(1+{\cal O}(\ve))}\\
    & \\
    & \displaystyle{=\left(\frac{2r}{1+\g}\right)^2\left(1+\frac{2}{r}\left(A_1 \cos\psi+A_2\sin\psi\right)
      +{\cal O}(\ve)\right)},
    \end{array}
  \end{equation}
  and then
\begin{equation}\label{eq:ln}
-\ln|z|=\ln\frac{2r}{1+\g}+\frac{1}{r}(A_1\cos\psi+A_2\sin\psi)+{\mathcal O}(\ve).
\end{equation}
Finally, it is not  difficult to prove that
\[
A_1=-\frac{1+\g}{4|z|^2}\left(\kappa_1
  \cos(2\psi)-\kappa_2\sin(2\psi)\right)+\xi_1+{\cal O}(\sqrt{\ve})
\]
\[\mbox{and}\qquad
A_2=-\frac{1+\g}{4|z|^2}\left(\kappa_1\sin(2\psi)+\kappa_2
  \cos(2\psi)\right)+\xi_2+{\cal O}(\sqrt{\ve}) .
\]
Therefore,
\[
A_1\cos\psi+A_2\sin\psi
=-\frac{1+\g}{4 |z|^2}\left(\kappa_1\cos\psi-\kappa_2\sin\psi\right)+\xi_1\cos\psi+\xi_2\sin\psi+{\cal O}(\sqrt{\ve})
\]
\[
=-\frac{r^2}{1+\g}\left(\kappa_1\cos\t-\kappa_2\sin\t\right)
+\xi_1\cos\t+\xi_2\sin\t+{\mathcal O}(\sqrt{\ve}).
\] 
From here, (\ref{eq:ln}) and (\ref{eq:dilation}), Lemma \ref{lemX_3} follows.
\hfill \qed

\subsection{Parametrization of the KMR example on the cylinder}
\label{subsecLame}


In this subsection we want to parametrize the KMR example
$M_{\s,\a,\b}$ on a cylinder.
Recall its conformal compactification $\Sigma_\s$  only depends on $\s$. The parameter $\s\in(0,\frac{\pi}{2})$
will remains fixed along this subsection, and we will  omit the dependence on~$\s$
of the functions we are introducing. 
Also recall that $\Sigma_\s$ can be seen as a branched $2$-covering of
$\overline\C$, by gluing $\overline{\C}_1,\overline\C_2$ along two
common cuts $\gamma_1$ and $\gamma_2$ along the imaginary axis joining
the branch points $D,D'$ and $D'',D'''$ respectively (see
(\ref{eqbranchings1})).

We introduce the
sphero-conal coordinates $(x,y)$ on the annulus $\esf^2-(\g_1\cup\g_2)$
(see~\cite{J}): for any $(x,y) \in S^1\times (0, \pi) \equiv
[0,2\pi) \times (0, \pi)$, we define
\[
F(x,y)=\left(\cos x\, \sin y, \sin x\, m(y),l(x)\,\cos y \right)\in
\esf^2-(\g_1\cup\g_2),
\]
where
\[
m(y)=\sqrt{1-\cos^2 \s \cos^2y} \quad\mbox{and}\quad l(x)=\sqrt{1-\sin^2\s \sin^2x}.
\]

To understand the geometrical meaning of these variables, remark that
$\{x=constant\}$ and $\{y=constant\}$ correspond to two closed curves
on $\esf^2$ which are two curves are the cross sections of the sphere
and two elliptic cones (one with horizontal axis, the other one with
vertical axis) having as vertex the center of the sphere.\\

If we compose $F(x,y)$ with the stereographical projection and enlarge
the domain of definition of the function, we obtain the differentiable
map ${\bf z}$ defined on the torus
$ S^1\times S^1\equiv[0,2\pi)\times [0, 2\pi) \to\overline\C$
and given by
\begin{equation}
\label{espressione.z}
{\bf z}(x,y)=\frac{\cos x\, \sin y+i\, \sin x\, m(y)}{1-l(x)\, \cos y},
\end{equation}
which is a branch $2$-covering of $\overline\C$ with branch values in
the four points whose sphero-conal coordinates are
$(x,y)\in\left\{\pm\frac{\pi}{2}\right\}\times\{0,\pi\}$, which
correspond to $D,D',D'',D'''$.  Moreover, ${\bf z}$ maps $ S^1
\times (0,\pi)$ on $\overline\C-(\gamma_1\cup\gamma_2)$. Hence we can
parametrize the KMR example by ${\bf z}$, via its Weierstrass
data
\[ 
g({\bf z})=\frac{a{\bf z}+b}{i(\overline{a}-\overline{b}{\bf z})} , \qquad
dh=\mu\,\frac{d{\bf z}}{\sqrt{({\bf z}^2+\l^2)({\bf z}^2+\l^{-2})}},
\]

We denote by $\widetilde M_{\s,\a,\b}$ the lifting of $M_{\s,\a,\b}$
to $\R^3/T$ by forgetting its non horizontal period (i.e. its period
in homology, $\widetilde T$).  We can then parametrize $\widetilde
M_{\s,\a,\b}$ on $S^1\times \R$ by extending ${\bf z}$ to
$[0,2\pi)\times\R$ periodically. But such a parametrization is not
conformal, since the sphero-conal coordinates $(x,y)\mapsto F(x,y)$ of
the sphere are not conformal.  As the stereographic projection is a
conformal map, it suffices to find new conformal coordinates $(u,v)$
of the sphere defined on the cylinder. In particular, we look for a
change of variables $(x,y)\mapsto(u,v)$ for which $|\widetilde
F_u|=|\widetilde F_v|$ and $\langle \widetilde F_u,\widetilde
F_v\rangle=0$, where $\widetilde F(u,v)=F(x(u,v),y(u,v))$.

We observe that
\[
|F_x|=\frac{\sqrt{k(x,y)}}{l(x)}\quad \hbox{and}\quad
|F_y|=\frac{\sqrt{k(x,y)}}{m(y)},
\]
with $k(x,y)=\sin^2\s\,\cos^2x+\cos^2\s\,\sin^2y$.  Then it is natural
to consider the change of variables $(x,y)\in [0,2\pi) \times\R
\mapsto (u,v) \in [0,U_\s) \times \R$ defined by
\begin{equation}
\label{change.coordinates}
u(x)=\int_0^x \frac{1}{l(t)}dt \quad \hbox{and} 
\quad v(y)=\int_{\frac \pi 2}^y \frac{1}{m(t)}dt,
\end{equation}
where
\begin{equation}
\label{U.sigma}
\textstyle{U_\s=u(2 \pi)=\int_0^{2 \pi} \frac{dt}{\sqrt{1-\sin^2\s
      \sin^2 t}}}.
\end{equation}
Note that $U_\s$ is a function on $\s$ that 
goes to $2\pi$ as $\s$ approaches to zero, and that the above change
of variables is well defined because $\s\in(0,\frac{\pi}{2})$.

In these variables $(u,v)$, ${\bf z}$ is $v$-periodic with period
\begin{equation}
\label{V.sigma}
\textstyle{V_\s=v(2\pi)-v(0)=\int_{0}^{2\pi} \frac{dt}{\sqrt{1-\cos^2\s
      \cos^2 t}}}.
\end{equation}
The period $V_\s$ goes to $+\infty$ as $\sigma$ goes to zero (see lemma
 \ref{remark.v.epsilon}),
 which was clear taking into account the limits of $M_{\s,\a,\b}$
as $\s$ tends to zero.

From all this, we can deduce that $\widetilde M_{\s,\a,\b}$ (resp.
$M_{\s,\a,\b}$) is conformally parametrized on $(u,v)\in I_\s\times\R$
(resp. $(u,v)\in
I_\s\times J_\s $), where $I_\s=[0,U_\s]$ and $J_\s=[v(0),v(2\pi)]$.

In section \ref{Jacobi.THL}, devoted to the study of the mapping
properties of the Jacobi operator about $\widetilde M_{\s,\a,\b}$, we
will make exclusive use of the $(u,v)$ variables.
In the proof of Lemma~\ref{lemX_3} we have written as a vertical graph
an appropriate piece of $\widetilde M_{\s,\a,\b}$ corresponding to the
annulus $\frac{\sqrt{\ve}}{4}\leqslant|{\bf z}|\leqslant 4\sqrt{\ve}.$
The boundary curve along which we will glue the piece of $\widetilde
M_{\s,\a,\b}$ with the CHM surface corresponds to $|\bf z|=\sqrt \e.$

\begin{lemma}
\label{remark.v.epsilon}
If $\frac{\sqrt{\ve}}{4}\leqslant|{\bf z}|\leqslant 4\sqrt{\ve}$, then
$$ -\frac 1 2\ln \e + c_1 \leqslant v \leqslant 
-\frac 1 2\ln \e + c_2,$$
where $c_1$ and $c_2.$ Under the same assumptions 
 $V_\s=- 4\ln \e + {\mathcal
  O}(1)$. 
\end{lemma}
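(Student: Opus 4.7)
The plan is to reduce both assertions to single-variable integral estimates, with one preliminary algebraic identity doing most of the work. Starting from \eqref{espressione.z} and using $\sin^2 y=1-\cos^2 y$ together with the identity $\cos^2 x+\sin^2 x\cos^2\s=l^2(x)$, a direct computation gives the clean closed form
\[
|{\bf z}(x,y)|^2=\frac{1+l(x)\cos y}{1-l(x)\cos y}.
\]
So small $|{\bf z}|$ forces $\cos y$ near $-1$, i.e.\ $y$ close to $\pi$. Writing $y=\pi-\tau$ with $\tau>0$ small, this inverts to $\cos\tau=\frac{1}{l(x)}\cdot\frac{1-|{\bf z}|^2}{1+|{\bf z}|^2}$, from which $\tau^2=4|{\bf z}|^2+O(\s^2+|{\bf z}|^4)$ \emph{uniformly in} $x$. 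In the regime $\sqrt{\e}/4\leqslant|{\bf z}|\leqslant 4\sqrt{\e}$ with $\s\leqslant\e$, this pins $\tau$ to the interval $[\sqrt{\e}/2,8\sqrt{\e}]$ up to a $1+O(\sqrt{\e})$ factor.

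For the bound on $v$, the symmetry $m(\pi-s)=m(s)$ together with the substitution $s=\pi-t$ rewrites
\[
v(\pi-\tau)=\int_\tau^{\pi/2}\frac{ds}{\sqrt{\sin^2 s+\sin^2\s\,\cos^2 s}}.
\]
Since $s\geqslant\tau\gg\s$ along the integration domain, $\sin s\gg\sin\s$, so $m(s)=\sin s\,\bigl(1+O(\sin^2\s/\sin^2 s)\bigr)$ and the perturbative term contributes $O(1)$. The leading part evaluates to $\int_\tau^{\pi/2} ds/\sin s=-\ln\tan(\tau/2)=-\ln\tau+\ln 2+O(\tau^2)$, and combining this with $\tau\asymp 2\sqrt{\e}$ gives $v=-\tfrac12\ln\e+O(1)$ with the claimed two-sided bounds $c_1,c_2$.

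For $V_\s$, the substitution $u=\pi/2-t$ converts the defining integral into the classical complete elliptic integral of the first kind:
\[
V_\s=4\int_0^{\pi/2}\frac{du}{\sqrt{1-\cos^2\s\sin^2 u}}=4\,\mbox{\roc K}(\cos^2\s).
\]
Applying the standard asymptotic $\mbox{\roc K}(m)=-\tfrac12\ln(1-m)+2\ln 2+O\bigl((1-m)\ln(1-m)\bigr)$ as $m\to 1^-$ with $1-m=\sin^2\s$ yields $V_\s=-4\ln\sin\s+O(1)=-4\ln\s+O(1)$. Under the gluing-dictated matching $\s\asymp\e$ in force throughout this section, this reads $V_\s=-4\ln\e+\mathcal{O}(1)$.

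The only genuine obstacle, beyond routine one-variable analysis, is making the estimate $|{\bf z}|\asymp\tau/2$ in the first step uniform in the angular coordinate $x$; the closed form for $|{\bf z}|^2$ above reduces this to a trivial manipulation, so the rest of the argument is a straightforward splitting of integrals near the endpoints.
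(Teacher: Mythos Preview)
Your proof is correct and follows a genuinely different route from the paper's. The paper rewrites the integral defining $v$ as $\frac{1}{\sin\s}\mbox{\roc F}(y,m_\s)$ with the \emph{negative} parameter $m_\s=-\cot^2\s\to -\infty$, then exploits the quasi-periodicity $\mbox{\roc F}(y+k\pi,m)=\mbox{\roc F}(y,m)+2k\mbox{\roc K}(m)$ and the large-$|m|$ asymptotic $\mbox{\roc K}(m)\sim\frac{1}{\sqrt{-m}}\bigl(\ln 4+\tfrac12\ln(-m)\bigr)$; for the first step it only asserts (``it is possible to show'') that $y\in[\pi-d_1\sqrt{\e},\,\pi-d_2\sqrt{\e}]$ without giving the algebra. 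Your approach instead (i) proves the clean identity $|{\bf z}|^2=\frac{1+l(x)\cos y}{1-l(x)\cos y}$, making the $|{\bf z}|\leftrightarrow\tau$ correspondence explicit and manifestly uniform in $x$; (ii) handles the $v$-integral by a direct comparison with the elementary antiderivative of $1/\sin s$, bounding the remainder by $O(\s^2/\tau^2)=O(\e)$; and (iii) recognises $V_\s=4\mbox{\roc K}(\cos^2\s)$ with parameter tending to $1^-$, invoking the standard logarithmic asymptotic there. Your route is more elementary and self-contained, avoiding the less common negative-parameter elliptic machinery; the paper's route is more systematic in that it channels both estimates through a single special-function identity. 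You are also right to flag that the conclusion $V_\s=-4\ln\e+O(1)$ requires $\s\asymp\e$ rather than merely $\s=O(\e)$; the paper's own computation implicitly uses this two-sided comparison as well.
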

\begin{proof}
The proof is articulated in two steps.
Firstly, using equation \eqref{espressione.z}, 
 we will give the values of the $y$ variable
for which   $\frac{\sqrt{\ve}}{4}\leqslant|{\bf z}(x,y)|\leqslant 
4\sqrt{\ve}$ with $\s = {\mathcal O}(\e)$.
Secondly we will determine the values of the $v$ variable
corresponding to these values of $y.$
It is possible to show
that, if $|{\bf z}|$ varies in the interval specified above, 
then 
$\pi - a_1 \leqslant y \leqslant \pi -a_2,$
with $a_i=d_i \sqrt \e,$ where $d_1 > d_2>0$ are constants. 
Being $v$ an increasing function of $y,$ 
then $v(\pi-a_1)\leqslant v(y) \leqslant v(\pi-a_2).$
Now we will obtain the  values of $v(y)$ for $y=\pi-a_i,$ $i=1,2.$
To this aim we observe that
$$v(y)= \int_{\frac \pi 2}^{y} \frac{ds}{\sqrt{1-\cos^2 \s \cos^2 s}}.$$
It holds that
$$\int_{0}^{y}
\frac{ds}{\sqrt{1-\cos^2 \s \cos^2 s}} =
  \int_{0}^{y} \frac{ds}{\sqrt{1-\cos^2 \s+ \cos^2 \s\sin^2 s}} =$$
$$\frac{1}{\sin \s} \int_{0}^{y}
\frac{ds}{\sqrt{1+ \frac{\cos^2 \s}{\sin^2 \s}\sin^2 s}}=
\frac{1}{\sin \s} \mbox{\roc F}(y,m_\s),$$ where
 $m_\s=-\frac{\cos^2 \s}{\sin^2 \s}$ and
 $\mbox{\roc F}(y,m)=\int_{0}^y \frac{ds}{\sqrt{1-m\sin^2s}}$
 is the incomplete elliptic integral of the first kind.
 $\mbox{\roc F}(y,m)$ is an odd function with respect to $y$
  and if $k \in {\mathbb Z},$
 $$\mbox{\roc F}(y+k\pi,m)=\mbox{\roc F}(y,m)+2k\mbox{\roc K}(m), $$
 where $\mbox{\roc K}(m)=\mbox{\roc F}(\frac \pi 2 , m)$ is the
 complete elliptic integral of first kind.
  If $y=d \sqrt \e $ and $\s={\mathcal O}( \e),$ then
  $$\frac1{\sin \s}\mbox{\roc F}(y,m_\s)=-\frac 1 2 \ln \e +{\mathcal O}(1).$$
  It is known that if $m$ is sufficiently big then
 $$\mbox{\roc K}(m)=\frac{1}{\sqrt{-m}}
 \left( \ln 4+\frac 1 2 \ln(-m) \right)
 \left(1+ {\mathcal O}\left(\frac 1 m\right)\right). $$
 It follows that
$$\frac1{\sin \s} \mbox{\roc K}(m_\s)=  -  \ln  \e+{\mathcal O}(1).$$
Then, for $i=1,2,$ $$v(\pi - a_i)=\frac 1 {\sin \s} 
 \left(\mbox{\roc F}(\pi-a_i,m_\s)-\mbox{\roc K}(m_\s) \right)=$$
 $$\frac 1 {\sin \s} 
 \left(\mbox{\roc F}(-a_i,m_\s)+2\mbox{\roc K}(m_\s)-\mbox{\roc K}(m_\s) \right)=$$
$$ \frac 1 {\sin \s}\left(-\mbox{\roc F}(d_i \sqrt \e,m_\s)+
 \mbox{\roc K}(m_\s)\right)
= -\frac 1 2 \ln \e+c_i.$$
The result concerning $V_\s=v(2 \pi) - v(0)$
follows at once after having observed that 
$v(2\pi)= \frac{3\mbox{\roc K}(m_\s)}{\sin \s}$
and $v(0)=- \frac{\mbox{\roc K}(m_\s)}{\sin \s}.$
\end{proof}

From the lemma just proved it follows that
the value of the $v$ corresponding to $|\bf z|=\sqrt \e,$
is $v_\e=-\frac 1 2 \ln \e+c,$ where $c$ is a constant.   

\section{The Jacobi operator about KMR examples}
\label{Jacobi.THL}
The Jacobi operator for $M_{\s,\a,\b}$ is given by ${\cal
  J}=\Delta_{ds^2} +|A |^2,$ where $|A|^2 $ is the squared norm of the
second fundamental form on $M_{\s,\a,\b}$ and $\Delta_{ds^2}$ is the
Laplace-Beltrami operator with respect to the metric $ds^2$ induced on
the surface by the immersion $X=(X_1,X_2,X_3)$ defined in section
\ref{Msab.graph}; i.e.
 \[
 ds^2=\frac{1}{4}\left(|g|+|g|^{-1}\right)^2|dh|^2.
 \]
 We consider the metric on the torus $\Sigma_\s$ obtained as pull-back
 of standard metric $ds_0^2$ of the sphere $\esf^2$ by the Gauss map
 $N$: $dN^*(ds_0^2)=-K\; ds^2$, where $K=-\frac 1 2 |A|^2$ denotes the
 Gauss curvature of $M_{\s,\a,\b}$. Hence $\Delta_{ds^2}= -K\,
 \Delta_{ds_0^2}$, and so
 \[
 {\cal J}=-K\; ( \Delta_{ds_0^2}+2) .
 \]

 From \cite{J} and taking into account the parametrization of
 $M_{\s,\a,\b}$ on the cylinder given in subsection~\ref{subsecLame},
 we can deduce that, in the $(x,y)$-variables,
\[
 \Delta_{ds_0^2}: =\frac{l(x)\,m(y)}{k(x,y)} \, \left[  \partial_{x}\left(\frac{l(x)}
{m(y)}\,\partial_{x}\right) + \partial_{y}\left(\frac{m(y)}
{l(x)}\,\partial_{y}\right)\right] .
\]
In the $(u,v)$-variables defined by (\ref{change.coordinates}), we
have ${\cal J}=\frac{-K}{k(x(u),y(v))}{\cal L}_\s,$ where
\begin{equation}  
{\cal L}_\s : = \partial^2_{uu} +\partial_{vv}^2 +
  2\sin^2\s\, \cos^2 (x(u))+ 2\cos^2\s\, \sin^2 (y(v))
\label{equationoperator}
\end{equation}
is the Lam\'e operator~\cite{J}.

\begin{remark}
  \label{operator-sigma=0} In Proposition \ref{inverse}, we will take
  $\s \rightarrow 0$.  For such a limit, the torus $\Sigma_\s$
  degenerates into a Riemann surface with nodes consisting of two
  spheres jointed by two common points $p_0,p_1$, and the
  corresponding Jacobi operator equals the Legendre operator on
  $\esf^2\setminus \{p_0, p_1\},$ given by ${\cal
    L}_0=\partial^2_{xx}+\sin y\, \partial_{y}\left(\sin y
    \,\partial_{y}\right)+ 2\sin^2 y$ in the $(x,y)$-variables.  Note
  that when $\s=0$, the change of variables $(x,y)\mapsto(u,v)$ given
  in (\ref{change.coordinates}) is not well-defined.
\end{remark}

\subsection{The mapping properties of the Jacobi operator}
From now on, we consider the conformal parametrization of $\widetilde
M_{\s,\a,\b}$ on the cylinder $\esf^1\times\R\equiv I_\s\times\R$
described in subsection~\ref{subsecLame}.  Our aim along this
subsection is to study the mapping properties of the operator ${\cal
  J}$. It is clear that it is sufficient to study the simpler operator
${\cal L}_\s$ defined by (\ref{equationoperator}), so we will study
the problem
\[
\left\{\begin{array}{ll}
    {\cal L}_\s w=f &\mbox{ in } I_\s\times[v_0,+\infty[ \\
    w=\varphi & \mbox{ on } I_\s\times\{v_0\}
  \end{array}\right.
\]
with $v_0\in\R$ and considering convenient normed functional spaces
for $w,f,\varphi$ so that the norm of $w$ is bounded by the one of
$f$.

We will work in two different functional spaces to solve the above
Dirichlet problem.  To explain the reason, it is convenient to recall
that the isometry group of $\widetilde M_{\s,\a,\b}$ depends on the
values of the three parameters $\s,\a,\b$. When $\a=0$ (resp. $\b=0$),
$\widetilde M_{\s,\a,\b}$ is invariant by the reflection symmetry
about the $\{x_1=0\}$ plane (resp. $\{x_2=0\}$ plane). We are
interested in showing the existence of families of minimal surfaces
close to $\widetilde M_{\s,0,\b}$ and $\widetilde M_{\s,\a,0}$,
keeping the same properties of symmetry.  Thus the surfaces in the
family about $\widetilde M_{\s,0,\b}$ (resp. $\widetilde M_{\s,\a,0}$)
will be defined as normal graphs of functions  defined in
$I_\s\times\R$ which are even (resp. odd) in the first variable.  We
will solve the above Dirichlet problem in the first case. The second
one follows similarly.

\begin{definition}
  \label{definition.space2} Given $\s\in(0,\pi/2)$, $\ell\in\N$,
  $\a\in(0,1)$, $\mu\in\R$ and an interval $I$, we define ${\cal
    C}^{\ell,\a}_{\mu} (I_\s\times I)$ as the space of functions $w=
  w(u,v)$ in ${\cal C}^{\ell,\a}_{loc}(I_\s\times I)$ which are even
  and $U_\s$-periodic in the variable $u$ and for which the following
  norm is finite:
\[
\|w \|_{{\cal C}^{\ell,\a}_\mu(I_\s\times I)}:= \sup_{v \in I} e^{-\mu
  v} \| w \|_{{\cal C}^{\ell,\a} (I_\s \times [v,v+1])}
\]
\end{definition}
It is possible to prove that $U_\s \to 2\pi$ as $\s \to 0$.
We observe that the Jacobi operator ${\cal L}_\s$ becomes a Fredholm
operator when restricted to the functional spaces defined in
\ref{definition.space2}. Moreover, ${\cal L}_\s$ has separated
variables. Then we consider the operator
\[
L_\s=\partial_{uu}^2+ 2\sin^2\s\cos^2(x(u)),
\]
defined on the space of $U_\s$-periodic and even functions in $I_\s$.
This operator $L_\s$ is uniformly elliptic and self-adjoint. In
particular, $L_\s$ has discrete spectrum $(\lambda_{\s,
  i})_{i\geqslant 0}$, that we assume arranged so that $\lambda_{\s,
  i} < \lambda_{\s,i+1}$ for every $i$. Each eigenvalue
$\lambda_{\s,i}$ is simple because we only consider even functions. We
denote by $e_{\s,i}$ the even eigenfunction associated to
$\lambda_{\s, i}$, normalized so that
\[
\int_0^{U_\s} (e_{\s,i}(u))^2 \, du =1.
\]

\begin{lemma}
\label{eigenvalue} For every $i\geqslant 0$, the eigenvalue
$\lambda_{\s,i}$ of the operator $L_\s$ and its associated
eigenfunctions $e_{\s,i}$ satisfy
\begin{equation}
  -2\, \sin^2\s\leqslant\lambda_{\s,i}- i^2 \leqslant 0 , \qquad
  |e_{\s,i}- e_{0,i}|_{{\mathcal C}^2}\leqslant c_i\, \sin^2\s ,
\label{estimateeigenvalue}
\end{equation}
where $e_{0, i}(u):=\cos(i x(u))$  
for every $u\in I_\s$, and the constant $c_i>0$ depends only on
$i$ (it does not depend on $\s$).
\end{lemma}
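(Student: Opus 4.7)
The lemma has two claims — an eigenvalue estimate and a $C^2$-closeness of eigenfunctions — and I would prove them in that order, both by treating $L_\s$ as a perturbation of size $\sin^2\s$ of the flat operator $\partial_{uu}^2$.

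\emph{Eigenvalues.} The perturbing potential $V_\s(u) := 2\sin^2\s\cos^2(x(u))$ satisfies $0 \leq V_\s \leq 2\sin^2\s$, so on the space of $U_\s$-periodic even functions
\begin{equation*}
  \langle \partial_{uu}^2\phi, \phi\rangle \;\leq\; \langle L_\s\phi, \phi\rangle \;\leq\; \langle \partial_{uu}^2\phi, \phi\rangle + 2\sin^2\s\,\|\phi\|^2 .
\end{equation*}
The Courant--Fischer min--max characterization (applied with the appropriate sign convention on $\lambda_{\s,i}$ so that the ordering in the statement is consistent) then places the $i$-th eigenvalue of $L_\s$ within $2\sin^2\s$ of that of $\partial_{uu}^2$. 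The latter on $U_\s$-periodic even functions is $-(2\pi i/U_\s)^2$; the elementary expansion $U_\s = 2\pi + O(\sin^2\s)$, obtained from $l(x)^{-1} = 1 + O(\sin^2\s)$ in the defining integral, places this at $-i^2 + O(\sin^2\s)$. Combining the two inequalities yields the two-sided bound in \eqref{estimateeigenvalue}.

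\emph{Eigenfunctions.} The candidate quasi-mode is $e_{0,i}(u) := \cos(i\,x(u))$; it is even and $U_\s$-periodic thanks to $x(-u) = -x(u)$ and $x(u+U_\s) = x(u)+2\pi$. Using $\partial_u = l(x)\partial_x$ with $l(x)^2 = 1 - \sin^2\s\sin^2 x$, a direct computation yields
\begin{equation*}
  L_\s e_{0,i} = -i^2 e_{0,i} + \sin^2\s\, R_i, \qquad \|R_i\|_{C^\ell} \leq C_i ,
\end{equation*}
so that the eigenvalue bound just proved gives $(L_\s - \lambda_{\s,i})\,e_{0,i} = O_{C^\ell}(\sin^2\s)$. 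Decompose $e_{0,i} = c(\s)\,e_{\s,i} + w$ with $w \perp e_{\s,i}$ in $L^2$. Simplicity of $\lambda_{\s,i}$ and the uniform spectral gap (also a consequence of the first part, since consecutive eigenvalues differ by at least $2i+1 - 4\sin^2\s$) make $L_\s - \lambda_{\s,i}$ invertible on $\{e_{\s,i}\}^\perp$ with operator norm bounded by a constant depending on $i$ only. Projecting the quasi-mode equation onto $\{e_{\s,i}\}^\perp$ and inverting gives $\|w\|_{L^2} \leq c_i\sin^2\s$, and Schauder bootstrap for the one-dimensional ODE upgrades this to $\|w\|_{C^2} \leq c_i\sin^2\s$. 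The normalization $c(\s) = 1 + O(\sin^2\s)$ then transfers the estimate to $\|e_{\s,i} - e_{0,i}\|_{C^2} \leq c_i\sin^2\s$.

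\emph{Main obstacle.} The delicate point is the $\s$-dependence of the domain $I_\s = [0,U_\s]$, which blocks the immediate application of off-the-shelf regular perturbation theory in a fixed Hilbert space. I would handle this by pulling everything back to the fixed interval $[0,2\pi]$ via the change $u \mapsto x$: under this change, $L_\s$ becomes a Sturm--Liouville operator with smooth $\s$-dependent coefficients on a fixed domain, where classical analytic perturbation results apply cleanly; the uniformly bi-Lipschitz nature of the change of variables then transports all estimates back to the $u$-coordinate.
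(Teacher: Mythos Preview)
Your approach matches the paper's: min--max for the eigenvalue bound and standard perturbation theory (the paper simply cites Kato) for the eigenfunctions; your quasi-mode/spectral-gap argument, together with the pull-back $u\mapsto x$ to a fixed interval, is a correct way to unpack the latter.

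One point to flag: as written, your eigenvalue argument yields only $\lambda_{\s,i}-i^2=O_i(\sin^2\s)$, not the sharp window $[-2\sin^2\s,0]$ claimed in \eqref{estimateeigenvalue}. The upper bound is fine, since $U_\s\geq 2\pi$ gives $\lambda_{\s,i}\leq (2\pi i/U_\s)^2\leq i^2$. For the lower bound, however, min--max against $\partial_{uu}^2$ gives $\lambda_{\s,i}\geq (2\pi i/U_\s)^2-2\sin^2\s$, and since $(2\pi i/U_\s)^2<i^2$ this does \emph{not} imply $\lambda_{\s,i}\geq i^2-2\sin^2\s$; feeding in $U_\s=2\pi+O(\sin^2\s)$ you only get $\lambda_{\s,i}\geq i^2-(i^2+2)\sin^2\s$. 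The paper's one-line proof is the same comparison and is equally terse on this point. Only the $O(\sin^2\s)$ bound is actually used downstream (to force $P_{\s,i}<0$ for $i\geq 2$ when $\s$ is small), so the discrepancy is cosmetic, but you should not claim the sharp constant from this argument.
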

\noindent
\begin{proof}
  The bound for $\lambda_{\s,i}- i^2$ comes from the variational
  characterization of the eigenvalues
  \[
  \lambda_{\s, i} = \sup_{\hbox{\tiny codim} \, E = i} \, \inf_{\tiny \begin{array}{c}e \in E\\ 
    ||e||_{L^2}=1\end{array}} \int_0^{U_\s} \left( (\partial_u e)^2 - 2 \sin^2\s
    \cos^2 (x(u)) \, e^2 \right) \, du ,
  \]
  where $E$ is a subset of the space of $U_\s$-periodic and even
  functions in $L^2(I_\s)$, since it always holds $0\leqslant
  2\sin^2\s \cos^2(x(u))\leqslant 2\sin^2\s$.  The bound for the
  eigenfunctions follows from standard perturbation theory~\cite{K}.
\end{proof}

The Hilbert basis $\{e_{\s,i}\}_{i\in\N}$ of the space of
$U_\s$-periodic and even functions in $L^2(I_\s)$ introduced above
induces the following Fourier decomposition of functions $g= g(u,v)$
in $L^2(I_\s\times\R)$ which are $U_\s$-periodic and even in the
variable $u$,
\[
g(u,v)=\sum_{i\geqslant 0}g_i(v)\, e_{\s,i}(u).
\]
From this, we deduce that the operator ${\cal L}_\s$, can be
decomposed as ${{\cal L}_\s=\sum_{i\geqslant 0}L_{\s,i}}$, being
\[
L_{\s,i} = \partial_{vv}^2 + 2 \cos ^2 \s \sin^2 (y(v))-
\lambda_{\s,i}\, ,\quad\mbox{for every } i\geqslant 0.
\]
\noindent
Since $0 \leqslant 2 \cos^2 \s \sin^2 (y(v)) \leqslant 2\cos^2
\s=2-2\sin^2\s$, Lemma \ref{eigenvalue} gives us
\begin{equation}
  \label{potential}
P_{\s,i}:=2 \cos^2\s\, \sin^2(y(v))-\lambda_{\s,i}\leqslant 2- i^2.
\end{equation}
This fact allows us to prove the following lemma, which assures that
${\cal L}_\s$ is injective when restricted to the set of functions
that are, in the variable $u$, even and $L^2$-orthogonal to $e_{\s,
  0}$ and $e_{\s, 1}$.

\begin{lemma}
\label{injectivity} Given $v_0< v_1$, let $w$ be a solution of
${\cal L}_{\s}w=0$ on $I_\s\times[v_0, v_1]$ such that
\begin{itemize}
  \item[(i)] $w(\cdot\,,v_0)= w(\cdot\,,v_1)=0$
 \item [(ii)] $\int_{0}^{U_\s} w(u,v) e_{\s,i}(u)\, du =0$ , for
   every $v\in[v_0, v_1]$ and $i\in\{0,1\} $
 \end{itemize}
 Then $w=0$.
\end{lemma}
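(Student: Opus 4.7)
The plan is to Fourier expand $w$ in the $u$-variable with respect to the basis $\{e_{\s,i}\}_{i\in\N}$ provided just above the statement, and then show that each coefficient vanishes by a Sturm--Liouville argument exploiting the estimate \eqref{potential}.

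First I would write
\[
w(u,v)=\sum_{i\geqslant 0} w_i(v)\, e_{\s,i}(u),\qquad w_i(v)=\int_0^{U_\s} w(u,v)\, e_{\s,i}(u)\, du.
\]
The orthogonality hypothesis (ii) says precisely that $w_0\equiv w_1\equiv 0$ on $[v_0,v_1]$, so the sum effectively runs over $i\geqslant 2$. Because ${\cal L}_\s$ decomposes as $\sum_{i\geqslant 0} L_{\s,i}$ with separated variables, and because $\{e_{\s,i}\}$ is a Hilbert basis, the equation ${\cal L}_\s w=0$ is equivalent to $L_{\s,i}w_i=0$ for every $i$, i.e.\
\[
w_i''(v)+P_{\s,i}(v)\,w_i(v)=0 \quad\text{on } [v_0,v_1],\qquad w_i(v_0)=w_i(v_1)=0,
\]
where the Dirichlet conditions come from hypothesis (i).

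Next I would exploit the pointwise bound \eqref{potential}: for every $i\geqslant 2$ one has $P_{\s,i}\leqslant 2-i^2\leqslant -2<0$ on $[v_0,v_1]$. Multiplying the ODE by $w_i$, integrating on $[v_0,v_1]$ and integrating by parts (the boundary terms drop out thanks to the Dirichlet conditions) yields
\[
\int_{v_0}^{v_1}\bigl(w_i'(v)\bigr)^2\, dv \;=\; \int_{v_0}^{v_1} P_{\s,i}(v)\,w_i(v)^2\, dv \;\leqslant\; -2\int_{v_0}^{v_1} w_i(v)^2\, dv.
\]
Since the left-hand side is non-negative and the right-hand side non-positive, both integrals vanish, so $w_i\equiv 0$ for every $i\geqslant 2$. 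Combined with $w_0\equiv w_1\equiv 0$, this forces $w\equiv 0$.

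The main (indeed, the only delicate) point is the sign of $P_{\s,i}$: the spectral estimate of Lemma~\ref{eigenvalue} is used precisely to guarantee $\lambda_{\s,i}\geqslant i^2-2\sin^2\s$, which combined with $2\cos^2\s\,\sin^2(y(v))\leqslant 2\cos^2\s$ gives $P_{\s,i}\leqslant 2-i^2$; this is exactly what is needed for $i\geqslant 2$. The low modes $i=0,1$, for which $P_{\s,i}$ can be positive and for which the argument above would fail, are the ones killed by the orthogonality assumption (ii), which explains why that hypothesis is imposed.
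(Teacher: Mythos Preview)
Your proof is correct and follows essentially the same approach as the paper: Fourier decompose in $u$, use (ii) to kill the modes $i=0,1$, and then use the negativity of $P_{\s,i}$ for $i\geqslant 2$ together with the Dirichlet conditions from (i) to force each remaining $w_i$ to vanish. The only minor difference is that the paper phrases the last step as an application of the maximum principle for the operator $L_{\s,i}$ (strictly negative potential), whereas you carry out the equivalent energy estimate by multiplying by $w_i$ and integrating by parts; both arguments are standard and interchangeable here.
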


\begin{proof}
  By {\it(ii)}, $w=\sum_{i\geqslant 2} w_i(v)\,e_{\s,i}(u).$
Since the potential $P_{\s,i}$
  of the operator $L_{\s ,i}$ is negative for every $i\geqslant 2$
  (see~(\ref{potential})) and the operator $L_{\s,i}$ is
  elliptic, the maximum principle holds.  We can then conclude the
  Lemma \ref{injectivity} from {\it (i)}.
\end{proof}

To study the mapping properties of the Jacobi operator ${\cal L}_\s$,
we need to give a description of the Jacobi fields associated to
$M_{\s,\a,0}$, which are defined as the solutions of ${\cal L}_\s
v=0.$

Since $M_{\s,\a,0}$ is invariant by the reflection symmetry across the
plane $\{x_2=0\}$, there are only four independent Jacobi fields:

\begin{itemize}
\item Two Jacobi fields induced by vertical translations and by
  horizontal translations in the $x_1$-direction.  These Jacobi fields
  are clearly periodic and hence bounded.
\item A third Jacobi field generated by the 1-parameter group of
  dilations, which is not bounded (it grows linearly).
\item The last Jacobi field can be obtained by considering the
  1-parameter family of minimal surfaces induced by changing the
  parameter $\s$. This Jacobi field is not periodic and grows
  linearly.
\end{itemize}
The Jacobi fields induced by translation along the $x_3$-axis and by
dilatation, are solutions of ${\cal L}_\sigma u=0$ that are collinear
to the eigenfunction $e_{\sigma,0}$. The Jacobi fields induced by the
horizontal translation and by the variation of the parameter $\sigma$,
are collinear to $e_{\sigma,1}$.

The Jacobi fields of $M_{\s,0,\b},$ which is invariant by the 
reflection symmetry across the plane $\{x_1=0\}$ are the 
same as  $M_{\s,\a,0},$ with the exception of that one
induced by horizontal translations in the $x_1$-direction
which is to be replaced by the field induced
by horizontal translations in the $x_2$-direction. 

The following proposition states that, if the parameter $\mu$ is
appropriately chosen, there exists a right inverse for ${\cal L}_\s$
whose norm is uniformly bounded.
   
\begin{proposition}
  \label{inverse} Given $\mu\in(-2,-1),$ there exists
  $\s_0\in(0,\pi/2)$ so that, for every $\s \in (0, \s_0)$ and
  $v_0\in\R$, there exists an operator
  \[
  \begin{array}{cccc}
    G_{\s,v_0}: & {\cal C}^{0,\a}_{\mu}(I_\s\times [v_0,+ \infty)) &
    \longrightarrow & {\cal C}^{2,\a}_{\mu}(I_\s\times [v_0,+ \infty))\\
    \end{array}
  \]
  such that for all $f\in {\cal C}^{0,\a}_{\mu}(I_\s\times [v_0,+
  \infty))$, the function $w:= G_{\s,v_0}(f)$ solves
  \begin{equation}
  \left\{\begin{array}{l}
  {\cal L}_\s\,w=f  \quad \mbox{, on } I_\s\times[v_0,+\infty)\\
  w\in\mbox{{\rm Span}}\{e_{\s,0},e_{\s,1}\} \mbox{, on }  I_\s\times\{v_0\}.
   \end{array}\right.
  \end{equation}
    Moreover  
    $||w||_{{\cal C}^{2,\a}_{\mu}}\leqslant c \,||f||_{{\cal
        C}^{0,\a}_{\mu}}$, for some constant $c>0$ which does not
    depend on $\s \in (0, \s_0)$ nor $v_0 \in \R.$
\end{proposition}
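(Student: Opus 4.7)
The plan is to reduce the PDE $\mathcal L_\sigma w=f$ to a countable family of decoupled ODEs in $v$ by Fourier-expanding in the Hilbert basis $\{e_{\sigma,i}\}_{i\geqslant 0}$ of even $U_\sigma$-periodic functions on $I_\sigma$. Writing $f(u,v)=\sum_{i\geqslant 0}f_i(v)\,e_{\sigma,i}(u)$ and looking for $w(u,v)=\sum_{i\geqslant 0}w_i(v)\,e_{\sigma,i}(u)$, the equation $\mathcal L_\sigma w=f$ becomes
\[
L_{\sigma,i}\,w_i=f_i, \qquad L_{\sigma,i}=\partial_{vv}^2+P_{\sigma,i}(v), \qquad i\geqslant 0,
\]
on $[v_0,+\infty)$, where $P_{\sigma,i}(v)=2\cos^2\sigma\,\sin^2(y(v))-\lambda_{\sigma,i}$. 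The prescribed boundary condition $w(\cdot,v_0)\in\mathrm{Span}\{e_{\sigma,0},e_{\sigma,1}\}$ translates into $w_i(v_0)=0$ for every $i\geqslant 2$ and no constraint for $i=0,1$.

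\textbf{High modes ($i\geqslant 2$).} Here the key point is that estimate \eqref{potential} combined with Lemma \ref{eigenvalue} gives $P_{\sigma,i}(v)\leqslant 2-i^2<0$ once $\sigma$ is small, so $L_{\sigma,i}$ is a coercive Sturm--Liouville operator on $[v_0,+\infty)$. I would solve the Dirichlet problem $L_{\sigma,i}w_i=f_i$, $w_i(v_0)=0$, $w_i\to 0$ at $+\infty$, by constructing $w_i$ via the decaying and growing Jacobi solutions of $L_{\sigma,i}u=0$ (which exist by a Floquet/maximum-principle argument: the positive operator $-L_{\sigma,i}$ admits an exponentially decaying solution $u_i^-$ and one $u_i^+$ growing at $+\infty$). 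Variation of parameters gives an explicit inverse; the maximum principle (which is valid because $P_{\sigma,i}<0$) combined with Lemma \ref{injectivity} yields uniqueness, and the uniform weighted estimate $\|w_i\|_{C^{2,\alpha}_\mu}\leqslant c\,i^{-2}\|f_i\|_{C^{0,\alpha}_\mu}$ follows from the comparison of the decay rate $\sqrt{i^2-2}$ of $u_i^-$ with $\mu\in(-2,-1)$: for $i\geqslant 2$, $\sqrt{i^2-2}\geqslant\sqrt{2}>|\mu|$, so the Green's kernel is uniformly bounded in the weighted norm.

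\textbf{Low modes ($i=0,1$).} This is the delicate case, since $L_{\sigma,0}$ and $L_{\sigma,1}$ have 2-dimensional kernels spanned by the bounded and linearly-growing Jacobi fields identified in the list preceding the proposition (vertical translation / dilation for $i=0$; horizontal translation / variation of $\sigma$ for $i=1$). Here I would proceed in two steps. First, use variation of parameters with these two explicit Jacobi fields to produce a particular solution $\widetilde w_i$ of $L_{\sigma,i}\widetilde w_i=f_i$ on $[v_0,+\infty)$; because of the linear growth of one Jacobi field, $\widetilde w_i$ generically fails to decay. Second, use the two free parameters provided by the boundary condition (which allows $w_i(v_0)$ to take any value, i.e. adds a two-dimensional freedom $c_0\,u_i^{\mathrm{bdd}}+c_1\,u_i^{\mathrm{lin}}$) to subtract off the non-decaying asymptotic profile of $\widetilde w_i$. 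The requirement $\mu\in(-2,-1)$ is exactly what guarantees that both the bounded and linearly-growing Jacobi fields lie \emph{outside} the target weighted space, so the two constants $c_0,c_1$ are uniquely determined by the decay condition at $+\infty$, giving a well-defined right inverse whose norm is uniformly bounded in $\sigma$ and $v_0$.

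\textbf{Conclusion and regularity.} Summing the contributions and using the Parseval-type orthogonality of the $e_{\sigma,i}$'s together with the $i^{-2}$ gain for $i\geqslant 2$, one obtains an $L^2$-bound $\|w\|_{L^2_\mu}\leqslant c\,\|f\|_{L^2_\mu}$. Standard interior Schauder estimates for the uniformly elliptic operator $\mathcal L_\sigma$ (whose coefficients are uniformly bounded in $C^{0,\alpha}$ independently of $\sigma\in(0,\sigma_0)$) upgrade this to the $C^{2,\alpha}_\mu$-estimate, with boundary Schauder estimates handling the layer near $v=v_0$ (the prescribed boundary data lies in the finite-dimensional space $\mathrm{Span}\{e_{\sigma,0},e_{\sigma,1}\}$, which is smooth). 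The main obstacle I anticipate is the careful bookkeeping in step two above, namely showing that the linear map from Jacobi-field coefficients to the asymptotic profile of the solution is uniformly invertible as $\sigma\to 0$; this requires a quantitative Floquet-type analysis of $L_{\sigma,i}$ for $i=0,1$ built on the convergence $e_{\sigma,i}\to e_{0,i}$ from Lemma \ref{eigenvalue} and the explicit knowledge of the Jacobi fields of the limiting (catenoidal) surface.
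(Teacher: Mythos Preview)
Your decomposition into Fourier modes and the split into high ($i\geqslant 2$) and low ($i=0,1$) modes matches the paper's organization, but the execution differs substantially and your high-mode argument contains a concrete error.

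For the high modes you claim the barrier argument closes because ``for $i\geqslant 2$, $\sqrt{i^2-2}\geqslant\sqrt{2}>|\mu|$''. This is false: since $\mu\in(-2,-1)$ we have $|\mu|\in(1,2)$, so $\sqrt{2}>|\mu|$ fails whenever $\mu\in(-2,-\sqrt{2}\,]$. The crude upper bound $P_{\sigma,i}\leqslant 2-i^2$ from \eqref{potential} is not sharp enough at $i=2$ to control the boundary-layer contribution $u_i^-$ in the weighted norm for the full stated range of $\mu$. The paper avoids this entirely by a contradiction/blow-up argument: one solves first on finite cylinders $I_\sigma\times[v_0,v_1]$, assumes the uniform weighted estimate fails along a sequence with $\sigma_n\to 0$, rescales, and passes to the limit ${\cal L}_0\tilde w_\infty=0$. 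The explicit limiting solutions are the associated Legendre functions $Q_1^j(\cos y(v))$, which grow like $e^{2j|v|}$ (much faster than your $e^{\sqrt{j^2-2}\,|v|}$), and this gives the contradiction for every $\mu\in(-2,-1)$. The contradiction scheme delivers both the sharp decay rate and the uniformity in $\sigma$ simultaneously; your constructive approach would have to obtain these separately via a quantitative Floquet analysis of $L_{\sigma,i}$ with period $V_\sigma\to+\infty$.

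For the low modes your idea (correct a particular solution by the two Jacobi fields to enforce decay) is equivalent in spirit to the paper's construction, but the paper implements it more cleanly: extend $f_j$ to $(-\infty,v_1]$, solve the backward Cauchy problem $L_{\sigma,j}w_j=f_j$ with $w_j(v_1)=\partial_v w_j(v_1)=0$, and again run a blow-up argument against the Legendre solutions $P_1^j,Q_1^j$ for $j=0,1$. This sidesteps precisely the ``Jacobi-coefficients-to-asymptotic-profile'' inversion that you correctly identify as the delicate point in your scheme. One minor slip: the freedom at $v=v_0$ for $i=0,1$ is not ``two free parameters provided by the boundary condition''; rather there is no constraint at all at $v_0$ for these modes, so the full two-dimensional homogeneous kernel is available, and both constants are fixed by the decay requirement at $+\infty$.

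Finally, the paper works in weighted sup/H\"older norms throughout. Your detour through an $L^2_\mu$ bound and then Schauder is not wrong in principle, but the Schauder step needs a $C^0_\mu$ (not merely $L^2_\mu$) bound on $w$, so a pointwise argument is still required somewhere.
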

\begin{proof}
Every $f\in{\cal C}^{0,\a}_{\mu}(I_\s\times [v_0,+\infty))$ can be
decomposed as
\[
f= f_0 \, e_{\s, 0} + f_1 \, e_{\s,1}+\bar f,
\]
where $\bar f (\cdot, v)$ is $L^2$-orthogonal to $e_{\s, 0}$
and to $e_{\s, 1}$, for each $v\in[v_0,+\infty)$.\\

\noindent{\bf Step 1.} Firstly, let's prove Proposition \ref{inverse}
for functions $ \bar f\in{\cal C}^{0,\a}_{\mu}(I_\s\times [v_0,+
\infty))$ that are $L^2$-orthogonal to $\{e_{\s, 0},e_{\s, 1}\}$.  As
a consequence of the Lemma \ref{injectivity}, ${\cal L}_\s$ is
injective when acting on the space of these functions.  Hence, 
Fredholm alternative assures there exists, for each $v_1 > v_0 +1$, a
unique $\bar w\in{\cal C}^{2,\a}_\mu$, with $\bar w(\cdot , v)$
$L^2$-orthogonal to $e_{\s, 0}, e_{\s, 1}$ satisfying:
  \begin{equation}\label{eqwf}
    \left\{\begin{array}{l}
        {\cal L}_\s\, \bar w = \bar f \quad \mbox{ on } I_\s \times [v_0,v_1],\\
          \bar w( \cdot , v_0)= \bar w( \cdot ,v_1)=0.
    \end{array}\right.
  \end{equation}

  \begin{assertion}
  \label{assertion.step1}
  There exist $c\in\R$ and $\s_0\in(0,\pi/2)$ such that, for every
  $\s\in(0,\s_0)$, $v_0\in\R$, $v_1>v_0+1$ and $\bar f\in{\cal
    C}^{0,\a}_{\mu}(I_\s\times [v_0,v_1])$, there exists $\bar
  w\in{\cal C}^{2,\a}_{\mu}(I_\s\times [v_0,v_1])$ $L^2$-orthogonal to
  $\{e_{\s,0},e_{\s,1} \}$ satisfying (\ref{eqwf}) and
 \begin{equation}
\label{estimate.assertion}
\sup_{I_\sigma \times [v_0,v_1]}  e^{-\mu v}|\bar w|\leqslant 
c\,\sup_{I_\sigma \times [v_0,v_1]}  e^{-\mu v} |\bar f|.
\end{equation}
\end{assertion}

Suppose by contradiction that Assertion \ref{assertion.step1} is false.
    Then, for every $n \in \N$, there exists $\s_n\in(0,1/n)$,
    $v_{1,n}>v_{0,n}+1$ and $\bar f_n,\bar w_n$ satisfying (\ref{eqwf}) (for
    $\s_n,v_{0,n},v_{1,n}$ instead of $\s,v_0,v_1$) such that

$$\sup_{I_{\s_n} \times [v_{0,n},v_{1,n}]}
  e^{-\mu v} \, |\bar f_n|=1,\qquad \mbox{and} $$

$$A_n:=\sup_{I_{\s_n} \times [v_{0,n},v_{1,n}]}
  e^{-\mu v} \, |\bar w_n| \to+\infty \quad \hbox{as} \quad n\to\infty.$$

\noindent
Since $I_{\s_n} \times [v_{0,n},v_{1,n}]$ is a compact set,
$A_n$
is achieved at a point $(u_n,v_n)\in I_{\s_n}
\times [v_{0,n}, v_{1,n}]$.

After passing to a subsequence, the intervals
$I_n=[v_{0,n}-v_n,v_{1,n}-v_n]$ converge to a set $I_\infty$.
Elliptic estimates 
imply that
\[
\sup_{I_{\s_n} \times [v_{0,n},v_{0,n}+\frac 1 2]} e^{-\mu v} \,
|\nabla \bar w_n| \leqslant c(\sup_{I_{\s_n} \times
  [v_{0,n},v_{0,n}+1]} e^{-\mu v} \, |\bar f_n|+ \sup_{I_{\s_n} \times
  [v_{0,n},v_{0,n}+1]} e^{-\mu v} \, |\bar w_n|) \leqslant c (1+A_n).
\]
From this estimate for the gradient of $\bar w_n$ near $v=v_{0,n}$ it
follows that $v_n$ cannot be too close to $v_{0,n}$, where $\bar w_n$
vanishes.  More precisely, $v_{0,n}-v_n$ remains bounded away from
$0$, and then converges to some $\bar v_0 \in [-\infty,0)$. Using
similar arguments, it is possible to show that $\nabla \bar w_n$ is
bounded near $v_{1,n}$ and consequently $v_{1,n}-v_n$ converges to
some $\bar v_1 \in (0,\infty]$.  Then 
we can conclude that $I_\infty =
[\bar v_0, \bar v_1].$

We define
\[
\tilde w_n(u,v):= \frac{e^{-\mu v_n}} {A_n}
\bar w_n(u,v+ v_n),\quad (u,v)\in I_{\s_n}\times I_n .
\]
We observe that
\[
|\tilde w_n(u,v)|\leqslant e^{\mu\,v}\,\frac{e^{-\mu
    (v+ v_n)}|\bar w_n(u,v+ v_n)|}{A_n}\leqslant e^{\mu\,v}
\]
\begin{equation}
\mbox{and}\quad  \sup_{I_{\s_n} \times I_n} e^{-\mu v} \, |\tilde w_n|=1.
\label{equationcontradiction.n}
\end{equation}
Using the above estimate for $\nabla \bar w_n$ we obtain
$$
|\nabla \tilde w_n|\leqslant c\frac{1+A_n}{A_n}\,e^{\mu\,v}< 2c\,e^{\mu\,v}.
$$

Since the sequences $\{\tilde w_n\}_n$ and $\{\nabla \tilde w_n\}_n$
are uniformly bounded, Ascoli-Arzel\`a's theorem assures that, if
$n\to+\infty$ a subsequence of $\{\tilde w_n\}_n$ converges on compact
sets of $I_{0}\times I_\infty$ to a function $\tilde w_\infty$ which
vanishes on $I_0\times\partial I_\infty$, when $\partial I_\infty \neq
\emptyset$, and such that $\tilde w_\infty(\cdot,v)$ is $L^2$-orthogonal
to $\{e_{0,0}, e_{0,1}\}$ for each $v\in I_\infty$.  Moreover,
\begin{equation}
\sup_{I_{0} \times I_\infty} e^{-\mu v} \, |\tilde w_\infty|=1.
\label{equationcontradiction}
\end{equation}

Since $\s_n \to 0$ as $n \to \infty$, we can conclude that $\tilde
w_\infty$ satisfies
\[
\left\{\begin{array}{l}{\cal L}_0 \tilde w_\infty=0\\
   \tilde w_\infty|_{I_0 \times\partial I_\infty}=0\ \mbox{(if }\partial I_\infty \neq \emptyset\mbox{)}
  \end{array}\right.
\]

If $I_\infty$ is bounded, the maximum principle allows us to conclude
that $\tilde w_\infty=0$ on $I_0 \times I_\infty$, which
contradicts~\eqref{equationcontradiction}.  Hence $I_\infty$ is an
unbounded interval.

Recall ${\cal L}_0$ is given in terms of the $(x,y)$-variables. The
equation ${\cal L}_0 \tilde w_\infty=0$ becomes
 $$\partial^2_{xx}\tilde w_\infty+\sin y\,
\partial_{y}\left(\sin y \,\partial_{y}\tilde w_\infty \right)+ 
2\sin^2 y\,\tilde w_\infty=0.$$
Now we consider the eigenfunctions decomposition of $\tilde w_\infty$,
$$\tilde w_\infty(x,y)= \sum_{j\geqslant 2} a_{j}(y)\, \cos(j\,x).$$ 
Each coefficient $a_{j}$, $j \geqslant 2$, must satisfy the associated
Legendre differential equation (see Appendix C) 
$$\sin y\, \partial_{y}\left(\sin y \,\partial_{y}a_{j} \right)
-j^2a_{j}+2 \sin^2 y \,a_{j} =0.$$ We obtain that $a_j(y)$ equals the
associated Legendre functions of second kind, i.e.
$a_{j}(y)=Q^j_1(\cos y),$ $j \geqslant 2.$

We obtain
from (\ref{change.coordinates}) that
\[
u(x) \to x \quad \hbox{and} \quad v(y) \to \frac 1 2 \ln \left|\tan
  \frac{y}{2}\right|\quad\mbox{as } \s \to 0.
\]
In particular, define $y(v)=2 \arctan(e^{2v})$ for $\s=0$.  Therefore,
\begin{equation}
\label{change.coordinates.-1}
 \cos y(v)=\frac{1-e^{4v}}{1+e^{4v}} 
 \qquad \mbox{and}
\end{equation}
\[
\tilde w_\infty(u,v)= \sum_{j\geqslant 2}
Q^j_1\left(\frac{1-e^{4v}}{1+e^{4v}}\right)\cos(j\,u).
\]
It is possible
to show that  $|a_j|$ tend to $+\infty$ as the 
function $e^{2j|v|}.$  
 Since the interval $I_\infty$ is unbounded, we reach a
contradiction with~\eqref{equationcontradiction}.  This proves
Assertion \ref{assertion.step1}.\\

Let $c\in\R$ and $\s_0$ satisfy Assertion \ref{assertion.step1} and
fix $\s\in(0,\s_0)$, $v_0\in\R$ and $\bar f\in{\cal
  C}^{0,\a}_{\mu}(I_\s\times [v_0,v_1])$. Then, for every $v_1>v_0+1$,
there exists a function $\bar w$ which is $L^2$-orthogonal to
$\{e_{\s,0},e_{\s,1} \}$ and satisfies (\ref{eqwf}) and
(\ref{estimate.assertion}).  Let's take the limit as $v_1\to\infty.$
Clearly,
$$
e^{-\mu\,v}|\bar w|\leqslant\|\bar w\|_{{\cal C}^{0,\a}_\mu(I_\s\times
  [v_0,v_1])}\leqslant c\, \|\bar f\|_{{\cal C}^{0,\a}_\mu(I_\s\times
  [v_0,v_1])}.
$$
And using Sch\"auder estimates we get
 $$ e^{-\mu\,v}|\nabla \bar w|\leqslant  
 \|\bar w\|_{{\cal C}^{2,\a}_\mu(I_\s\times [v_0,v_1])}\leqslant
 c_1 \left(\|\bar f \|_{{\cal C}^{0,\a}_\mu(I_\s\times [v_0,v_1])}+  
 \|\bar w\|_{{\cal C}^0_\mu(I_\s\times [v_0,v_1])}\right)
 \leqslant c_2 \|\bar f \|_{{\cal C}^{0,\a}_\mu(I_\s\times [v_0,v_1])} .
   $$

\noindent
Hence Ascoli-Arzel\`a's theorem assures that a subsequence of
$\{\bar w_{v_1}\}_{v_1>v_0+1}$ converges to a function $\bar w$
defined on $I_\s\times[v_0,+\infty)$, which 
 satisfies $$\sup_{I_\sigma \times [v_0,+\infty)}  e^{-\mu\,v}|\bar w|
\leqslant  c\, \sup_{I_\sigma \times [v_0,+\infty)}  e^{-\mu\,v} 
|\bar f|. $$ Using again elliptic estimates we can conclude
that $\bar w$ satisfies the statement 
of Proposition \ref{inverse}. The uniqueness of the solution
follows from Lemma \ref{injectivity}.\\

\noindent {\bf Step 2} Let's now consider $f\in{\cal
C}^{0,\a}_{\mu}(I_\s\times [v_0,+ \infty))$ in ${\rm Span} \{e_{\s,
0},e_{\s, 1}\}$, i.e.
$$f(u,v)= f _0(v) \, e _{\s, 0}(u)   +
f _1(v) \, e _{\s, 1}(u) .$$ We extend the functions $f_0(v),f_1(v)$
for $v \leqslant v_0$ by $f_0(v_0), f_1(v_0)$, respectively.  Given
$v_1>v_0+1$, consider the problem
\begin{equation}
\label{eqwf2} \left\{\begin{array}{l}
    L_{\s,j}w_j
    = f _j,\quad \ v\in(-\infty, v_1]\\
    w _j(v_1)= \partial_v w _j (v_1)=0.
  \end{array}\right.
\end{equation}
Peano theorem assures the existence and the uniqueness of the solution $w _j.$
Our aim consists of proving the following result.

\begin{assertion}\label{assertion.step2}
$\sup_{(-\infty,v_{1}]} e^{-\mu v } |w_{j}| \leqslant 
c \sup_{(-\infty,v_{1}]} e^{-\mu v } |f_{j}| $
for some constant $c$ which does not depend on $v_1.$
\end{assertion}
\noindent
Suppose by contradiction that, for every $n\in\N$, there exists
$\s_n\in(0,1/n)$, $v_{1,n}>v_{0,n}+1$ and $f_{j,n},w_{j,n}$
satisfying~(\ref{eqwf2}) such that

$$\sup_{(-\infty,v_{1,n}]}
e^{-\mu v} \, |\bar f_{j,n}|=1, $$

$$A_n:=\sup_{ (-\infty,v_{1,n}]}
e^{-\mu v} \, |w_{j,n}| \to+\infty \quad \hbox{as} \quad n\to\infty.$$

\noindent The solution $w_{j,n}$ of the previous equation is a linear
combination of the two solutions of the homogeneous problem
$L_{\s_n,j}\, w =0.$ They grow at most linearly at $\infty$ (recall
that the Jacobi fields have this rate of growth).  Hence the supremum
$A_n$ is achieved at a point $v_n \in (-\infty, v_{1,n}].$ We define
on $I_n:=(-\infty,v_{1,n}-v_n]$ the function
\[
\tilde w_{j,n} (v):= \frac{1}{A_n} \, e^{- \mu v_n}\, w_{j,n}
(v_n + v).
\]

As in Step 1, one shows that the sequence
$\{v_{1,n}-v_n\}_n$ remains bounded away from $0$ and, after passing
to a subsequence, it converges to $\bar v_1 \in (0, +\infty]$ and
$\{\tilde w_{j,n}\}_n$ converges on compact subsets of $
I_\infty=(-\infty,\bar v_1]$ to a nontrivial function $\tilde w_j$
such that
\begin{equation}
\sup_{v \in I_\infty} e^{-\mu v} \, |\tilde w_j|=1
\label{equationcontradiction2}
\end{equation}
and $\tilde w_j ( \bar v_1)=\partial_v w_j ( \bar v_1)= 0$ , if $\bar
v_1 < +\infty$.  The function $\tilde w_j$ is the solution of a second
order ordinary differential equation, given, in terms of the
$(x,y)$-variables, by
\begin{equation}
\label{equationlimit2}
\sin y\, \partial_{y}\left(\sin y \,\partial_{y}\tilde w_j \right)
-j^2\,\tilde w_j+2 \sin^2 y \,\tilde w_j  =0.
\end{equation}
If $\bar v_1 < +\infty$ then $\tilde w_j=0$ and this is a
contradiction with \eqref{equationcontradiction2}. In the case $\bar
v_1 = +\infty$ we will try to reach a contradiction determining the
solution \eqref{equationlimit2}. This is again the associated Legendre
differential equation (see Appendix C).
The solutions of
the equation (\ref{equationlimit2}) are a linear combination of the
associated Legendre functions of first kind, $P_1^j(\cos y),$ and
second kind, $Q_1^j(\cos y),$ with $j=0,1.$ It holds that $P_1^0(\cos
y)=\cos y$ and $P_1^1(\cos y)=-\sin y$.  We change of coordinates to
express $\tilde w_j$ in terms of the $(u,v)$-variables. It is possible
to show that as $v \to \pm \infty$ then $|Q_1^1(\cos y(v))|$ and
$|Q_1^0(\cos y(v))|$ tend to $\infty$ respectively as $e^{2|v|} $ and
$|v|.$ We can conclude that functions $\tilde w_1$ and $\tilde w_0$ do
not satisfy the equation (\ref{equationcontradiction2}) with $\mu \in
(-2, -1),$ a contradiction. \\

Therefore, $\sup_{(-\infty, v_1]} e^{- \mu v} \, |w_j| \leqslant c\,
\sup_{(-\infty, v_1]} e^{- \mu v} \, |f_j|$. Taking $v_1\to+\infty$,
we get a solution of $L_{\s,j} \, w_j = f_j$ defined in $ [v_0,
+\infty)$ which satisfies
\[
\sup_{ [v_0, +\infty)} e^{-\mu v} \, |w_j| \leqslant c \,
\sup_{ [v_0, +\infty)} e^{-\mu v} \, |f_j|.
\]
Elliptic estimates allow us to obtain the wanted estimates for the
derivatives.  To prove the uniqueness of solution, it is sufficient to
observe that no solution of ${\cal L}_\s v =0$ that is collinear with
$e_{\s,0}$ and $e_{\s,1}$ decays exponentially at $\infty$. This fact
follows from the behaviour of the Jacobi fields.
\end{proof}

\section {A family of minimal surfaces close to $\widetilde M_{\s,0,\beta}$ 
and $\widetilde M_{\s,\a,0}$}
\label{family.M.abt}
The aim of this section is to find a family of minimal surfaces 
near to a translated and dilated copy of $\widetilde M_{\s,0,\b}$ 
and $\widetilde M_{\s,\a,0}$ with given Dirichlet data on the
boundary.
We start recalling that in subsection \ref{Msab.graph}
we observed that a translated and dilated copies of $\widetilde M_{\s,\a,0}$ 
and $\widetilde M_{\s,0,\b}$ 
can be expressed as the graphs over the 
$\{x_3=0\}$ plane respectively of the functions 
\begin{equation}
\label{graphannulus1}
\tilde U_{\gamma,\xi}(r,\t)=-(1+\g)\ln(2r)+r\kappa_{1} \cos\t-
\frac{1+\g}{r}\xi_1\cos\t+d+{\mathcal O} (\e)
\end{equation}
\begin{equation}
\label{graphannulus2}
\tilde U_{\gamma,\xi}(r,\t)=-(1+\g)\ln(2r)-r \kappa_{2}\sin\t -
\frac{1+\g}{r}\xi_2\sin\t +d+{\mathcal O} (\e)
\end{equation}
where 
$d=\xi_3+(1+\g) \ln (1+\g),$
$\kappa_1,\kappa_2,\xi_1,\xi_2,\xi_3 ,\g\in\R$  
small enough, $\kappa_1=b_1+b_2,$ $\kappa_2=b_1-b_2,$ 
$b_1=\sin \frac{\a-\b} 2,$ 
$b_2=\sin \frac{\a+\b} 2,$ 
and $r$ belongs to a neighbourhood of $r_\e=\frac{1}{2\sqrt{\e}}.$\\

%
%
%

\noindent
We denote by $Z$ the immersion of the surface $\widetilde M_{\s,\a,\b}.$
The following proposition, whose proof is contained in 
section \ref{appendixb},  
states that the linearized of the 
mean curvature operator is the Lam\'e operator introduced
in section \ref{subsecLame}.
\begin{proposition}
\label{operator.lame} The surface parameterized by $Z_{f}:= Z +f \, N$
is minimal if and only if the function $f$ is a solution of
$${\cal L }_\s f =Q_\s(f).$$
\noindent where ${\cal L}_\s : = \partial^2_{uu} +
\partial_{vv}^2 +   2\sin^2\s\, \cos^2 (x(u))+ 
2\cos^2\s\, \sin^2 (y(v))$ is the Lam\'e operator and $Q_\s$ is a
nonlinear operator which satisfies
\begin{equation}
\label{proprieta.Q.lame}
\| Q_\s(f_2) - Q_\s(f_1)\|_{{\mathcal C}^{0, \alpha} (I_\s \times
[v,v+1])} \leqslant c \, \sup_{i=1,2} \| f_i\|_{{\mathcal C}^{2, \alpha}
(I_\s \times [v,v+1])} \,  \| f_2 -f_1\|_{{\mathcal C}^{2, \alpha}
(I_\s \times [v,v+1])}
\end{equation}
for all $f_1, f_2$ such that $\| f_i\|_{{\mathcal C}^{2, \alpha}
(I_\s \times [v,v+1])} \leqslant 1$. Here the constant $c >0$ does not
depend on $v \in {\mathbb R}$, nor on $\s \in (0,\frac \pi 2)$.
\end{proposition}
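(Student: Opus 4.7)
\medskip
\noindent \textbf{Proof plan.} My plan is to start from the general expression of the mean curvature of a normal graph over a minimal surface and then use the conformal parametrization of $\widetilde M_{\s,\a,\b}$ on the cylinder $I_\s\times\R$ introduced in Section~\ref{subsecLame} to identify the linear part with the Lam\'e operator. Concretely, since $Z$ is a conformal minimal immersion with first fundamental form $ds^2=\rho^2(du^2+dv^2)$, second fundamental form $A$, and unit normal $N$, a classical computation (Taylor expansion of the area element in the $f$-direction) yields
\[
2H(Z_f)=\Delta_{ds^2}f+|A|^2 f+\mathcal{N}(f,\nabla f,\nabla^2 f)
=\mathcal{J}f+\mathcal{N}(f,\nabla f,\nabla^2 f),
\]
where $\mathcal{N}$ is a smooth function of its arguments with $\mathcal{N}(0,0,0)=0$ and $D\mathcal{N}(0,0,0)=0$, so $\mathcal{N}$ is quadratic near the origin. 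The coefficients of $\mathcal{N}$ are polynomial expressions in $\rho^{-2}$, the components of $A$, and their $u,v$-derivatives.

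Next I would invoke the identity $\mathcal{J}=\frac{-K}{k(x(u),y(v))}\,\mathcal{L}_\s$ that was obtained in Section~\ref{Jacobi.THL} from writing $\Delta_{ds^2}=-K\Delta_{ds_0^2}$ and expressing $\Delta_{ds_0^2}$ in $(u,v)$-coordinates. Dividing the equation $2H(Z_f)=0$ by the positive factor $\frac{-K}{k}$ gives precisely
\[
\mathcal{L}_\s f=\frac{k(x(u),y(v))}{K}\,\mathcal{N}(f,\nabla f,\nabla^2 f)=:Q_\s(f),
\]
which is the desired equation; the equivalence with minimality of $Z_f$ is tautological once $-K>0$ away from flat points, and $\widetilde M_{\s,\a,\b}$ has nowhere vanishing curvature in a neighbourhood of the region we work on by the explicit Weierstrass data.

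The remaining task is to establish the Lipschitz-quadratic estimate~(\ref{proprieta.Q.lame}) with a constant independent of $v\in\R$ and $\s\in(0,\pi/2)$. The quadratic character follows from Taylor's theorem applied to $\mathcal{N}$: writing $Q_\s(f_2)-Q_\s(f_1)=\int_0^1\frac{d}{dt}Q_\s(f_1+t(f_2-f_1))\,dt$, the integrand is a sum of terms each containing one factor $f_2-f_1$ (or derivative thereof) and at least one factor $f_1+t(f_2-f_1)$ (or derivative thereof), so that the unit-ball hypothesis $\|f_i\|_{C^{2,\a}(I_\s\times[v,v+1])}\leqslant 1$ together with standard H\"older product estimates yields the claim, provided the coefficients $\frac{k}{K}$, $\rho^{-2}$, the entries of $A$ and their derivatives are bounded in $C^{0,\a}(I_\s\times[v,v+1])$ by a constant independent of $v$ and $\s$.

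The main obstacle is therefore this uniform bound on the coefficients. Uniformity in $v$ is handled by $V_\s$-periodicity of the geometry of $\widetilde M_{\s,\a,\b}$ in the $v$-variable (the immersion $Z$ descends to $M_{\s,\a,\b}$ on $I_\s\times J_\s$), so it suffices to control the coefficients on one fundamental domain. Uniformity in $\s\in(0,\pi/2)$ follows from the explicit Weierstrass data: $\rho$, $K$, $A$ and $k$ are all explicit rational/trigonometric expressions in $x(u),y(v)$ and $\s,\a,\b$ with no degeneracy in the interior of the cylinder, and the ratio $k/K$ extends continuously up to the catenoidal limit $\s\to 0$ after passing to the $(u,v)$-variables, as can be verified from the expansions of Lemmas~\ref{lemX}--\ref{lemX_3}. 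Once these uniform bounds are in place, estimate~(\ref{proprieta.Q.lame}) follows by a direct application of the H\"older product rule.
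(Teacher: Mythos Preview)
Your high-level strategy---linearise the mean curvature of a normal graph, recognise the Jacobi operator, and then use the identity $\mathcal J=\frac{-K}{k}\,\mathcal L_\s$ from Section~\ref{Jacobi.THL} to isolate $\mathcal L_\s$---is sound, and it is more conceptual than the route the paper takes. The paper instead computes the area integrand $E_fG_f-F_f^2$ explicitly in the conformal $(p,q)$-coordinates on $\overline{\C}$, writes down the Euler--Lagrange equation, and only then passes to the $(u,v)$-variables. The point of that explicit computation is that the conformal factor $\frac{-K\Lambda}{k}$ is seen to multiply the \emph{entire} equation---linear and nonlinear parts alike---so it cancels, and what remains has coefficients built from $\sqrt{-K}$, $K$, $Kk$ and their $(u,v)$-derivatives, all of which are then checked to be bounded uniformly in $\s$ and periodic in $v$.

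Your argument has a genuine gap precisely at this cancellation step. You write $Q_\s(f)=\frac{k}{K}\,\mathcal N(f,\nabla f,\nabla^2 f)$ after dividing by $\frac{-K}{k}$, and justify this by asserting that ``$\widetilde M_{\s,\a,\b}$ has nowhere vanishing curvature in a neighbourhood of the region we work on.'' That is false: the Gauss curvature vanishes at every Scherk-type end (where $g=0$ or $g=\infty$), and these ends sit at isolated points of the cylinder $I_\s\times\R$ which recur $V_\s$-periodically in $v$. Since the proposition demands a constant independent of $v\in\R$, you cannot avoid slabs containing such points, and at those points $\frac{k}{-K}=\tilde\rho^2\to\infty$. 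Thus the boundedness of the coefficients of $Q_\s$ is exactly what needs to be proved, not assumed; saying the coefficients of $\mathcal N$ are ``polynomial in $\rho^{-2}$, the components of $A$, and their derivatives'' is too coarse to conclude that $\tilde\rho^2\mathcal N$ stays bounded. (It \emph{is} true---essentially because the full mean curvature operator, not just its linearisation, scales like $\tilde\rho^{-2}$ in conformal coordinates---but establishing this requires either the explicit calculation the paper performs or a careful tracking of how each term in the normal-graph expansion scales with $\tilde\rho$, which your proposal does not supply.) Your remark about $k/K$ extending continuously concerns the limit $\s\to 0$, which is a separate issue from the behaviour at the ends for fixed~$\s$.
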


\noindent
As a consequence of the dilation of factor $1+\g$ of the surface 
the minimal surface equation becomes
\begin{equation}
\label{operator.THL}
 {\cal L}_\s \, w =\frac{1}{1+\g} Q_\s \left((1+\g)w \right).
\end{equation}

\noindent
We now truncate the surfaces $\widetilde M_{\s,\a,0}$ and 
$\widetilde M_{\s,0,\b}$ at the graph of the
curve $r = \frac 1 {2  \sqrt {\e}}$ of the function
\eqref{graphannulus1} and \eqref{graphannulus2}
 with, respectively, $\b=0$ and $\a=0$
and we consider only the upper half  of these
surfaces which we   call $M_1$ and $M_2.$\\

\noindent
We are interested in minimal normal graphs over these surfaces
 which are  asymptotic to them. 
The normal graph of the function $w$ over $M_1,M_2$
is minimal, if and only if $w$ is a solution of (\ref{operator.THL}).\\

\noindent
In subsection \ref{Msab.graph} 
we have proved lemma 
\ref{lemX_3}, which allows us to parametrize 
the surfaces $M_1,M_2$ in a neighbourhood 
of $r_\e$ as the graph of the functions given above.
The surfaces $M_1$ and $M_2$ are parametrized on
$I_\s \times [v_\e,V_\s]$ where $v_\e$ denote
value of the variable  $v$ corresponding
to the value $r_\e$ of the variable $r.$
In section \ref{sec:KMR}
we showed 
that $v_\e=-\frac 1 2 \ln \e+{\mathcal O}(1).$\\

\noindent
It is important to remark that the boundary  of these surfaces 
does not correspond to the curve $v = v_\e.$ 
We therefore modify the above parametrization so
that for $v \in [v_\e-\ln 2,v_\e +\ln 2]$
the image of the functions \eqref{graphannulus1}
and \eqref{graphannulus2}
corresponds to the horizontal curve $v = v_\e$.
Furthermore we would like that the normal vector field relative to 
$M_1,M_2$ is vertical near the boundary of this
surface. This can be achieved by modifying the normal vector field
into a transverse vector field $\tilde N$ which agrees with the
normal vector field $N$ for all $v \geqslant v_\e + \ln 4$ and with
the vector $e_3$ for all $v \in [v_\e , v_\e + \ln 2]$.\\

\noindent
Now, we consider a graph over this surface for some function $u$,
using the modified vector field $\tilde N$. This graph will be
minimal if and only if the function $u$ is a solution of 
a nonlinear elliptic equation related to 
(\ref{operator.THL}). To get the new equation, we
take
into account the effects of the change of parameterization and the
change in the vector field $N$ into $\tilde N.$ The
new minimal surface equation is
\begin{equation}
\label{operator.tilde} {\cal L}_\s  \, w   =  \tilde L_\e \, w + \tilde Q_\s \left(w \right).\\
\end{equation}
\noindent
Here $\tilde Q_\s$ enjoys the same properties as $Q_\s,$ since it is
obtained by a slight perturbation from it.
The operator $\tilde L_\e$ is a linear second order operator whose
coefficients are supported in $[ v_\e , v_\e + \ln 4] \times S^1
$ and are bounded by  a constant multiplied for $\e^{1/2}$, in
${\mathcal C}^\infty$ topology, where partial derivatives are
computed with respect to the vector fields $\partial_u$ and
$\partial_v$.\\

\noindent
As a fact, if we
take into account the effect of the change of the normal vector
field, we would obtain, applying the result of Appendix B of \cite{HP}, a
similar formula where the coefficients of the corresponding
operator $\tilde L_\e$ are bounded by a constant multiplied for
$\e$  since
$$ \tilde N_\e \cdot N_\e  = 1 + {\mathcal O} (\e) $$
for $t \in [t_\e , t_\e + \ln 2]$. Instead, if we take into
account the effect of the change in the parameterization, we would
obtain a similar formula where the coefficients of the
corresponding operator $\tilde L_\e$ are bounded by a constant
multiplied for $\e^{1/2}.$ 
The estimate of the coefficients of
$\tilde L_\e$ follows from these considerations.\\

\noindent
In the following of this section we will give 
the detailed proof of the existence of a family of a minimal graph 
about $M_1$ and asymptotic to it. The proof relative to the case where
the surface $M_1$ is replaced by $M_2$ can be obtained easily from
the previous one. We recall that the surfaces $M_1$ and $M_1$ are
respectively invariant with respect to two different mirror symmetries. 
A normal graph of the function $w=w(u,v)$ about $M_1$ (respectively about $M_2$) 
inherits the same property of symmetry if $w$ is an even 
(respectively odd) function with respect to the variable $u.$ 
Then once the proof of the result concerning $M_1$ will be completed,
 to obtain the result about $M_2$ it will be sufficient to replace
 even functions with odd functions and to repeat the 
 same arguments.\\

\noindent
Now, assume that we are given a function $\varphi \in {\mathcal
C}^{2, \alpha} (I_\s)$ which is even with respect to $u$,
$L^2$-orthogonal to $e_{\s,0},e_{\s,1}$ and such that
\begin{equation}
\label{stima.varphi.KMR}
 \| \varphi \|_{{\mathcal C}^{2,\a}} \leqslant k \e.
 \end{equation}
\noindent
We define
$$ w_\varphi  (\cdot, \cdot) : = \bar {\mathcal H}_{v_\e,\varphi}
 ( \cdot,\cdot ),$$
where $v_\e=- \frac 1 2 
\ln \e +{\mathcal O}(1)$ and $\bar {\mathcal H}$ is introduced
in proposition
\ref{otheroperator}.

\noindent
In order to solve the equation (\ref{operator.tilde}), we choose
$\mu \in (-2, -1)$ 
and look for $u$ of the form $u = w_\varphi +g$
where $g \in {\mathcal C}^{2, \alpha}_\mu (I_\s  \times [v_\e , +\infty)
).$ 
Using  
Proposition \ref{inverse}, we can rephrase this problem as a fixed
point problem
\begin{equation}
\label{fixedpointequation} g =  S (\varphi,g)
\end{equation}
where the nonlinear mapping $S$ which depends on $\e$ and
$\varphi$ is defined by
$$ S (\varphi,g) : = {G}_{\e,  v_\e} \left( \tilde L_\e (w_\varphi +g) -  
{\cal L}_\s \, w_{\varphi}  +
 \tilde Q_\e \left( w_\varphi + g \right) \right).$$
where the operator $G_{\e,  v_\e}$ is defined in   Proposition
\ref{inverse}. To prove the existence of a fixed point for
(\ref{fixedpointequation}) we need the following

\begin{lemma}
\label{Sproperties.sigma} Let $\mu \in (-2,-1).$ 
There exist some constants $c_k >0$ and $\e_k >0$, such that
\begin{equation}
\label{estimateS0} 
\|S (\varphi,0) \|_{{\mathcal C}^{2,\a}_\mu 
(I_\s  \times [v_\e , +\infty))} \leqslant c_k \e^{3/2+\mu/2}
\end{equation}
and, for all $\e \in (0, \e_k)$
$$ \|S (\varphi,g_2)-S (\varphi,g_1)\|_{{\mathcal C}^{2,\a}_\mu
(I_\s  \times [v_\e , +\infty)
)} 
\leqslant  \frac{1}{2} \,
\|g_2-g_1 \|_
{{\mathcal C}^{2,\a}_\mu(I_\s  \times [v_\e , +\infty)
)},$$
\begin{equation}
\label{Terza.proprieta.S}
 \|S (\varphi_2,g)-S (\varphi_1,g)\|_{{\mathcal C}^{2,\a}_\mu
(I_\s  \times [v_\e , +\infty))} 
\leqslant  c\e^{\frac{1}{2}+\mu/2} \,
\|\varphi_2-\varphi_1\|_{{\mathcal C}^{2,\a}}
\end{equation}
 for all $g,g_1,g_2 \in
{\mathcal C}^{2,\a}_{\mu}(I_\s \times [v_\e , +\infty))$ such that
$\| g_i \|_{{\mathcal C}^{2,\a}_\mu(I_\s \times [v_\e , +\infty))} 
\leqslant  \, 2c_k \e^{3/2+\mu/2}$ 
for all boundary data 
$\varphi,\varphi_1,\varphi_2 \in {\mathcal C}^{2, \alpha} (S^1)$ satisfying 
\eqref{stima.varphi.KMR}.
\end{lemma}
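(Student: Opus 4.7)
The plan follows the standard fixed-point scheme used earlier for the CHM surface (compare Proposition \ref{contrazione.costa.xi}): one invokes the uniform bound of Proposition \ref{inverse} for $G_{\e,v_\e}$ and then estimates, in the weighted norm $\mathcal{C}^{0,\alpha}_\mu$, each of the three summands appearing in $S(\varphi,g) = G_{\e,v_\e}\left(\tilde L_\e(w_\varphi+g) - \mathcal{L}_\s w_\varphi + \tilde Q_\e(w_\varphi+g)\right)$. The quantitative ingredients I plan to use are: (a) the coefficients of $\tilde L_\e$ are bounded by $c\e^{1/2}$ and supported in $I_\s\times[v_\e,v_\e+\ln 4]$, with $v_\e=-\tfrac12\ln\e+\mathcal{O}(1)$, so that multiplication by the weight $e^{-\mu v}$ contributes at most a factor $\e^{\mu/2}$ on this support; (b) $w_\varphi=\bar{\mathcal{H}}_{v_\e,\varphi}$ satisfies $|w_\varphi|\leqslant c\|\varphi\|\leqslant ck\e$ and decays exponentially as $v-v_\e\to+\infty$, thanks to the $L^2$-orthogonality of $\varphi$ to $e_{\s,0},e_{\s,1}$ (which forces decay rate at least $2$); (c) the Lipschitz estimate \eqref{proprieta.Q.lame} transfers verbatim to $\tilde Q_\e$.

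To establish \eqref{estimateS0} I specialize to $g=0$ and bound each summand. The term $\tilde L_\e w_\varphi$ has pointwise size at most $c\e^{1/2}\cdot k\e$ on its support, producing a $\mathcal{C}^{0,\alpha}_\mu$ bound of order $c_k\e^{3/2+\mu/2}$. The residual $\mathcal{L}_\s w_\varphi$ is controlled using Proposition \ref{otheroperator} together with the fact that the $v$-potential $2\cos^2\s\,\sin^2 y(v)$ in $\mathcal{L}_\s$ is $\mathcal{O}(\e)$ near $v_\e$ (by Lemma \ref{remark.v.epsilon}): combined with the exponential decay of $w_\varphi$, this gives a contribution of order $\e^{2+\mu/2}$, strictly dominated by the previous one. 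Finally $\tilde Q_\e(w_\varphi)$ is quadratic, bounded pointwise by $ck^2\e^2 e^{-2(v-v_\e)}$; since $\mu+2>0$, the supremum of $e^{-\mu v}$ against this decay is achieved at $v=v_\e$ and gives a weighted norm $\leqslant ck^2\e^{2+\mu/2}$, again subleading. Summing and applying $G_{\e,v_\e}$ yields \eqref{estimateS0}.

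For the contraction in $g$, the difference $S(\varphi,g_2)-S(\varphi,g_1)$ splits into $\tilde L_\e(g_2-g_1)$, bounded by $c\e^{1/2+\mu/2}\|g_2-g_1\|_{\mathcal{C}^{2,\alpha}_\mu}$ by the same support-plus-weight argument, and $\tilde Q_\e(w_\varphi+g_2)-\tilde Q_\e(w_\varphi+g_1)$, which by \eqref{proprieta.Q.lame} is bounded by $c(\|w_\varphi\|+\|g_i\|)\|g_2-g_1\|\leqslant c(k\e+2c_k\e^{3/2+\mu/2})\|g_2-g_1\|$. Both factors are $o(1)$, so for $\e<\e_k$ the Lipschitz constant drops below $1/2$. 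For \eqref{Terza.proprieta.S} I exploit the linearity $w_{\varphi_2}-w_{\varphi_1}=\bar{\mathcal{H}}_{v_\e,\varphi_2-\varphi_1}$ and rerun the estimate used for \eqref{estimateS0} with the factor $k\e$ replaced by $\|\varphi_2-\varphi_1\|_{\mathcal{C}^{2,\alpha}}$, producing the claimed factor $c\e^{1/2+\mu/2}$; the quadratic contributions $\tilde Q_\e(w_{\varphi_2}+g)-\tilde Q_\e(w_{\varphi_1}+g)$ give a term of the same or smaller order by \eqref{proprieta.Q.lame}.

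The main obstacle I foresee is the estimate of $\mathcal{L}_\s w_\varphi$: it requires that $\bar{\mathcal{H}}$ be designed (via Proposition \ref{otheroperator}) so that the residual is genuinely subleading relative to $\tilde L_\e w_\varphi$, and the argument depends essentially on two features — the orthogonality of $\varphi$ to $\{e_{\s,0},e_{\s,1}\}$ (forcing exponential decay of rate at least $2$, precisely what is needed for the integrals against $e^{-\mu v}$ with $\mu\in(-2,-1)$ to converge near $v=v_\e$) and the $\mathcal{O}(\e)$ smallness of the $v$-potential near $v_\e$ given by Lemma \ref{remark.v.epsilon}. Without either of these features, the residual $\mathcal{L}_\s w_\varphi$ would pollute the leading bound $\e^{3/2+\mu/2}$ and the whole scheme would collapse.
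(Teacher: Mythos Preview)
Your approach is the same as the paper's and is correct in outline: split $S$ into the three contributions, use the uniform bound on $G_{\e,v_\e}$ from Proposition~\ref{inverse}, exploit that $w_\varphi=\bar{\mathcal H}_{v_\e,\varphi}$ is harmonic so that $\mathcal L_\s w_\varphi=2k\,w_\varphi$ with $k\leqslant c\e$, and then use the $\e^{1/2}$ bound on the coefficients of $\tilde L_\e$ together with the quadratic estimate \eqref{proprieta.Q.lame} for $\tilde Q_\e$. The paper carries out exactly this computation for \eqref{estimateS0} and leaves the other two to the reader.

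There is one slip in your contraction argument. You claim that $\tilde L_\e(g_2-g_1)$ is bounded in $\mathcal C^{0,\alpha}_\mu$ by $c\,\e^{1/2+\mu/2}\|g_2-g_1\|_{\mathcal C^{2,\alpha}_\mu}$ ``by the same support-plus-weight argument''. But that argument applied to $w_\varphi$ because $w_\varphi$ obeys a \emph{uniform} pointwise bound $|w_\varphi|\leqslant k\e$, so multiplying by $e^{-\mu v}\sim\e^{\mu/2}$ on the support produced the extra factor. The functions $g_i$ already live in the weighted space, so on $[v_\e,v_\e+\ln 4]$ one has $\|g_2-g_1\|_{\mathcal C^{2,\alpha}([v,v+1])}\leqslant e^{\mu v}\|g_2-g_1\|_{\mathcal C^{2,\alpha}_\mu}$, and the weight factor cancels:
\[
\|\tilde L_\e(g_2-g_1)\|_{\mathcal C^{0,\alpha}_\mu}\leqslant c\,\e^{1/2}\,\|g_2-g_1\|_{\mathcal C^{2,\alpha}_\mu}.
\]
This matters: since $\mu\in(-2,-1)$ we have $\tfrac12+\tfrac{\mu}{2}<0$, so your stated factor $\e^{1/2+\mu/2}$ blows up as $\e\to 0$ and would \emph{not} give a contraction. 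With the corrected exponent $\e^{1/2}$ the argument goes through exactly as you describe.
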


\noindent{\bf Proof.} We know from   proposition \ref{inverse}
that $\| G_{\e, v_\e}(f) \|_{{\mathcal C}^{2,\a}_\mu} \leqslant c
\|f\|_{{\mathcal C}^{0,\a}_\mu},$ then
$$\| S(\varphi,0)\|_{{\mathcal C}^{2,\a}_\mu(I_\s  \times [v_\e , +\infty)
)}\leqslant 
 c \|  \tilde L_\e (w_\varphi ) -  {\cal L}_\s \, w_{\varphi}  +
  \tilde Q_\s \left( w_\varphi \right)  \|_{{\mathcal C}^{0,\a}_\mu(I_\s  \times [v_\e , +\infty)
)}\leqslant$$

$$\leqslant  c \left( \|  \tilde L_\e (w_\varphi )\|
_{{\mathcal C}^{0,\a}_\mu(I_\s  \times [v_\e , \infty)
)} +
\| {\cal L}_\s \, w_{\varphi}  \|_{{\mathcal C}^{0,\a}
_\mu(I_\s  \times [v_\e , \infty)
)}+
\|  \tilde Q_\s \left( w_\varphi \right)  \|_{{\mathcal
C}^{0,\a}_\mu(I_\s  \times [v_\e , \infty)
)}\right).$$

\noindent So we need to find the estimates for the three terms above.

\noindent We recall that $|\varphi |_{2,\a}\leqslant k \e$. For all 
$\mu \in (-2,-1),$ 
thanks to  proposition \ref{otheroperator} we
know that
\begin{equation}
\label{estimateyy+1} 
|w_\varphi|_{2,\a;[v,v+1]} \leqslant e^{ -2(v-v_\e)}|\varphi |_{2,\a}
\end{equation}

\noindent We know
that
$$\| w_\varphi \|_{{\mathcal C}^{2,\a}_\mu}= 
\sup_{v \in [v_\e, +\infty)}e^{- \mu v}
| w_\varphi |  _{2, \a ;[v,v+1] }\leqslant    \sup_{v \in [v_\e, \infty)}e^{-
\mu v} e^{ -2 (v-v_\e)} |\varphi |_{2,\a}\leqslant$$
 $$\leqslant e^{-\mu v_\e}|\varphi |_{2,\a} \leqslant c_k \e^{1+\mu/2}.$$
From this inequality and  from the estimates of the coefficients
of $\tilde  L_\e$,  it follows that
$$\| \tilde L_\e (w_\varphi ) \| _{{\mathcal C}^{0,\a}_\mu} \leqslant
 c  \e^{1/2} \| w_\varphi \|_{{\mathcal C}^{0,\a}_\mu}\leqslant c_k 
 \e^{3/2+\mu/2}.$$
As for ${\cal L}_\s ,$  being $w_\varphi$ an harmonic function we obtain
following equality 
$$ {\cal L}_\s  \, w_\varphi = 2k \, w_\varphi ,$$ where 
$k(u,v)=\sin^2\s\, \cos^2 (x(u))+ \cos^2\s\, \sin^2 (y(v))\leqslant c\e, $
because we have set $\a+\b+\sigma \leqslant \e$ (see lemma \ref{lemX_3})
 and $y_\e \leqslant y(v) \leqslant \pi,$ with $y_\e=\pi-a_\e,$
 where $a_\e={\mathcal O}(\sqrt \e).$
Therefore, we conclude that
$$ \|{\cal L}_\s \, w_\varphi \| _{{\mathcal C}^{0,\a}_\mu 
(I_\s  \times [v_\e , \infty))} 
\leqslant 2c\e\| w_\varphi \|_{{\mathcal C}^{0,\a}_\mu 
(I_\s  \times [v_\e , \infty))}\leqslant
  c_k  \e^{2+\mu/2}.$$
The last term is  estimated by
$$ \| \tilde Q_\s \left( w _\varphi \right) \|_{{\mathcal C}^{0,\a}_\mu 
(I_\s  \times [v_\e , \infty))}
\leqslant c_k  \e^{2+\mu/2}.$$
Putting together these estimates we get the first result. 
The details of other estimates are left to the reader.
\hfill \qed
\begin{theorem}
\label{fixedpoint} Let be $B:=\{g \in {\cal C}^{2,\a}_{\mu}(I_\s \times [v_\e,+\infty))\, ;\,
||g||_{{\cal C}^{2,\a}_{\mu}} \leqslant 2c_k \e^{3/2+\mu/2}\}.$ Then the nonlinear
mapping $S$ defined above has a unique fixed point $g$ in $B.$
\end{theorem}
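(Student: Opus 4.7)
The plan is to invoke the Banach (contraction) fixed point theorem in the closed ball $B$ of the Banach space ${\mathcal C}^{2,\a}_{\mu}(I_\s \times [v_\e,+\infty))$. All the analytic work has already been done in Lemma~\ref{Sproperties.sigma}; what remains is to combine the three estimates given there to check that $S(\varphi,\cdot)$ (for fixed admissible $\varphi$) is a well-defined self-map of $B$ which is a strict contraction.

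First I would verify that $S(\varphi,\cdot)$ sends $B$ into $B$. Pick any $g \in B$, so $\|g\|_{{\mathcal C}^{2,\a}_\mu} \leqslant 2 c_k \e^{3/2+\mu/2}$. Writing
\[
S(\varphi,g) = S(\varphi,0) + \bigl(S(\varphi,g) - S(\varphi,0)\bigr),
\]
the triangle inequality together with \eqref{estimateS0} and the Lipschitz estimate of Lemma~\ref{Sproperties.sigma} (applied with $g_1=0$, $g_2=g$) gives
\[
\|S(\varphi,g)\|_{{\mathcal C}^{2,\a}_\mu} \leqslant c_k \e^{3/2+\mu/2} + \tfrac{1}{2}\|g\|_{{\mathcal C}^{2,\a}_\mu} \leqslant c_k \e^{3/2+\mu/2} + c_k \e^{3/2+\mu/2} = 2 c_k \e^{3/2+\mu/2},
\]
so $S(\varphi,g) \in B$, provided $\e \in (0,\e_k)$.

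Second, the contraction property is exactly the second inequality in Lemma~\ref{Sproperties.sigma}: for all $g_1,g_2 \in B$,
\[
\|S(\varphi,g_2) - S(\varphi,g_1)\|_{{\mathcal C}^{2,\a}_\mu} \leqslant \tfrac{1}{2} \|g_2 - g_1\|_{{\mathcal C}^{2,\a}_\mu}.
\]
Since $B$ is a closed ball in the Banach space ${\mathcal C}^{2,\a}_\mu(I_\s \times [v_\e,+\infty))$, it is a complete metric space. The Banach fixed point theorem then yields a unique $g \in B$ with $g = S(\varphi,g)$, as claimed.

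There is essentially no obstacle at this stage: the hard work, namely the weighted Schauder type estimates on $G_{\e,v_\e}$ (Proposition~\ref{inverse}), the decay of the harmonic extension $w_\varphi$, the quadratic character of $\tilde Q_\s$ and the bound on the coefficients of $\tilde L_\e$, has already been packaged into Lemma~\ref{Sproperties.sigma}. One should only check tacitly that the radius $2c_k \e^{3/2+\mu/2}$ is compatible with the smallness hypothesis $\|g_i\|_{{\mathcal C}^{2,\a}_\mu} \leqslant 1$ implicit in \eqref{proprieta.Q.lame}, which is clear since $3/2+\mu/2 > 1/2 > 0$ for $\mu \in (-2,-1)$ and $\e$ can be taken as small as needed.
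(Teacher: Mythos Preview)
Your proof is correct and follows essentially the same approach as the paper: both invoke the contraction mapping principle on the ball $B$, using the estimates of Lemma~\ref{Sproperties.sigma} to verify that $S(\varphi,\cdot)$ is a self-map and a $\tfrac12$-contraction. Your write-up is in fact more explicit than the paper's, which simply asserts that the lemma shows $S$ is a contraction from $B$ into itself and then appeals to the fixed point theorem.
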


\noindent{\bf Proof.} The previous lemma shows that, if $\e$ is
chosen small enough, the nonlinear mapping  $S$ is a contraction
mapping  from the ball B
of radius $2 c_k \e^{3/2+\mu/2}$ in ${\mathcal C}^{2,\a}_{\mu} (I_\s 
\times [v_\e , V_\s])$ into itself. This value comes from
the estimate of the norm of $S(\varphi,0).$ Consequently thanks to the
Schauder theorem, $S$ has a unique fixed point $v$ in this ball.
\hfill \qed \\

%

\noindent
This argument provides a minimal surface which is close to  $M_1$ 
(respectively $M_2$)
 and has one boundary. This surface, 
 denoted by $S_{t,\g,\xi_1,d_t}(\varphi),$  is, close to its
boundary, a vertical graph over the annulus $B_{2r_\e} -
B_{r_\e}$ whose parametrization is given, for $\b=0$ by
\begin{equation}
\label{espansioni.KMR.bzero}
\bar U_{t,1} (r, \theta ) = -(1+\g)\ln(2r)+r\kappa_1\cos \t-
\frac{1+\g }{r}\xi_1\cos\t +d_t+\bar {\cal H}_{v_\e,\varphi}
 (\ln 2r-v_\e, \theta) + V_t(r,\theta ).
\end{equation}
and, for $\a=0,$ by
\begin{equation}
\label{espansioni.KMR.azero}
\bar U_{t,2} (r, \theta ) = -(1+\g)\ln(2r)-r\kappa_2\sin\t-
\frac{1+\g }{r} \xi_2\sin\t +d_t+\bar{\cal H}_{v_\e,\varphi} 
(\ln 2r-v_\e, \theta) + V_t(r,\theta ).
\end{equation}
where $  v_\e  = - \frac 1 2 \, \ln \e +{\mathcal O}(1).$
The boundary of the surface corresponds to $r_\e = \frac{1}{2 \sqrt \e}.$
The function $V_t$ depends non linearly on $\e,\phi.$
It satisfies $ \|V_t(\e,\phi_i)(r_\e \cdot,\cdot) \|
_{{\mathcal C}^{2,\a}  (\bar B_2- B_{1})}  
\leqslant c \e$ and 
\begin{equation}
\label{stima.contrazione.KMR}
 \|\bar V_t(\e,\phi)(r_\e \cdot,\cdot)-\bar V_t(\e,\phi')(r_\e \cdot,\cdot)
  \|_{{\mathcal C}^{2,\a} 
(\bar B_2- B_{1})} 
\leqslant c \e^{1/2} \| \phi - \phi' \|_{{\mathcal C}^{2,\a}}.
\end{equation}
This estimate follows from
$$ \|\bar V_t(\e,\phi)(r_\e \cdot,\cdot)-\bar V_t(\e,\phi')(r_\e \cdot,\cdot) \|
_{{\mathcal C}^{2,\a} 
(\bar B_2- B_{1})}\leqslant e^{\mu v_\e}
\| S(\phi,\bar V_t)-S(\phi',\bar V_t)\|_{{\mathcal C}^{2,\a}_{\mu}
 (I_\sigma \times [v_\e,+\infty) )} $$
 and the estimate  \eqref{Terza.proprieta.S}.

\section{The matching of Cauchy data}
\label{gluing}
In this section we shall complete the proof of theorems
\ref{th1}, \ref{th2} and \ref{th3}.\\

\subsection{The proof of theorem \ref{th2}}
\label{proof.theorem1}
The proof of theorem \ref{th2} is articulated in two distinct
parts: the proof of the existence of the family ${\cal K}_1$ and
of the existence of the family ${\cal K}_2.$\\

\noindent
We start proving the existence of the family ${\cal K}_2.$ 
The proof is based on an analytical gluing procedure. 
The surfaces in family ${\cal K}_2$ are symmetric
with respect to the vertical plane $\{x_2=0\}.$
So all of the surfaces involved in the following 
proof must enjoy the same mirror symmetry property.
We will show how to glue  a  compact
piece of a Costa-Hoffman-Meeks type surface with bent catenoidal 
end to two halves of the KMR example $\widetilde M_{\s,\a,0}$ along
 the upper and lower boundaries and to a horizontal periodic flat annulus 
 with a removed disk along the   middle boundary.
 All of the surfaces just mentioned have the desired property 
 of symmetry as well as the surfaces obtained 
 by them by  a slight perturbation. We will recall below the 
 necessary results we proved in previous sections. \\

\noindent
Using the result of section \ref{family.M.abt},  we can obtain 
a minimal surface that is a perturbation 
of half the KMR example $\widetilde M_{\s,\a,0}$ and is asymptotic to it. 
This surface,
denoted by $S_{t,\lambda_t,\xi_t,d_t}(\varphi_t), $
can be parameterized over the annular neighbourhood 
$B_{2r_\e} -B_{r_\e},$ 
of its boundary as the vertical graph of
(see  \eqref{espansioni.KMR.bzero})
$$\bar U_{t}(r,\t)=-
(1+\lambda_t) \ln (2r)+r\kappa_{t}\cos \t 
-\frac{(1+\lambda_t)}{r} \xi_{t}\cos \t + d_t +
\bar{\mathcal H}_{v_\e,\varphi_t} (\ln 2r-v_\e,\theta ) + 
{\cal O}_{C^{2,\a}_b}(\e).
$$
This surface will be glued to the Costa-Hoffman-Meeks example
along its upper boundary. 
The surface that will be glued along the lower boundary,
denoted by $S_{b,\lambda_b,\xi_b,d_b}(\varphi_b), $
can be parameterized in the annulus $B_{2r_\e} -B_{r_\e},$ 
as the vertical graph of
$$\bar U_b(r, \theta)=(1+\lambda_b)\ln (2r) +r\kappa_b \cos \t
-\frac{(1+\lambda_b)}{r} {\xi_b} \cos \t+d_b +\bar{\mathcal H}_{v_\e,\varphi_b} 
(\ln 2r-v_\e,\theta ) + {\mathcal O}_{C^{2,\a}_b}(\e).
$$
\noindent 
We remark that the functions $\varphi_t,\varphi_b \in C^{2,\a}(S^1)$
are even and orthogonal in the $L^2$-sense to the constant function
and to $\t \to \cos \t.$\\

\noindent
Using the result of section \ref{strip},  we can construct a
minimal graph $S_{m}(\varphi_m)$ 
close to a horizontal periodic flat annulus.
It can be parameterized, in the neighbourhood $B_{2r_\e} -B_{r_\e}$ 
of its boundary, as the vertical graph of
$$\bar U_m(r, \theta)=
\tilde H_{r_\e,\varphi_m}(r,\t) +{\mathcal O}_{C^{2,\a}_b}(\e),
$$
where $\tilde H_{r_\e,\varphi_m}$ is the harmonic extension of 
$\varphi_m \in C^{2,\a}(S^1)$ which is an even function orthogonal
in the $L^2$-sense to the constant function. \\

\noindent
Thanks to the result of   section \ref{family.costa.xi}, we can
construct a minimal surface $\bar M_{k,\e}^T(\e/2,\Psi),$ with
 $\Psi=(\psi_t,\psi_b,\psi_m),$ which is close to   a truncated 
  genus $k$ Costa-Hoffman-Meeks surface and
has three boundaries. 
The functions $\psi_t,\psi_b,\psi_m \in C^{2,\a}(S^1)$ 
are even. Moreover $\psi_t,\psi_b$ are $L^2$-orthogonal
to the constant function and to $\t \to \cos \t$
and $\psi_m$ is orthogonal to the constant function.\\

\noindent  
The surface $\bar M_{k,\e}^T(\e/2,\Psi)$ is, close to its
upper and lower boundary, a vertical graph over the annulus 
 $B_{r_\e} -B_{r_\e/2},$ 
whose parametrization is, respectively, given by
$$
  U_{t} (r, \theta ) = \sigma_{t} - \ln (2r) -\frac{\e} 2 r\cos \t +
H_{\psi_t} (s_\e-\ln 2r, \theta) + {\mathcal O}_{C^{2,\a}_b}(\e),
$$
$$
  U_b (r, \theta ) = - \sigma_{{b} } + \ln (2r) -\frac{\e} 2 r\cos \t +
H_{\psi_b} (s_\e-\ln 2r, \theta) + {\mathcal O}_{C^{2,\a}_b}(\e) ,
$$
where $  s_\e  = - \frac{1}{2}\, \ln \e .$
Nearby the middle boundary the surface  is a vertical graph 
whose parametrization is 
$$
  U_m (r, \theta ) = \tilde H_{\rho_\e,\psi_m} 
\left(\frac{1}{r} ,\theta \right) +
 {\mathcal O}_{C^{2,\a}_b}(\e) .
$$
\noindent
The functions ${\mathcal O}_{C^{2,\a}_b}(\e)$ replace the functions 
$V_t, V_b, V_m,\bar  V_t, \bar V_b, \bar V_m$ that appear at the end of sections
 \ref{strip}, 
 \ref{family.costa.xi} and \ref{family.M.abt}. They
depend nonlinearly on the different parameters and boundary data functions
but they are bounded by a constant 
 times $\e$ 
in $C^{2,\a}_b$ topology, where partial derivatives are taken with respect 
to the vector fields $r\partial_r$ and $\partial_\t.$
We assume that the parameters and the boundary functions are chosen
so that 
\begin{equation}
\label{assumption.gluing.1} 
 |\lambda_t |+|\lambda_b|+|-\lambda_t \ln\left({\e^{-1/2}}\right)
+\eta_t |+
|  \lambda_b \ln\left({\e^{-1/2}} \right)+\eta_b|+
\e^{-1/2}/2 |\kappa_t+\e/2|+ \e^{-1/2}/2|\kappa_b+\e/2|+
\end{equation}
$$2\e^{1/2} (|(1+\lambda_t)\xi_t|+|(1+\lambda_b)\xi_b| )+ 
\|\varphi_t \|_{C^{2,\a}(S^1)} +
\|\varphi_b \|_{C^{2,\a}(S^1)}+ 
\|\varphi_m\|_{C^{2,\a}(S^1)}+$$
$$+\|\psi_t \|_{C^{2,\a}(S^1)}+
\|\psi_b \|_{C^{2,\a}(S^1)}+
\|\psi_m\|_{C^{2,\a}(S^1)}\leqslant  k \e,$$
where $\eta_t=d_t- \sigma_{t},$ $\eta_b=d_b+ \sigma_{b}.$
where the constant $k> 0$ is fixed large enough. 
It remains to show that, for all $\e$ small enough, it is possible to choose
the parameters and boundary functions in such a way that the surface
$$
S_{t,\lambda_t,\xi_t,d_t}(\varphi_t)\cup S_{b,\lambda_b,\xi_b,d_b}(\varphi_b) 
\cup S_{m}(\varphi_m) \cup   M^T_{k}(\e/2,\Psi)$$
is a $C^1$ surface across the boundaries of the different summands. Regularity
theory will then ensure that this surface is in fact smooth and by construction
it has the desired properties. This will therefore complete the proof of
the existence of the family of examples denoted by ${\cal K}_1$.\\

\noindent
It is necessary to fulfill the following system of equations
$$ \left\{
\begin{array}{c}
U_t(r_\e,\cdot) = \bar U_t(r_\e,\cdot)\\
U_b(r_\e,\cdot) = \bar U_b(r_\e,\cdot)\\
U_m(r_\e,\cdot) = \bar U_m(r_\e,\cdot)\\
\partial_r U_b(r_\e,\cdot) =\partial_r \bar U_b(r_\e,\cdot)\\
\partial_r U_t(r_\e,\cdot) =\partial_r \bar U_t(r_\e,\cdot)\\
\partial_r U_m (r_\e,\cdot)=\partial_r \bar U_m(r_\e,\cdot)\\
\end{array}
\right.
$$
on $S^1.$ The first three equations lead to the system
\begin{equation}
\label{Primo.sistema.xi}
\left\{
\begin{array}{c}
 -\lambda_t \ln\left({2r_\e}\right)+\eta_t-
 \left((1+\lambda_t) \frac{\xi_t}{r_\e} \right)  \cos \t+
 r_\e(\kappa_t+\frac \e 2 )\cos \t+
\varphi_t- \psi_t= {\mathcal O}_{C^{2,\a}_b}(\e)\\
\lambda_b \ln\left({2r_\e}\right)+\eta_b-
\left( (1+\lambda_b)\frac{\xi_b}{r_\e} \right)  \cos \t+
r_\e(\kappa_b+\frac \e 2 )\cos \t+
\varphi_b-\psi_b=
{\mathcal O}_{C^{2,\a}_b}(\e)\\
\varphi_m-\psi_m=
{\mathcal O}_{C^{2,\a}_b}(\e).\\
\end{array}
\right.
\end{equation}
The last three equations give the system
\begin{equation}
\label{Secondo.sistema.xi}
\left\{
\begin{array}{c}
 -\lambda_t  +\left( (1+\lambda_t)\frac{\xi_t}{r _\e} \right)  \cos \t
 +r_\e(\kappa_t+\frac \e 2) \cos \t-
\partial_\t(\varphi_t+\psi_t)=
{\mathcal O}_{C^{1,\a}_b}(\e)\\
\lambda_b +\left( (1+\lambda_b)\frac{\xi_b}{r _\e} \right)  \cos \t
+r_\e(\kappa_b+\frac \e 2) \cos \t-
\partial_\t(\varphi_b+\psi_b)=
{\mathcal O}_{C^{1,\a}_b}(\e)\\
 \partial_\t(\varphi_m+\psi_m)=
{\mathcal O}_{C^{1,\a}_b}(\e).\\
\end{array}
\right.
\end{equation}
\noindent
To obtain this system we applied the results of lemma
\ref{proprieta.funzione.armonica.1} and
 \ref{proprieta.funzione.armonica.2}.
Here, the functions ${\mathcal O}_{C^{l,\a}}(\e)$ in the above
 expansions depend nonlinearly on the different parameters and 
 boundary data functions but they are bounded by a constant 
 times $\e$ in $C^{l,\a}$ topology.
The projection of the first two equations of each system over the 
$L^2$-orthogonal complement of ${\rm Span}\{1, \cos \t \}$
and the remaining two equations gives the system
\begin{equation}
\label{sistema.proiettato.xi}
\left\{
\begin{array}{c}
\varphi_t- \psi_t={\mathcal O}_{C^{2,\a}_b}(\e)\\
\varphi_b- \psi_b={\mathcal O}_{C^{2,\a}_b}(\e)\\
\varphi_m- \psi_m={\mathcal O}_{C^{2,\a}_b}(\e)\\
\partial_\t\varphi_t+ \partial_\t\psi_t={\mathcal O}_{C^{1,\a}_b}(\e)\\
\partial_\t\varphi_b+\partial_\t\psi_b={\mathcal O}_{C^{1,\a}_b}(\e)\\
\partial_\t\varphi_m+ \partial_\t\psi_m={\mathcal O}_{C^{1,\a}_b}(\e).\\
\end{array}
\right.
\end{equation}

\begin{lemma}
The operator $h$ defined by
$$\begin{array}{c}
C^{2,\a}(S^1) \to C^{1,\a}(S^1)\\
 \varphi \to \partial_\t \varphi
\end{array}
$$
acting on functions that are orthogonal to the constant function
in the $L^2$-sense and are even, is invertible.
\end{lemma}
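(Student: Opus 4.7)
The plan is to invert $h$ explicitly using Fourier series or, equivalently, integration. First one must identify the natural target space. Any even $\varphi\in C^{2,\alpha}(S^1)$ admits a Fourier expansion $\varphi(\theta)=\sum_{n\geqslant 0}a_n\cos(n\theta)$, and the $L^2$-orthogonality to $1$ amounts to $a_0=0$; applying $h$ then yields $\partial_\theta\varphi(\theta)=-\sum_{n\geqslant 1}n\,a_n\sin(n\theta)$, which is odd and lies in $C^{1,\alpha}(S^1)$. Hence the image of $h$ is contained in the subspace of odd $C^{1,\alpha}$-functions on $S^1$, and the invertibility has to be read as $h$ being a bijection onto this subspace.

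Injectivity is immediate: if $\partial_\theta\varphi\equiv 0$ on $S^1$ then $\varphi$ is constant, and the orthogonality to $1$ forces $\varphi\equiv 0$.

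For surjectivity, I would, given an odd $\psi\in C^{1,\alpha}(S^1)$, define $\varphi(\theta)=\int_0^\theta\psi(t)\,dt+c$, where $c\in\R$ is chosen so that $\int_0^{2\pi}\varphi\,d\theta=0$. Periodicity of $\varphi$ on $S^1$ is automatic since $\psi$, being odd, has vanishing mean. The function $\varphi$ is automatically even: the change of variables $t\mapsto -s$ together with $\psi(-s)=-\psi(s)$ gives $\int_0^{-\theta}\psi(t)\,dt=\int_0^\theta\psi(s)\,ds$. Standard Sch\"auder-type estimates (or directly identifying $a_n=b_n/n$ when $\psi=\sum_{n\geqslant 1}b_n\sin(n\theta)$) yield $\varphi\in C^{2,\alpha}(S^1)$ together with a bound $\|\varphi\|_{C^{2,\alpha}}\leqslant C\|\psi\|_{C^{1,\alpha}}$ for a universal constant $C$, which also shows continuity of the inverse.

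There is no serious obstacle in the argument; the only point requiring attention is the implicit identification of the codomain with the space of odd $C^{1,\alpha}$-functions, since without this restriction $h$ is not surjective.
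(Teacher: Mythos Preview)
Your proof is correct. Both you and the paper begin with the same Fourier observation---an even, mean-zero $\varphi$ expands as $\sum_{j\geqslant 1}\varphi_j\cos(j\theta)$ and $h(\varphi)=-\sum_{j\geqslant 1} j\varphi_j\sin(j\theta)$---but from there the arguments diverge. The paper notes that this map is obviously invertible from $H^1(S^1)$ into $L^2(S^1)$ and then invokes elliptic regularity to upgrade to H\"older spaces. You instead construct the inverse by hand as an antiderivative and check the regularity directly, which is more elementary: the fact that the primitive of a $C^{1,\alpha}$ function is $C^{2,\alpha}$ needs no regularity theory. Your explicit identification of the codomain as the odd $C^{1,\alpha}$ functions is also a useful clarification that the paper leaves implicit in the statement.
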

\noindent
{\bf Proof (\cite{FP}).} We observe that if we decompose
$\varphi =\sum_{j\geqslant 1} \varphi_j \cos( j\t),$
then
$$h(\varphi) =- \sum_{j\geqslant 1} j\varphi_j \sin( j\t),$$
 is clearly invertible from $H^1(S^1)$ into $L^2(S^1).$ 
Now elliptic regularity theory implies that this is also the case 
when this operator is defined between H\"older spaces. \hfill \qed\\

\noindent
Using this result, the system \eqref{sistema.proiettato.xi}
can be rewritten as 
\begin{equation}
\label{sistema.compatto.xi}
(\varphi_t,\varphi_b,\varphi_m,\psi_t,\psi_b, 
\psi_m) ={\mathcal O}_{C^{2,\a}} (\e).
\end{equation}
Recall that the right hand side depends nonlinearly on 
$\varphi_t,\varphi_b,\varphi_m,\psi_t,\psi_b, 
\psi_m$ and also
on the parameters $\lambda_t,\lambda_b,\eta_t,\eta_b, \xi_t,\xi_b,
\kappa_t,\kappa_b.$
We look at this equation as a fixed point problem and fix $k$ large enough. 
Thanks to estimates \eqref{stima.contrazione.KMR}, 
 \eqref{stima.contrazione.strip},\eqref{stima.contrazione.scherk},
\eqref{stima.contrazione.costa.xi.1} and \eqref{stima.contrazione.costa.xi.2}, 
we can use a fixed point theorem for contracting
mappings in the ball of radius $k\e$
in $(C^{2,\a}(S^1))^6$ to obtain, for all $\e$ small enough, a solution 
$(\varphi_t,\varphi_b,\varphi_m, \psi_t, \psi_b, \psi_m)$
of \eqref{sistema.compatto.xi}. This solution being obtained a fixed 
point for contraction mapping and the right hand side of \eqref{sistema.compatto.xi} 
being continuous with respect to all data, we
see that this fixed point $(\varphi_t,\varphi_b,\varphi_m, \psi_t, \psi_b, \psi_m)$
depends  continuously (and in fact smoothly) on the parameters $\lambda_t,\lambda_b,\eta_t,\eta_b, \xi_t,\xi_b,\kappa_t,\kappa_b.$
Inserting the founded solution  into \eqref{Primo.sistema.xi} and 
\eqref{Secondo.sistema.xi}, we see that it
remains to solve a system of the form
\begin{equation}
\label{sistema.finale.xi}
\left\{
\begin{array}{c}
 -\lambda_t \ln({2r_\e})+\eta_t+\left(- (1+\lambda_t)\frac{\xi_t}{r_\e} +
 r_\e(\kappa_t+\frac \e 2) \right)  \cos \t= 
{\mathcal O} (\e),\\
\lambda_b \ln({2r_\e})+\eta_b+\left( -(1+\lambda_b)\frac{\xi_b}{r_\e}
+r_\e(\kappa_b+\frac \e 2) 
 \right)  \cos \t=
{\mathcal O} (\e),\\
- \lambda_t  +\left( (1+\lambda_t) \frac{\xi_t}{r _\e} 
 +r_\e(\kappa_t+\frac \e 2)\right)\cos \t=
{\mathcal O} (\e),\\
 \lambda_b  +\left((1+\lambda_b) \frac{\xi_b}{r _\e} 
+r_\e(\kappa_b+\frac \e 2)\right)\cos \t=
{\mathcal O} (\e).\\
\end{array}
\right.
\end{equation}
where this time, the right hand sides only depend nonlinearly on 
$\lambda_t,\lambda_b,\eta_t,\eta_b, \xi_t,\xi_b,\kappa_t,\kappa_b.$
There are  eight equations that are obtained by projecting this system
over the constant function and the function $\t \to \cos \t.$ 
If we set 
$$(  \bar \eta_t, \bar  \eta_b) =(  -\lambda_t \ln({2r_\e})+\eta_t ,
  \lambda_b \ln({2r_\e})+\eta_b),$$
$$  (  \bar \xi_t, \bar  \xi_b) = r_\e^{-1} ((1+\lambda_t)\xi_t, 
(1+\lambda_b)\xi_b ) ,\quad
 (  \bar \kappa_t, \bar  \kappa_b) = r_\e (\kappa_t, \kappa_b ) ,$$
\noindent
the previous system can
be rewritten as
\begin{equation}
\label{sistema.coefficienti.compatto.xi}
(\lambda_t,\lambda_b,\bar\xi_t, \bar\xi_b,
\bar \eta_t, \bar \eta_b, \bar \kappa_t, \bar  \kappa_b )={\mathcal O}  (\e).
\end{equation}
This time, provided $k$ has been fixed large enough, we can use Leray-Sch\"auder
fixed point theorem in the ball of radius $k \e$ in $\R^{8}$ to solve 
\eqref{sistema.coefficienti.compatto.xi}, for all $\e$ small enough. 
This provides a set of parameters and a set 
of boundary data such that \eqref{Primo.sistema.xi} and \eqref{Secondo.sistema.xi}
hold. Equivalently we have proven the existence of a solution of systems 
\eqref{Primo.sistema.xi}  and \eqref{Secondo.sistema.xi}. So  the proof of the 
first part of theorem \ref{th2} is complete.\\

\noindent
The proof of the second part of  theorem \ref{th2} 
uses the same arguments seen above. So we will omit  most of the details.
Our aim is showing the existence of the 
 family of surfaces denoted by ${\cal K}_1.$
 The surfaces in the family ${\cal K}_1$ are symmetric 
 with respect to the vertical plane $\{x_1=0\}.$
It is important to observe that in the proof, 
the KMR example which we deal with, is obtained by 
 slight perturbation  of $\widetilde M_{\s,0,\b}.$
The symmetry properties of this surface differ 
by the ones of the surface close to $\widetilde M_{\s,\a,0}$ involved 
in the gluing procedure described above.
In particular $\widetilde M_{\s,0,\b}$ is 
symmetric with respect to the vertical plane 
$\{x_1=0\}.$ 
We recall that the Costa-Hoffman-Meeks type surface involved
in the first gluing procedure enjoys a mirror symmetry with respect
to the vertical plane $\{x_2=0\}.$ Then it is not appropriate for the gluing 
with a KMR example of the type described above.
To obtain the Costa-Hoffman-Meeks type surface we need,
it is sufficient to rotate counterclockwise by $\pi/2$ with 
respect the vertical axis $x_3$ the  Costa-Hoffman-Meeks
surface with bent catenoidal ends described in
section \ref{family.costa.xi}.
 The surface obtained in this way enjoys the mirror symmetry
 with respect to the vertical plane $\{x_1=0\}.$ 
In the parametrizations of the top and bottom ends 
 the cosinus function  is replaced by the sinus 
function, that is:

$$
  U_{t} (r, \theta ) = \sigma_{t} -\ln (2r) -\frac{\e} 2 r\sin \t +
H_{\psi_t} (s_\e-\ln (2r), \theta) + {\mathcal O}_{C^{2,\a}_b}(\e),
$$
$$
  U_b (r, \theta ) = - \sigma_{{b} } + \ln (2r) -\frac{\e} 2 r\sin \t +
H_{\psi_b} (s_\e-\ln(2r) , \theta) + {\mathcal O}_{C^{2,\a}_b}(\e) ,
$$
where $  s_\e  = - \frac{1}{2}\, \ln \e $ and 
$(r,\t)\in B_{r_\e}-B_{r_\e/2}.$
As for the planar middle end, the form of its parametrization
remains unchanged. We refer to the first part of the proof 
for its expression.
Another important remark to do, concerns
the properties of the Dirichlet boundary data $\psi_t,\psi_b,\psi_m.$
If before it was requested that they were even functions and orthogonal
(only $\psi_t,\psi_b$) to the constant function and to $\t \to \cos \t$ 
to preserve the mirror 
symmetry property with respect to the vertical plane $\{x_1=0\},$ now as 
consequence of the above observations they must be odd functions and 
$\psi_t,\psi_b$ must be orthogonal to the constant function and 
to $\t \to \sin \t.$ It is clear that all the results showed in section \ref{family.costa.xi} continue to hold. \\



\noindent
Now we give the expressions of the parametrizations of the surface $S_{t,\lambda_t,\xi_t,d_t}(\varphi_t), $
the minimal surface obtained by perturbation from  
 the KMR example
$\widetilde M_{\s,0,\b}$ and  asymptotic to it. 
This surface  can be parameterized in the neighbourhood 
$B_{2r_\e} -B_{r_\e},$ as the vertical graph of
$$\bar U_{t}=
-(1+\lambda_t) \ln (2r)+r\kappa_{t}\sin \t 
-\frac{(1+\lambda_t)}{r} \xi_{t}\sin \t + d_t +
\bar{\mathcal H}_{v_\e,\varphi_t} (\ln 2r-v_\e,\theta ) + 
{\cal O}_{C^{2,\a}_b}(\e).
$$

\noindent
The parametrization of the surface that we will glue to the 
Costa-Hoffman-Meeks type surface along its lower boundary 
 is given by 
$$\bar U_b(r, \theta)=
(1+\lambda_b)\ln (2r) 
-{\xi_b}\frac{(1+\lambda_b)}{r} \sin \t+d_b +
\bar {\mathcal H}_{v_\e,\varphi_b} (\ln 2r-v_\e,\theta ) +
{\mathcal O}_{C^{2,\a}_b}(\e),
$$
where $(r,\t)\in B_{2r_\e} -B_{r_\e}.$ \\

\noindent
To prove the theorem it is necessary to show the existence of a 
solution of the following 
system of equations
$$ 
\left\{
\begin{array}{c}
U_t(r_\e,\cdot) = \bar U_t(r_\e,\cdot)\\
U_b(r_\e,\cdot) = \bar U_b(r_\e,\cdot)\\
U_m(r_\e,\cdot) = \bar U_m(r_\e,\cdot)\\
\partial_r U_b(r_\e,\cdot) =\partial_r \bar U_b(r_\e,\cdot)\\
\partial_r U_t(r_\e,\cdot) =\partial_r \bar U_t(r_\e,\cdot)\\
\partial_r U_m(r_\e,\cdot) =\partial_r \bar U_m(r_\e,\cdot)\\
\end{array}
\right.
$$
on $S^1,$ under the  assumption \eqref{assumption.gluing.1}
for the parameters and the boundary functions.  
It is clear that the proof of the existence of a solution
of this system is based on the same arguments seen before. 
We  remark that the role played before by the functions
$\t \to \cos (i \t),$ now is played by the  functions 
$\t \to \sin (i \t).$ 
That completes the proof 
of theorem \ref{th2}.

\subsection{The proof of theorem \ref{th1}}
\label{proof.theorem.2Scherk+CHM}
To proof the theorem \ref{th1}
we will glue a compact piece of the surface $M^T_k(\xi),$ with $\xi=0,$
described in section \ref{family.costa.xi} to
two halves  of a   Scherk type surface along
the upper and lower boundary and to a  horizontal periodic flat 
annulus along the middle boundary. The construction 
 of these surfaces is showed in section \ref{strip}. 
In particular we showed the existence of a minimal graph
close to half a Scherk type example whose ends have asymptotic
directions given by $\cos \t_1 e_1 + \sin \t_1 e_3$
and  $-\cos \t_2 e_1 + \sin \t_2 e_3.$ 
These surfaces
in the neighbourhood $B_{2r_\e}-B_{r_\e}$ 
of the boundary,  admit the following parametrization 
$$\bar U_t=d_t-\ln (2r) +
 \tilde H_{r_\e,\varphi_t}(r,\t)+{\cal O}_{C^{2,\a}_b}(\e),$$ 
 $$\bar U_b=d_b+ \ln (2r) +
 \tilde H_{r_\e,\varphi_b}(r,\t)+{\cal O}_{C^{2,\a}_b}(\e),$$ 
where the Dirichlet boundary data $\varphi_i \in  C^{2,\a}(S^1),$
 for $i=t,b,$ are requested to be even and orthogonal to 
 the constant function,
$w_{\varphi_i}$ denotes their harmonic extensions.
 The other surfaces involved in the gluing procedure 
 have been described in the previous subsection.\\

\noindent 
The proof is similar to the one 
given for theorem \ref{KMR+CHM}, so we will give only the essential
details.  
Actually  to prove the theorem it is necessary to show the existence of a 
solution of the following 
system of equations
$$ \left\{
\begin{array}{c}
U_t(r_\e,\cdot) = \bar U_t(r_\e,\cdot)\\
U_b(r_\e,\cdot) = \bar U_b(r_\e,\cdot)\\
U_m(r_\e,\cdot) = \bar U_m(r_\e,\cdot)\\
\partial_r U_b(r_\e,\cdot) =\partial_r \bar U_b(r_\e,\cdot)\\
\partial_r U_t(r_\e,\cdot) =\partial_r \bar U_t(r_\e,\cdot)\\
\partial_r U_m(r_\e,\cdot) =\partial_r \bar U_m(r_\e,\cdot)\\
\end{array} \right.
$$
on $S^1,$ under a  assumption similar to \eqref{assumption.gluing.1}.
We refer to subsection \ref{proof.theorem1} for the expressions
of $U_t,U_b,U_m,\bar U_m.$ It is necessary to point out
that in this subsection we consider the more symmetric example ($\xi=0$)
in the family $(M^T_k(\xi))_\xi$. So it is necessary to replace
$\e/2$ by $0$ in 
the expressions of the functions $U_t$ and $U_b$ of the 
top and bottom ends.\\

\noindent
We want to remark  the boundary data for the surfaces 
we are going to glue together do not satisfy the same hypotheses
of orthogonality. All of these functions are orthogonal to the
constant function, but only $\psi_t,\psi_b$  are orthogonal
to $\t \to \cos \t$ too.
 The functions denoted
by ${\mathcal O}_{{\mathcal C}_b^{2,\a}}(\e)$ that appear in the 
expressions of $\bar U_i$ and $U_i,$ with $i=t,b,m,$
have a Fourier series decomposition containing
a term collinear to $\cos \t$ only if the corresponding
boundary data is assumed to be orthogonal only to the constant
function. Furthermore the fact that $\xi=0$ (the catenoidal ends
are not bent) implies that functions which parametrize the top and 
bottom end of $M_k^T(0)$ are orthogonal to $\cos \t.$ 
In other terms, in difference with the Scherk type surfaces,
 we are not able to prescribe the coefficients
in front of the eigenfunction $\cos \t$ for the catenoidal ends 
of $M_k^T(0),$  because in this more symmetric setting are 
obliged to vanish. \\

\noindent
 The first three equations lead to the system
\begin{equation} 
\label{Primo.sistema.xi3}
\left\{
\begin{array}{c}
 \eta_t + \varphi_t- \psi_t= 
 {\mathcal   O}_{C^{2,\a}_b}(\e)\\
\eta_b  +\varphi_b-\psi_b=
{\mathcal O}_{C^{2,\a}_b}(\e)\\
\varphi_m-\psi_m={\mathcal O}_{C^{2,\a}_b}(\e),\\
\end{array}
\right.
\end{equation}
where $\eta_t=d_t- \sigma_{t},$ 
$\eta_b=d_b+ \sigma_{b}.$
The last three equations give the system
\begin{equation}
\label{Secondo.sistema.xi3}
\left\{
\begin{array}{c}
\partial_\t(\varphi_t+\psi_t)={\mathcal O}_{C^{1,\a}_b}(\e)\\
\partial_\t(\varphi_b+\psi_b)={\mathcal O}_{C^{1,\a}_b}(\e)\\
 \partial_\t(\varphi_m+\psi_m)={\mathcal O}_{C^{1,\a}_b}(\e).\\
\end{array}
\right.
\end{equation}
Now to complete the proof it is sufficient to use 
the same arguments of subsection \ref{proof.theorem1}.

\subsection{The proof of theorem \ref{th3}} 
To prove this theorem it is necessary to treat separately
the case $k=0$ and $k \geqslant 1.$ \\

\noindent
{\bf The case k=0.} 
We will glue half a Scherk example with half 
a KMR example with $\a=\b=0.$  
We observe that this surface
is symmetric with respect  to the $\{x_1=0\}$ and $\{x_2=0\}$ planes.
The  Scherk example is symmetric with respect
to the $\{x_2=0\}$ plane.
 To preserve  this  property of symmetry in the surface
 obtained by the gluing procedure, we will consider  
the perturbation of $\tilde M_{\s,0,0}$ which enjoys the same
mirror symmetry. That is the surface denoted by 
$S_{t,\lambda_t,\xi_t,d_t}(\varphi)$ 
with $\lambda_t=\xi_t=0$ and $d_t=d.$
It can be parametrized in the annulus 
$B_{2r_\e}-B_{r_\e}$ as the vertical  graph  of
$$\bar U(r,\t)=
 -\ln (2r)+ \bar d 
+ \bar{\mathcal H}_{v_\e,\varphi} (\ln 2r-v_\e,\theta ) + 
{\cal O}_{C^{2,\a}_b}(\e).$$ 
The   Scherk example is parametrized as the vertical
graph of
$$ U (r,\t)=-\ln(2r) +d+\tilde H_{r_\e,\psi}(r,\t)+{\cal O}_{C^{2,\a}_b}(\e).$$
As for the Dirichlet boundary data, we assume
$\varphi$ to be an even function orthogonal to the constant 
function and to $\t \to \cos \t,$ and $\psi$ to be 
an even function orthogonal to the constant function.\\

\noindent
 To prove the theorem in the case $k=0,$ it is necessary 
 to show the existence of a solution of the following 
system of equations
$$ \left\{
\begin{array}{c}
U(r_\e,\cdot) = \bar U(r_\e,\cdot)\\
\partial_r U(r_\e,\cdot) =\partial_r \bar U(r_\e,\cdot)\\
\end{array} \right.
$$
on $S^1,$ under appropriate assumptions on the norms of the Dirichlet
boundary data and the parameters $\xi,$ $d,$ $\bar d.$
\noindent
 These equations lead to the system
\begin{equation} 
\label{Primo.sistema.KMRSCHERK}
\left\{
\begin{array}{c}
 \eta 
+  \varphi- \psi= {\mathcal O}_{C^{2,\a}_b}(\e)\\
  \partial_\t(\varphi+\psi)=
 {\mathcal O}_{C^{1,\a}_b}(\e).\\
\end{array}
\right.
\end{equation}
where $\eta= \bar d-d.$  
Now to complete the proof it is sufficient to 
use the same arguments of subsection \ref{proof.theorem1}.
The details are omitted.\\

\noindent
{\bf The case $k \geqslant 1.$}
The proof of the theorem in this case is similar 
the proof of theorem \ref{th1}. 
In fact three of the surfaces we are going to glue are 
a compact piece of the Costa-Hoffman-Meeks example
$M_k,$  half a   Scherk type example
and a horizontal periodic flat annulus 
as in the proof of theorem \ref{th1}. 
The fourth surface is half a KMR example, of the same type 
of  the proof of theorem for $k=0.$ 
The surfaces are parametrized as vertical graph
over $B_{2r_\e}-B_{r_\e}$ of the following functions:
$$\bar U_b (r,\t)=\ln(2r) +d_b +
\tilde H_{r_\e,\varphi_b}(r,\t)+{\cal O}_{C^{2,\a}_b}(\e),$$
for the  Scherk type example,
$$ \bar U_m (r,\t)=\tilde H_{r_\e,\varphi_m}(r,\t)+{\cal O}_{C^{2,\a}_b}(\e),$$
 for the horizontal periodic flat annulus,
 $$\bar U_t(r,\t)=-
 \ln (2r)+  d_t +
 \bar{\mathcal H}_{v_\e,\varphi_t} (\ln 2r-v_\e,\theta ) + 
{\cal O}_{C^{2,\a}_b}(\e),$$
for the KMR example,
 $$
  U_{t} (r, \theta ) = \sigma_{t} - \ln (2r)  +
H_{\psi_t} (s_\e-\ln (2r), \theta) + {\mathcal O}_{C^{2,\a}_b}(\e),
$$
$$
  U_b (r, \theta ) = - \sigma_{{b} } + \ln (2r) +
H_{\psi_b} (s_\e-\ln(2r) , \theta) + {\mathcal O}_{C^{2,\a}_b}(\e) ,
$$
 $$ U_m (r,\t)=\tilde H_{\rho_\e,\varphi_m}(\frac 1 r,\t)+
 {\cal O}_{C^{2,\a}_b}(\e)$$
 for the compact piece of the Costa-Hoffmam-Meeks example.
 The Dirichlet boundary data are requested to be even functions.
Functions $\psi_t,$ $\psi_b$ are orthogonal to the constant
function and to $\t \to \cos \t.$ Functions $\psi_m,$ $\varphi_t,$
$\varphi_b,$ $\varphi_m$ are orthogonal only to the constant function.
In this case the system of equations to solve is:
\begin{equation} 
\label{Primo.sistema.SCHERK+CHM+KMR}
\left\{
\begin{array}{c}
\eta_t 
+ \varphi_t- \psi_t= 
 {\mathcal O}_{C^{2,\a}_b}(\e)\\
\eta_b 
 + \varphi_b-\psi_b=
 {\mathcal O}_{C^{2,\a}_b}(\e)\\
\varphi_m-\psi_m=  {\mathcal O}_{C^{2,\a}_b}(\e)\\
\partial_\t(\varphi_t+\psi_t)
={\mathcal O}_{C^{1,\a}_b}(\e)\\
\partial_\t(\varphi_b+\psi_b)=    {\mathcal O}_{C^{1,\a}_b}(\e)\\
\partial_\t(\varphi_m+\psi_m)=    {\mathcal O}_{C^{1,\a}_b}(\e),\\
\end{array}
\right.
\end{equation}
where $\eta_t=d_t - \s_t,$ $\eta_b=d_b+\s_b.$ 
 The details are left to the reader.

\section{Appendix A}
\label{appendixa.xi}
\begin{definition}
Given $\ell \in {\mathbb N}$, $\alpha \in (0,1)$ and $\nu \in {\mathbb R}$, 
the space ${\mathcal C}^{\ell , \alpha}_{\nu} (B_{\rho_0}(0))$ is defined to be the space 
of functions in ${\mathcal C}^{\ell , \alpha}_{loc} (B_{\rho_0}(0))$ for which the 
following norm is finite

$$ \|\rho^{-\nu} \, w \|_{{\mathcal C}^{\ell , \alpha}  (B_{\rho_0}(0))}.$$
\end{definition}
\noindent
Now we can state the following result.

\begin{proposition}
\label{poisson.piano.interno.xi}
There exists an operator
$$ \tilde H : C^{2, \alpha}(S ^1) \longrightarrow
C^{2, \alpha}_{0} (S^1 \times 
 [\bar \rho,+\infty)),$$
such that for
each even function $\varphi(\t) \in  C^{2, \alpha}(S ^1 )$, which is
$L^2$-orthogonal to the constant function, 
 then $w_\varphi =  {\tilde H}_{\bar \rho,\varphi}$ solves
$$ \left\{ \begin{array}{lll}
\delta  w_\varphi  =0 & \hbox{ on }& S ^1 \times [\bar \rho,+\infty)  \\
w_\varphi =\varphi &\hbox{ on }& S ^1 \times \{\bar \rho \}.
\end{array}\right.$$
Moreover,
\begin{equation}
\label{estimate.piano.interno.xi}
 ||{\tilde H}_{\bar \rho,\varphi} ||_{C^{ 2,\alpha}_{-1}(S ^1 \times [\bar \rho,+\infty)) } 
\leqslant c  \,|| \varphi ||_{C^{2,\alpha}(S^1)},
\end{equation}
for some constant  $c>0$.
\end{proposition}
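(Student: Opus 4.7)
The plan is to solve the Dirichlet problem explicitly by separation of variables on the exterior of a disk and then derive the weighted Hölder bound by a combination of the Fourier expansion and rescaled Schauder estimates. First I would regard $S^1\times[\bar\rho,+\infty)$ as the exterior $\{|x|\geqslant\bar\rho\}\subset\R^2$ in polar coordinates $(\rho,\theta)$, so that the relevant operator is the flat planar Laplacian $\Delta=\partial_{\rho\rho}^2+\rho^{-1}\partial_\rho+\rho^{-2}\partial_{\theta\theta}^2$ (as suggested by the weight $\rho^{-\nu}$ appearing in the preceding definition).

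Next I would expand the boundary datum. Since $\varphi\in C^{2,\a}(S^1)$ is even and $L^2$-orthogonal to the constant function, its Fourier expansion reduces to $\varphi(\theta)=\sum_{k\geqslant 1}a_k\cos(k\theta)$, with $|a_k|\leqslant c\,k^{-(2+\a)}\|\varphi\|_{C^{2,\a}}$ by standard Fourier decay estimates. The bounded harmonic functions on $\{\rho\geqslant\bar\rho\}$ which decay at infinity are spanned by $(\bar\rho/\rho)^k\cos(k\theta)$ and $(\bar\rho/\rho)^k\sin(k\theta)$ for $k\geqslant 1$, together with $\log(\rho/\bar\rho)$. The hypothesis that $\varphi\perp 1$ precisely removes the logarithmic mode, and evenness kills the sine modes, so the natural candidate is
\[
\tilde H_{\bar\rho,\varphi}(\rho,\theta):=\sum_{k\geqslant 1}a_k\left(\frac{\bar\rho}{\rho}\right)^{k}\cos(k\theta),
\]
which by construction is harmonic, even in $\theta$, and equals $\varphi$ at $\rho=\bar\rho$. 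Defining $\tilde H$ on $\varphi$ this way yields a bounded linear operator; the decay estimate $|\tilde H_{\bar\rho,\varphi}(\rho,\theta)|\leqslant (\bar\rho/\rho)\sum_{k\geqslant 1}|a_k|\leqslant c\,(\bar\rho/\rho)\,\|\varphi\|_{C^{2,\a}}$ is immediate because every mode has $k\geqslant 1$; this already gives the $C^0$ part of the $C^{2,\a}_{-1}$ bound, namely $\|\rho\,\tilde H_{\bar\rho,\varphi}\|_{C^0}\leqslant c\,\|\varphi\|_{C^{2,\a}}$.

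To upgrade to the full $C^{2,\a}_{-1}$ estimate, I would apply scaling and interior/boundary Schauder estimates on dyadic annuli. On each shell $A_R=\{R\leqslant\rho\leqslant 2R\}\times S^1$ for $R\geqslant \bar\rho$, introduce the rescaled variable $\tilde\rho=\rho/R$ and set $\widetilde w(\tilde\rho,\theta)=\tilde H_{\bar\rho,\varphi}(R\tilde\rho,\theta)$; then $\widetilde w$ is harmonic on the fixed annulus $\{1\leqslant\tilde\rho\leqslant 2\}\times S^1$, and elliptic Schauder estimates on this fixed geometry give $\|\widetilde w\|_{C^{2,\a}}\leqslant c\,\|\widetilde w\|_{C^0}$. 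Undoing the rescaling produces the correctly weighted bounds on derivatives on $A_R$, and taking the supremum over $R\geqslant\bar\rho$ yields the claim. On the innermost annulus adjacent to $\rho=\bar\rho$ the corresponding boundary Schauder estimate uses the $C^{2,\a}(S^1)$ norm of $\varphi$ in place of the $C^0$ norm, which accounts for the right-hand side of \eqref{estimate.piano.interno.xi}.

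There is no serious analytic obstacle: harmonicity, boundary condition and uniqueness among decaying solutions are obvious from the Fourier construction, and the weighted estimate is purely a scaling calculation. The only points requiring care are the bookkeeping that evenness and orthogonality to $1$ force the expansion to start at $k=1$ (this is what produces the improved decay $\rho^{-1}$ rather than only boundedness), and the explicit verification that the dyadic rescaling matches the convention of the weighted space $C^{2,\a}_{-1}$ used later in the construction.
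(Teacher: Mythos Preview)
Your proposal is correct and follows essentially the same route as the paper: both write the solution explicitly as $w_\varphi(\rho,\theta)=\sum_{i\geqslant 1}\varphi_i(\bar\rho/\rho)^i\cos(i\theta)$ and use $(\bar\rho/\rho)^i\leqslant\bar\rho/\rho$ to extract the $\rho^{-1}$ decay. The only difference is that the paper stops after the pointwise bound and asserts the weighted $C^{2,\alpha}$ estimate without further comment, whereas you supply the dyadic rescaling and Schauder argument to pass from $C^0$ to $C^{2,\alpha}_{-1}$; this extra step is a genuine improvement in rigor over the paper's sketch.
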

\begin{remark}
Following the arguments of the proof below, 
it is possible to state a similar proposition but with the hypothesis 
$\varphi$  odd. 
\end{remark}
\noindent
{\bf Proof.}
We consider the decomposition of the function $\varphi$ with
respect to the basis $\{\cos(i\t) \}$, that is
$$\varphi= \sum_{i=1}^{\infty}\varphi_i \cos(i\t).$$
Then the solution $w_\varphi$ is given by
$$w_\varphi(\rho,\t) =\sum_{i=1}^{\infty}\left( \frac{\bar \rho}{\rho}\right)^{i}
\varphi _i \cos(i\t).$$
\noindent
Since $ \frac{\bar \rho}{\rho} \leqslant 1,$ then  $ \left( \frac{\bar \rho}{\rho}\right)^{i}\leqslant \left( \frac{\bar \rho}{\rho}\right),$
\noindent
we can conclude that $  |w(r,\t)|\leqslant c  \rho^{-1} |\varphi(\t)| $ and then
$||w_{\varphi}||_{C^{ 2,\alpha}_{-1}}\leqslant c  ||\varphi||_{C^{ 2,\alpha}}. $
\hfill \qed

\noindent
Now we give the statement of an useful result whose proof is contained in \cite{FP}.
\begin{proposition}
\label{poisson.catenoide.troncone.xi} 
There exists an operator
$$
 H : {\mathcal C}^{2, \a}(S ^1) \longrightarrow {\mathcal
C}^{2, \a}_{-2} ([0,+ \infty) \times S^1),$$
such that for all $\varphi \in  {\mathcal C}^{2, \a}(S ^1 )$, even function 
and orthogonal to $1$ and $\cos \t,$ in the $L^2$-sense,
the function $w = H_{\varphi}$ solves
$$
\left\{ \begin{array}{rllllll} (\partial^{2}_{s} + \partial_\theta^{2} )
\, w
& =& 0 & \mbox{ in }& [0,+ \infty)  \times S^1  \\[3mm]
w & = & \varphi & \mbox{ on }& \{0 \} \times S^1
\end{array}\right.$$
Moreover
$$\| H_{\varphi}\|_{{\mathcal C}^{ 2, \a}_{ -2} } \leqslant c  \,
\| \varphi \|_{{\mathcal C}^{2, \a}},$$
for some constant  $c>0$.
\end{proposition}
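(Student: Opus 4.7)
The plan is to solve the Dirichlet problem explicitly by Fourier series on $S^1$ and read off the decay rate from the lowest surviving Fourier mode. Since $\partial_s^2 + \partial_\theta^2$ has separated variables on $[0,+\infty) \times S^1$, the bounded harmonic extension of a boundary datum $\varphi$ admits an explicit form: write
\[
\varphi(\theta) = \sum_{n\geqslant 0} \bigl(a_n\cos(n\theta)+b_n\sin(n\theta)\bigr),
\]
and set
\[
H_\varphi(s,\theta) := \sum_{n\geqslant 0} e^{-ns}\bigl(a_n\cos(n\theta)+b_n\sin(n\theta)\bigr).
\]
The hypothesis that $\varphi$ is even forces $b_n = 0$ for all $n$, and the $L^2$-orthogonality to $1$ and to $\theta\mapsto\cos\theta$ forces $a_0 = a_1 = 0$. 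Hence the expansion starts at $n=2$:
\[
H_\varphi(s,\theta) = \sum_{n\geqslant 2} a_n\, e^{-ns}\cos(n\theta).
\]

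The second step is the weighted estimate. Since $e^{-ns} \leqslant e^{-2s}$ for $n\geqslant 2$ and $s\geqslant 0$, on each slab $[s,s+1]\times S^1$ one gets
\[
\|H_\varphi\|_{{\mathcal C}^{0}([s,s+1]\times S^1)} \leqslant e^{-2s} \sum_{n\geqslant 2}|a_n| \leqslant c\, e^{-2s}\|\varphi\|_{{\mathcal C}^{2,\alpha}(S^1)},
\]
the last inequality following from absolute convergence of the Fourier series of a ${\mathcal C}^{2,\alpha}$ function. To promote this to a ${\mathcal C}^{2,\alpha}$ bound on each slab, I would apply interior Schauder estimates for harmonic functions on the slightly larger slab $[s-\tfrac12, s+\tfrac32]\times S^1$ (shifted appropriately at $s=0$ where instead boundary Schauder estimates up to $\{s=0\}$ are used, with boundary data in ${\mathcal C}^{2,\alpha}$). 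This gives
\[
\|H_\varphi\|_{{\mathcal C}^{2,\alpha}([s,s+1]\times S^1)} \leqslant c\,\bigl(\|H_\varphi\|_{{\mathcal C}^{0}([s-\frac12,s+\frac32]\times S^1)} + \|\varphi\|_{{\mathcal C}^{2,\alpha}(S^1)}\mathbf{1}_{s=0}\bigr) \leqslant c\, e^{-2s}\|\varphi\|_{{\mathcal C}^{2,\alpha}(S^1)},
\]
which is exactly $\|H_\varphi\|_{{\mathcal C}^{2,\alpha}_{-2}} \leqslant c\|\varphi\|_{{\mathcal C}^{2,\alpha}}$ in the notation of Definition \ref{definition.space.xi} restricted to the top catenoidal end.

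The main obstacle is handling the ${\mathcal C}^{2,\alpha}$ regularity rather than just $L^2$ or ${\mathcal C}^{0}$ control: the naive bound via $\sum|a_n|$ only gives ${\mathcal C}^{0}$ decay, and term-by-term differentiation introduces factors of $n^2$ which must be absorbed. The cleanest route is to combine the exponential decay at the base ${\mathcal C}^{0}$-level with standard Schauder theory on unit slabs, as above, rather than to estimate the series directly. The boundary-regularity step at $s=0$ is the only delicate point and uses that $\varphi \in {\mathcal C}^{2,\alpha}(S^1)$; away from $s=0$ purely interior estimates suffice.
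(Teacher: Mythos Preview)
Your proposal is correct and follows essentially the same route as the paper takes for the companion results (Propositions~\ref{poisson.piano.interno.xi} and~\ref{otheroperator}): explicit Fourier expansion, identification of the lowest surviving mode from the orthogonality hypotheses, and decay read off from $e^{-ns}\leqslant e^{-2s}$ for $n\geqslant 2$. The paper itself does not prove this particular proposition but simply refers to \cite{FP}; your argument is in fact more careful than the paper's proofs of the analogous statements, since you spell out the Schauder step needed to pass from the $\mathcal{C}^0$ decay to the full $\mathcal{C}^{2,\alpha}$ weighted bound, whereas the paper's proofs of Propositions~\ref{poisson.piano.interno.xi} and~\ref{otheroperator} assert this passage without comment.
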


\begin{proposition}
\label{otheroperator}
 There exists an
operator
$$\bar {\mathcal H}_{v_0} : C^{2, \alpha}(S ^1) \longrightarrow
C^{2, \alpha}_{\mu} (S^1 \times [v_0,+ \infty)),$$ $\mu \in(-2,-1),$ such that for
every function $\varphi(v) \in  C^{2, \alpha}(S ^1 )$, which is
$L^2$-orthogonal to $e_{0,i}(u)$ with $i=0,1$ and even,
 the function $w_\varphi =\bar {\mathcal H}_{v_0}(\varphi)$ solves
$$ \left\{ \begin{array}{lll}
\partial^2_{uu} w_\varphi +\partial^{2}_{vv} w_\varphi  =0 & \hbox{ on }& S ^1 \times
[v_0,+ \infty)  \\
w_\varphi =\varphi &\hbox{ on }& S ^1 \times \{v_0\}.
\end{array}\right.$$
Moreover,
\begin{equation}
\label{estimate.xi} ||\bar {\mathcal H}_{v_0}(\varphi) ||_{C^{ 2,\alpha}_{
\mu}(S ^1 \times [v_0,+\infty]) } \leqslant c  \,|| \varphi
||_{C^{2,\alpha}(S^1)},
\end{equation}
for some constant  $c>0$.
\end{proposition}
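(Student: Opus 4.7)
The construction mirrors that of Propositions~\ref{poisson.piano.interno.xi} and~\ref{poisson.catenoide.troncone.xi}: I would build $\bar{\mathcal H}_{v_0}(\varphi)$ explicitly by separation of variables on the half-cylinder $S^1 \times [v_0, +\infty)$. Recall that at $\sigma=0$ the change of variables $u(x) = x$ is trivial and $e_{0,i}(u) = \cos(iu)$. Since $\varphi$ is assumed even and $L^2$-orthogonal to $e_{0,0} \equiv 1$ and $e_{0,1}(u) = \cos u$, it admits the Fourier expansion
\[
\varphi(u) = \sum_{i \geqslant 2} \varphi_i \cos(iu),
\]
and I would set
\[
\bar{\mathcal H}_{v_0}(\varphi)(u,v) := \sum_{i \geqslant 2} \varphi_i\, e^{-i(v - v_0)} \cos(iu).
\]
Termwise one checks $(\partial_{uu}^2 + \partial_{vv}^2)(e^{-i(v-v_0)}\cos(iu)) = (-i^2 + i^2)\,e^{-i(v-v_0)}\cos(iu) = 0$, so the formal series defines a harmonic function. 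Interior harmonicity and smoothness in $\{v > v_0\}$ are immediate from the uniform exponential decay of the partial sums on compact subsets; the regularity up to $\{v = v_0\}$ in the $C^{2,\alpha}$ topology, together with the boundary identity $w_\varphi(\cdot, v_0)=\varphi$, follows from standard Schauder boundary estimates applied on the slab $S^1 \times [v_0, v_0+1]$ with the $C^{2,\alpha}$ Dirichlet datum $\varphi$.

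For the weighted bound~\eqref{estimate.xi}, the decisive point is that the smallest frequency appearing in the expansion is $i = 2$. Thus for every $v \geqslant v_0$ one obtains
\[
\|w_\varphi\|_{C^{2,\alpha}(S^1 \times [v, v+1])} \leqslant C\,e^{-2(v - v_0)}\, \|\varphi\|_{C^{2,\alpha}(S^1)},
\]
either by estimating the tail of the series directly on $[v_0+1, +\infty)$ (where Fourier summation in $C^{2,\alpha}$ is trivial) and combining with the Schauder estimate on the boundary slab. Multiplying by $e^{-\mu v}$ and using $\mu \in (-2,-1)$, so that $2 + \mu > 0$, one finds
\[
e^{-\mu v}\,e^{-2(v - v_0)} = e^{2 v_0}\,e^{-(2+\mu) v},
\]
which is a decreasing function of $v$ on $[v_0, +\infty)$ and attains its maximum $e^{-\mu v_0}$ at $v = v_0$. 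Taking the supremum yields $\|w_\varphi\|_{C^{2,\alpha}_\mu} \leqslant c\,\|\varphi\|_{C^{2,\alpha}(S^1)}$ with $c = C e^{-\mu v_0}$, which is the claimed inequality.

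I do not expect any serious obstacle: this is a textbook Poisson-extension result for the flat Laplacian on the half-cylinder, and the orthogonality to the two lowest Fourier modes is precisely what is needed so that $w_\varphi$ decays at rate $e^{-2(v - v_0)}$, strictly faster than the weight $e^{\mu v}$ since $\mu > -2$. The only point worth a moment's care is the boundary regularity when summing the series; I would avoid trying to sum the series termwise in $C^{2,\alpha}$ up to $\{v = v_0\}$ and instead invoke classical elliptic Schauder theory on a slab, which removes any subtlety.
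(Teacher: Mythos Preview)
Your proposal is correct and follows essentially the same approach as the paper: both construct $w_\varphi$ explicitly as the Fourier series $\sum_{i\geqslant 2}\varphi_i\,e^{-i(v-v_0)}\cos(iu)$ and then exploit that the lowest surviving frequency is $i=2$ to control the weighted norm, ending with the same bound $\|w_\varphi\|_{C^{2,\alpha}_\mu}\leqslant e^{-\mu v_0}\|\varphi\|_{C^{2,\alpha}}$. Your treatment is in fact slightly more careful than the paper's, since you explicitly address boundary regularity in $C^{2,\alpha}$ via Schauder estimates on a slab rather than summing the series termwise up to the boundary.
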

\noindent {\bf Proof.}
We consider the decomposition of the function $\varphi$ with
respect to the basis $\{e_{0,i}(u) \}$, that is
$$\varphi= \sum_{i=2}^{\infty}\varphi _i e_{0,i}(u).$$
Then the solution $w_\varphi$ is given by
$$w_\varphi(u,v)=\sum_{i=2}^{\infty}e^{-i (v-v_0)}\varphi_i e_{0,i}(u).$$
We recall that $\mu \in (-2,-1)$ so we have
$-i \leqslant \mu$ from which it follows 
$|w_\varphi|_{2,\a;[v,v+1]} \leqslant e^{ \mu (v-v_0)} |\varphi
|_{2,\a}$ and
$$\| w_\varphi \|_{{\mathcal C}^{2,\a}_\mu}= \sup_{v \in [v_0, \infty]}e^{- \mu v}
| w_\varphi  |  _{2, \a ;[v,v+1] }\leqslant    \sup_{v \in [v_0, \infty]}e^{-
\mu v} e^{ \mu (v-v_0)} |\varphi |_{2,\a}\leqslant e^{-\mu
v_0}|\varphi |_{2,\a}.$$
\hfill \qed

\begin{lemma}
\label{proprieta.funzione.armonica.1}
Let $u(r,\t)$ be the harmonic extension 
defined on $[r_0,+\infty) \times S^1$ of the even funcion 
$\varphi \in {\mathcal C}^{2,\a}(S^1)$ and such that $u(r_0,\t)=\varphi(\t).$
Then  $$\partial_\t u(r,\t-\pi/2)_{|r=r_0}= -r_0  \partial_r u(r,\t)_{|r=r_0}.$$
\end{lemma}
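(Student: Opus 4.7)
The plan is to verify the identity by expanding $\varphi$ in Fourier series and computing both sides mode by mode from the explicit representation of the bounded harmonic extension. Since $\varphi \in \mathcal{C}^{2,\a}(S^1)$ is even, it admits the decomposition $\varphi(\t) = \sum_{i\ge 0} \varphi_i \cos(i\t)$; and by separation of variables (in the spirit of Proposition~\ref{poisson.piano.interno.xi}), the bounded harmonic extension on $[r_0,+\infty) \times S^1$ with $u(r_0,\t) = \varphi(\t)$ is
\[
u(r,\t) = \varphi_0 + \sum_{i \ge 1} \varphi_i \left(\frac{r_0}{r}\right)^{\!i} \cos(i\t),
\]
each individual summand being harmonic on $\R^2\setminus\{0\}$ and bounded at infinity.

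First I would differentiate this series termwise, which is legitimate up to the boundary thanks to the $\mathcal{C}^{2,\a}$ regularity of $\varphi$ (which gives summability of $\{i\,\varphi_i\}$). Evaluating at $r = r_0$ yields
\[
-r_0\, \partial_r u(r,\t)\big|_{r=r_0} = \sum_{i \ge 1} i\, \varphi_i \cos(i\t),\quad
\partial_\t u(r,\t)\big|_{r=r_0} = -\sum_{i \ge 1} i\, \varphi_i \sin(i\t).
\]
Then I would perform the angular shift $\t \mapsto \t - \pi/2$ in the second expression and match mode by mode with the first. The verification reduces to the trigonometric identity $-\sin(\t - \pi/2) = \cos\t$ applied to the principal Fourier mode~$i=1$, which encodes the Cauchy--Riemann/Dirichlet-to-Neumann correspondence between the boundary normal and tangential derivatives.

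The conceptual content is that, for a bounded harmonic function on the exterior of a disk, the rotation of the angular argument by $\pi/2$ realizes the conjugation turning the tangential boundary derivative into (minus) the rescaled normal boundary derivative for the $\cos\t$ mode. This is precisely the mode relevant in Sections~\ref{proof.theorem1} and~\ref{proof.theorem.2Scherk+CHM}, where the $\cos\t$ component of the boundary data carries the horizontal-translation and tilting parameters that are prescribed explicitly in the matching of Cauchy data, while the remaining Fourier modes are absorbed by the contraction arguments. There is no serious analytic obstacle: the only technical step is the termwise differentiation of the Poisson series up to the boundary, which is standard given the H\"older regularity of~$\varphi$.
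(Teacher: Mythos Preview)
Your approach is the same as the paper's---expand $\varphi=\sum_{i\ge0}\varphi_i\cos(i\theta)$, write the extension explicitly, and differentiate---but there are two discrepancies worth flagging. First, you take the \emph{decaying} extension $u(r,\theta)=\varphi_0+\sum_{i\ge1}\varphi_i(r_0/r)^i\cos(i\theta)$, while the paper's proof uses $u(r,\theta)=\sum_{i\ge0}\varphi_i(r/r_0)^i\cos(i\theta)$. On $[r_0,\infty)\times S^1$ both are harmonic with boundary value $\varphi$, but they give opposite signs for $r_0\,\partial_r u|_{r=r_0}$, so only one of the two can satisfy the identity even in the lowest mode; with your choice the $i=1$ mode checks out, with the paper's it does not.

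The genuine gap, however, is in your sentence ``the verification reduces to the trigonometric identity $-\sin(\theta-\pi/2)=\cos\theta$ applied to the principal Fourier mode $i=1$.'' After the shift $\theta\mapsto\theta-\pi/2$ one gets
\[
\partial_\theta u(r,\theta-\tfrac{\pi}{2})\big|_{r=r_0}=-\sum_{i\ge1} i\,\varphi_i\,\sin\!\Big(i\theta-\tfrac{i\pi}{2}\Big),
\]
and for $i=2$ this is $2\varphi_2\sin(2\theta)$, not $2\varphi_2\cos(2\theta)$. So the stated identity fails for a general even $\varphi$; it does \emph{not} reduce to the $i=1$ case. (The paper's own final line ``$\partial_\theta u(r,\theta-\pi/2)=-\sum_i i\varphi_i(r/r_0)^i\cos(i\theta)$'' commits the same slip.) Your closing remark that only the $\cos\theta$ mode is actually used in the matching of Section~\ref{gluing} is the correct diagnosis---the true relation employed there is the Dirichlet-to-Neumann identity $r_0\,\partial_r u|_{r=r_0}=\mp\sum_i i\varphi_i\cos(i\theta)$, which is then handled via the invertibility of the operator $h(\varphi)=\partial_\theta\varphi$---but that observation amends the lemma rather than proves it as stated.
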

\noindent
{\bf Proof.}
If $\varphi(\t)=\sum_{i \geqslant 0} \varphi_i \cos (i \t),$ then
the function $u$ is given by
$$u(r,\t)=\sum_{i\geqslant 0} \varphi_i 
\left( \frac r {r_0} \right)^i \cos (i \t). $$
Then 
$$\partial_r u(r,\t)= \sum_{i\geqslant 1} \varphi_i 
\left( \frac r {r_0} \right)^i \frac {i \cos (i \t)} r$$
and
$$\partial_\t u(r,\t)= -\sum_{i\geqslant 0} \varphi_i
\left( \frac r {r_0} \right)^i  {i \sin (i \t)} .$$
Consequently 
$$\partial_\t u(r,\t-\pi/2)=
 -\sum_{i\geqslant 0} \varphi_i \left( \frac r {r_0} \right)^i  {i \cos (i \t)}$$
 from which lemma follows easily.
 \hfill \qed
 
 \begin{lemma}
\label{proprieta.funzione.armonica.2}
Let $u(r,\t)$ be the harmonic extension 
defined on $[0,r_0] \times S^1$ of the even funcion 
$\varphi \in {\mathcal C}^{2,\a}(S^1)$ and such that $u(r_0,\t)=\varphi(\t).$
Then  $$\partial_\t u(r,\t-\pi/2)_{|r=r_0}= r_0 \partial_r u(r,\t)_{|r=r_0}.$$
\end{lemma}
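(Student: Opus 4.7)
My plan is to mirror the Fourier-series argument used in Lemma \ref{proprieta.funzione.armonica.1}, adapted to the bounded harmonic extension on the interior disk rather than the exterior annular region.

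First, I would expand the even boundary datum as $\varphi(\t) = \sum_{i\geqslant 0}\varphi_i \cos(i\t)$ and write down the unique bounded harmonic extension on $[0,r_0]\times S^1$ taking the value $\varphi$ on $\{r=r_0\}$:
\[
u(r,\t) = \sum_{i\geqslant 0}\varphi_i \left(\frac{r}{r_0}\right)^i \cos(i\t),
\]
each term being a harmonic polynomial (the real part of $(z/r_0)^i$ with $z = re^{i\t}$), bounded at the origin, and restricting to $\varphi$ at $r = r_0$.

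Next, I would differentiate term by term, exactly as in the proof of Lemma \ref{proprieta.funzione.armonica.1}. The radial derivative yields
\[
\partial_r u(r,\t) = \sum_{i\geqslant 1} \varphi_i \left(\frac{r}{r_0}\right)^i \frac{i\cos(i\t)}{r},
\qquad\text{so}\qquad r_0\,\partial_r u(r,\t)\big|_{r = r_0} = \sum_{i\geqslant 1} i\,\varphi_i\cos(i\t);
\]
the angular derivative at the rotated angle gives
\[
\partial_\t u(r,\t-\pi/2)\big|_{r=r_0} = -\sum_{i\geqslant 1}i\,\varphi_i\sin\!\bigl(i(\t-\pi/2)\bigr),
\]
and identification of the two expressions via the same trigonometric step already performed in the proof of Lemma \ref{proprieta.funzione.armonica.1} completes the argument.

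The only substantive change from Lemma \ref{proprieta.funzione.armonica.1} is a sign: in the exterior case the bounded harmonic extension $\sum_i\varphi_i (r_0/r)^i\cos(i\t)$ uses negative powers of $r$, so the $r$-derivative picks up an extra minus sign in the Dirichlet--Neumann identity, whereas the positive powers $(r/r_0)^i$ used here do not. This accounts for the switch from $-r_0\,\partial_r u$ in the exterior lemma to $+r_0\,\partial_r u$ in the interior version stated above. No genuine obstacle is expected, as the computation is routine and entirely parallel to the preceding lemma.
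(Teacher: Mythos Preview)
Your Fourier-series argument is exactly the paper's approach: expand $\varphi$ in cosines, write the harmonic extension explicitly, differentiate term by term, and make the trigonometric identification. The one discrepancy is the formula you take for $u$: the paper's proof of this lemma actually writes
\[
u(r,\t)=\sum_{i\geqslant 0}\varphi_i\Bigl(\tfrac{r_0}{r}\Bigr)^i\cos(i\t),
\]
i.e.\ with \emph{negative} powers of $r$, not the positive powers $(r/r_0)^i$ you use; the explicit formulas in the proofs of Lemmas~\ref{proprieta.funzione.armonica.1} and~\ref{proprieta.funzione.armonica.2} are interchanged relative to what the stated domains $[r_0,+\infty)$ and $[0,r_0]$ would naturally dictate. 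This matters for the sign: with the paper's formula one gets $r_0\,\partial_r u|_{r=r_0}=-\sum_i i\varphi_i\cos(i\t)$, which matches the paper's expression for $\partial_\t u(r_0,\t-\pi/2)$ and gives the $+r_0\,\partial_r u$ stated in the lemma; with your formula the radial side becomes $+\sum_i i\varphi_i\cos(i\t)$, so appealing to ``the same trigonometric step'' from Lemma~\ref{proprieta.funzione.armonica.1} would produce the opposite sign. In other words, your closing paragraph correctly identifies which extension is bounded on which domain, but has the roles of the two lemmas (and hence the sign conclusion) reversed relative to how the paper's proofs are actually written.
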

\noindent
{\bf Proof.}
If $\varphi(\t)=\sum_{i \geqslant 0} \varphi_i \cos (i \t),$ then
the function $u$ is given by
$$u(r,\t)=\sum_{i\geqslant 0} \varphi_i 
\left( \frac {r_0} r \right)^i \cos (i \t). $$
Then 
$$\partial_r u(r,\t)= -\sum_{i\geqslant 1} \varphi_i 
\left( \frac {r_0} r \right)^i \frac {i \cos (i \t)} r$$
and
$$\partial_\t u(r,\t)= -\sum_{i\geqslant 0} \varphi_i
\left( \frac {r_0} r \right)^i  {i \sin (i \t)} .$$
Consequently 
$$\partial_\t u(r,\t-\pi/2)=
 -\sum_{i\geqslant 0} \varphi_i \left( \frac {r_0} r \right)^i  {i \cos (i \t)}$$
 from which lemma follows easily.
 \hfill \qed

\section{Appendix B}
\label{appendixb}
\noindent{\bf Proof of proposition \ref{operator.lame}.} 
Let $Z$ be the immersion of the surface $\widetilde M_{\s,\a,\b}$
and $N$ its normal vector. We want to find the differential 
equation to which a function $f$ must satisfy such that
the surface parametrized by $Z_f=Z+fN$ is minimal.
In section \ref{subsecLame} we parametrized 
the surface $\widetilde M_{\s,\a,\b}$ on the cylinder $\esf^1 \times \R.$
We introduced the map ${\bf z}(x,y):\esf^1\times [0,\pi[ \to \bar{\mathbb C}$ 
where $x,y$ denote the sphero-conal coordinates. 
We  start working with the conformal variables $p,q$  defined to be as the 
real and the imaginary part of  ${\bf z}.$ 
It holds that
$$ |Z_p|^2=|Z_q|^2=\Lambda, \quad |N_p|^2=|N_q|^2=-K \Lambda,$$
$$ \langle N_p, N\rangle= \langle N_q ,N \rangle=0, \quad   \langle Z_p ,Z_q\rangle=0, \quad 
 \langle N_p, N_q\rangle=0,$$
$$\langle N_q ,Z_q\rangle=- \langle N_p, Z_p\rangle, \quad \langle N_q ,Z_p\rangle=\langle N_p, Z_q \rangle,$$
so $$\langle N_p ,Z_p \rangle= |N_p||Z_p| \cos \gamma_1= \sqrt{-K}\Lambda \cos \gamma_1,$$
$$ \langle N_p ,Z_q \rangle=|N_p||Z_q| \cos \gamma_2=\sqrt{-K}\Lambda \cos \gamma_2.$$
Here $K$ denotes the Gauss curvature, $Z_p,Z_q$ and $N_p,N_q$ 
denote the partial derivatives of the vectors $Z$ and $N,$
 $\gamma_1$ is the angle between the vectors $N_p$ and $Z_p$,
$\gamma_2$ is the angle between the vectors $N_p$ and $Z_q.$\\

\noindent
The proof of proposition \ref{operator.lame} is articulated in some lemmas. 
 We denote by $E_f,F_f,G_f $ the coefficiens of the 
second fundamental form for the surface parametrized by $Z_f$.  
The following lemma gives the expression of the area energy functional.
\begin{lemma}
$$ A (f) := \int \sqrt{E_fG_f - F_f^2} \,
dp\, dq, $$
with
$$E_fG_f - F_f^2=\Lambda^2 + \Lambda (f_p^2   + f_q ^2) + 2 K \Lambda^2 f^2 + 2f ( f_q^2 - f_p^2  ) \sqrt{-K} \Lambda \cos \gamma_1 $$
$$ - 4 f f_p f_q \sqrt{-K} \Lambda  \cos \gamma_2 - K \Lambda f^2 (f_p^2 + f_q^2 ) + f^4 K^2 \Lambda ^ 2.$$
\end{lemma}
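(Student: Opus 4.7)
\smallskip
\noindent\textbf{Proof proposal.} The plan is a direct computation of the first fundamental form of $Z_{f} = Z + fN$ from the stated orthogonality relations, followed by a careful cancellation using one identity that is not stated explicitly but is forced by the geometry.

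First I would expand
$(Z_{f})_{p} = Z_{p} + f_{p}\,N + f\,N_{p}$ and similarly for the $q$-derivative, and then use the listed inner-product relations. All the mixed terms involving $\langle Z_{p},N\rangle$, $\langle Z_{q},N\rangle$, $\langle N,N_{p}\rangle$, $\langle N,N_{q}\rangle$ drop out, so one gets
\[
E_{f} = \Lambda(1-Kf^{2}) + f_{p}^{2} + 2f\sqrt{-K}\,\Lambda\cos\gamma_{1},
\qquad
G_{f} = \Lambda(1-Kf^{2}) + f_{q}^{2} - 2f\sqrt{-K}\,\Lambda\cos\gamma_{1},
\]
using $\langle N_{q},Z_{q}\rangle = -\langle N_{p},Z_{p}\rangle$; and
\[
F_{f} = f_{p}f_{q} + 2f\sqrt{-K}\,\Lambda\cos\gamma_{2},
\]
using $\langle N_{q},Z_{p}\rangle = \langle N_{p},Z_{q}\rangle$ together with the vanishing of $\langle Z_{p},Z_{q}\rangle$, $\langle N_{p},N_{q}\rangle$ and $\langle Z_{p},N\rangle$, $\langle Z_{q},N\rangle$.

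Next I would compute $E_{f}G_{f}$ by writing $E_{f} = P+f_{p}^{2}+R$ and $G_{f} = P+f_{q}^{2}-R$ with $P := \Lambda(1-Kf^{2})$ and $R := 2f\sqrt{-K}\,\Lambda\cos\gamma_{1}$; this gives $E_{f}G_{f} = P^{2} + P(f_{p}^{2}+f_{q}^{2}) + f_{p}^{2}f_{q}^{2} + R(f_{q}^{2}-f_{p}^{2}) - R^{2}$. Squaring $F_{f}$ and subtracting, the term $f_{p}^{2}f_{q}^{2}$ cancels and I am left with
\[
E_{f}G_{f} - F_{f}^{2}
= P^{2} + P(f_{p}^{2}+f_{q}^{2}) + R(f_{q}^{2}-f_{p}^{2})
- 4f f_{p}f_{q}\sqrt{-K}\,\Lambda\cos\gamma_{2}
+ 4Kf^{2}\Lambda^{2}\bigl(\cos^{2}\gamma_{1}+\cos^{2}\gamma_{2}\bigr).
\]

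The only nonobvious step, and the one I expect to be the true content of the calculation, is the identity $\cos^{2}\gamma_{1}+\cos^{2}\gamma_{2}=1$. It follows because $N_{p}$ is tangential (since $\langle N,N_{p}\rangle = 0$), and $(Z_{p},Z_{q})$ is an orthogonal frame of the tangent plane with $|Z_{p}| = |Z_{q}| = \sqrt{\Lambda}$, so decomposing $N_{p}$ in this frame and equating lengths yields $|N_{p}|^{2} = -K\Lambda(\cos^{2}\gamma_{1}+\cos^{2}\gamma_{2})$; comparing with $|N_{p}|^{2} = -K\Lambda$ gives the claim. Applying this identity collapses the last bracket to $4Kf^{2}\Lambda^{2}$, and then expanding $P^{2} = \Lambda^{2} - 2Kf^{2}\Lambda^{2} + K^{2}f^{4}\Lambda^{2}$ together with $P(f_{p}^{2}+f_{q}^{2}) = \Lambda(f_{p}^{2}+f_{q}^{2}) - Kf^{2}\Lambda(f_{p}^{2}+f_{q}^{2})$ produces, after grouping, exactly the formula in the statement. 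The rest is routine bookkeeping of signs.
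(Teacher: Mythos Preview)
Your argument is correct and follows essentially the same route as the paper: direct computation of $E_f,F_f,G_f$, expansion of $E_fG_f-F_f^2$, and reduction via the identity $\cos^2\gamma_1+\cos^2\gamma_2=1$. Your grouping $E_f=P+f_p^2+R$, $G_f=P+f_q^2-R$ streamlines the algebra, and your justification of the identity via the orthogonal decomposition of $N_p$ in the frame $(Z_p,Z_q)$ is slightly cleaner than the paper's, which instead argues that $\gamma_2=\tfrac{\pi}{2}\pm\gamma_1$ from the relations $\langle N_q,Z_p\rangle=\langle N_p,Z_q\rangle$ and $\langle N_q,Z_q\rangle=-\langle N_p,Z_p\rangle$.
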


\noindent{\bf Proof.}
The coefficients of the second fundamental form are:
$$E_f= |\partial_p Z_f|^2=|Z_p|^2+f_p^2+f^2 |N_p|^2 + 2 f \langle N_p, Z_p\rangle,$$
$$ G_f=|\partial_q Z_f|^2 =|Z_q|^2+f_q^2+f^2 |N_q|^2 + 2 f \langle N_q, Z_q\rangle,$$
$$F_f=|\partial_p Z_f \cdot \partial_q Z_f| =f_p f_q + f (\langle Z_p, N_q \rangle+ 
\langle Z_q, N_p \rangle).$$
Then 
$$E_f G_f =   |Z_p|^2|Z_q|^2 +f_p^2  |Z_q|^2 + f_q^2 |Z_p|^2 +f^2 ( | N_q |^2 | Z_p |^2 + | N_p |^2 | Z_q |^2 )  +$$
$$ f^2 ( f_p^2 |N_q|^2 + f_q^2 |N_p|^2)+ f^4 |N_p|^2 |N_q|^2 + 
4 f^2 (\langle N_pZ_p \rangle)(\langle N_q Z_q \rangle)+
2f(f_p^2 \langle N_q,Z_q \rangle+f_q^2\langle N_p, Z_p \rangle)+$$ 
$$ f_p^2 f_q^2 +2f (\langle N_q ,Z_q \rangle|Z_p|^2 + \langle  N_p ,Z_p \rangle |Z_q|^2) +
 2f^3 (\langle N_q ,Z_q \rangle |Z_p|^2 +\langle  N_p, Z_p \rangle |Z_q|^2).$$
Since $\langle N_q, Z_q \rangle +\langle N_p, Z_p \rangle =0$ and 
 $|Z_p|^2=|Z_q|^2$ we can conclude that 
the last two terms of the previous expression are zero. 
Since $\langle N_q ,Z_p \rangle= \langle N_p, Z_q \rangle$ we have
$$ F_f=f_p f_q + 2f \langle N_p ,Z_q \rangle.$$
Then
$$ F_f^2 = f_p^2 f_q ^2 +  4 f^2 (\langle N_p, Z_q\rangle)^2 +
 4 f f_p f_q \langle N_p, Z_q\rangle.$$
So the expression of $E_fG_f-F_f^2$ is:
$$|Z_p|^2|Z_q|^2 +f_p^2  |Z_q|^2 + f_q^2 |Z_p|^2 +f^2 ( | N_q |^2 | Z_p |^2 + | N_p |^2 | Z_q |^2 )  +$$
$$ f^2 ( f_p^2 |N_q|^2 + f_q^2 |N_p|^2)+ f^4 |N_p|^2 |N_q|^2 + 4 f ^2 (\langle N_p,Z_p)
(\langle N_q, Z_q \rangle)+2f(f_p^2 \langle N_q, Z_q\rangle+f_q^2 \langle N_p, Z_p \rangle) $$
$$ -  4 f^2 ( \langle N_p, Z_q \rangle)^2 - 4 f f_p f_q \langle N_p, Z_q\rangle.$$
Ordering the terms we get:
$$|Z_p|^2|Z_q|^2 +f_p^2  |Z_q|^2 + f_q^2 |Z_p|^2 +f^2 ( | N_q |^2 | Z_p |^2 + | N_p |^2 | Z_q |^2 )   -  4 f^2 (\langle  N_p ,Z_q \rangle)^2$$
$$ + 4 f ^2 (\langle N_p,Z_p\rangle)(\langle N_q ,Z_q\rangle)  +2f(f_p^2 \langle N_q,Z_q\rangle+
f_q^2 \langle N_p, Z_p\rangle)  - 4 f f_p f_q \langle N_p ,Z_q\rangle+ $$
$$ +f^2 ( f_p^2 |N_q|^2 + f_q^2 |N_p|^2)+ f^4 |N_p|^2 |N_q|^2.$$
\noindent
The expression of $E_f G_f - F_f^2$ becomes:
$$ \Lambda^2 + \Lambda (f_p^2   + f_q ^2) - 2 K \Lambda^2 f^2 + 4 f^2 K \Lambda^2 \left( \cos^2 \gamma_1 + \cos^2 \gamma_2 \right)+$$
$$+ 2f ( f_q^2 - f_p^2  ) \sqrt{-K} \Lambda \cos \gamma_1  - 4 f f_p f_q \sqrt{-K} \Lambda  \cos \gamma_2 - K \Lambda f^2 (f_p^2 + f_q^2 ) + f^4 K^2 \Lambda ^ 2.$$
\noindent
Using the relations $\langle N_q, Z_p\rangle=\langle N_p,Z_q\rangle$ and 
$\langle N_q ,Z_q\rangle=- \langle N_p ,Z_p\rangle,$ it
is possible to understand that the relative positions of these
vectors are such that $\gamma_2 = \frac {\pi} 2 \pm \gamma_1   .$
So $\cos^2 \gamma_2= \cos^2 (\frac {\pi} 2 \pm \gamma_1    )= \sin^2 \gamma_1$ and $ \cos^2 \gamma_1 + \cos^2 \gamma_2= 1.$ Then
we can write:
$$ \Lambda^2 + \Lambda (f_p^2   + f_q ^2) + 2 K \Lambda^2 f^2 + 2f ( f_q^2 - f_p^2  ) \sqrt{-K} \Lambda \cos \gamma_1 $$
$$ - 4 f f_p f_q \sqrt{-K} \Lambda  \cos \gamma_2 - K \Lambda f^2 (f_p^2 + f_q^2 ) + f^4 K^2 \Lambda ^ 2.$$ \hfill \qed

\noindent
The next lemma completes the proof of the proposition \ref{operator.lame}.

\begin{lemma}
The surface whose immersion is given by $Z+fN,$
is minimal  if and only if $f$ satisfies 
$${\cal L }_\s f +Q_\s(f)=0,$$
where ${\cal L }_\s$ is the Lam\'e operator and 
 $Q_\s$ is a second order differential operator
 which satisfies
$$ \| Q_\s(f_2) - Q_\s(f_1)\|_{{\mathcal C}^{0, \alpha} (I_\s \times
[v,v+1])} \leqslant c \, \sup_{i=1,2} \| f_i\|_{{\mathcal C}^{2, \alpha}
(I_\s \times [v,v+1])} \,  \| f_2 -f_1\|_{{\mathcal C}^{2, \alpha}
(I_\s \times [v,v+1])} .$$
\end{lemma}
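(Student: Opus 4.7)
The plan is to derive the equation satisfied by $f$ as the Euler--Lagrange equation of the area functional $A(f)$ computed in the preceding lemma, then identify its linearization with the Lam\'e operator ${\cal L}_\s$ and control the remainder. Starting from the formula
$$E_fG_f - F_f^2 = \Lambda^2 + R(f,f_p,f_q),$$
where $R$ collects every term with at least one factor of $f$, $f_p$ or $f_q$, I would Taylor expand $\sqrt{E_fG_f-F_f^2}$ around $f=0$. The zeroth order contribution is the area of $\widetilde M_{\s,\a,\b}$; the first order contribution vanishes because the background surface is already minimal; and the quadratic contribution reproduces the standard Jacobi quadratic form whose Euler--Lagrange operator is the Jacobi operator ${\cal J} = \Delta_{ds^2} + |A|^2$. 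Setting the first variation of $A(f)$ equal to zero therefore yields an equation of the form
$${\cal J} f + {\cal N}(f,\nabla f, \nabla^2 f) = 0,$$
where ${\cal N}$ is a second order nonlinear operator that vanishes quadratically at $f=0$.

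Next I would convert this to the conformal variables $(u,v)$ of Section~\ref{subsecLame}. Using the identities $\Delta_{ds^2} = -K\,\Delta_{ds_0^2}$ and $|A|^2 = -2K$, one has ${\cal J} = -K(\Delta_{ds_0^2}+2)$, and the explicit computation recalled in \eqref{equationoperator} gives
$${\cal J} = \frac{-K}{k(x(u),y(v))}\,{\cal L}_\s.$$
Multiplying the minimal surface equation by the nowhere-vanishing factor $-k(x,y)/K$ puts the linear part into the form ${\cal L}_\s f$ and rescales ${\cal N}$ into a new nonlinearity $Q_\s$ of the same qualitative type: smooth, second order, vanishing at least quadratically at the origin. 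Since the overall multiplier is nonzero, the two equations are equivalent, giving the stated ``if and only if'' characterization.

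To obtain \eqref{proprieta.Q.lame} I would write $Q_\s = B_\s(f,\nabla f, \nabla^2 f)$ with $B_\s$ smooth in its arguments, $B_\s(0)=0$ and $\nabla B_\s(0)=0$, so that the standard pointwise estimate
$$|B_\s(\xi_2)-B_\s(\xi_1)|\leqslant c\,\sup_i|\xi_i|\,|\xi_2-\xi_1|$$
holds on a neighborhood of the origin. Combined with Schauder-type interpolation on the unit cube $I_\s\times[v,v+1]$, this yields the desired H\"older bound. The point I expect to require most care, and the main obstacle, is the uniformity of the constant $c$ in $\s\in(0,\pi/2)$ and in $v\in\R$: one must verify that the geometric data $\Lambda, K, \cos\gamma_1, \cos\gamma_2$ and $k(x,y)$ entering the coefficients of $B_\s$ remain bounded, together with their first two derivatives, as $\s\to 0$ and as one moves along the cylinder (which itself becomes arbitrarily long in the catenoidal limit). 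Both uniformities follow from the explicit Weierstrass data of $\widetilde M_{\s,\a,\b}$ worked out in Section~\ref{sec:KMR}, but checking them is the substantive content of the proof.
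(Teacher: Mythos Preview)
Your proposal is correct and follows essentially the same route as the paper: compute the first variation of the area functional from the preceding lemma, integrate by parts to obtain the Euler--Lagrange equation, change from the conformal $(p,q)$ variables to the $(u,v)$ variables so that the linear part becomes the Lam\'e operator, and then verify that the geometric coefficients (essentially $K$, $\sqrt{-K}$, $k$ and their first derivatives) entering the nonlinearity are uniformly bounded. One small imprecision: the conformal factor $\Lambda$ in the $(p,q)$ chart is not itself uniformly bounded, but after the change of variables and the division by $-K\Lambda/k$ it drops out of $Q_\s$, so only $K$, $\sqrt{-K}$, $k$ and $\cos\gamma_i$ (and their derivatives) need to be controlled---exactly as you anticipated.
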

\noindent{\bf Proof.} The surface parameterized by $Z_f=Z+fN$ is minimal
if and only the first variation of $A(f)$ is $0$. That is 
$$ 2 \, D A_{|f} (g) = \int  \frac{1}{\sqrt{(E_fG_f - F_f^2)}_{|f=0}} 
D_f (E_fG_f - F_f^2 ) \, (g) \,dp \, dq.$$
Thanks to the previous lemma it holds that
$$ \frac 1 {\sqrt{(E_fG_f-F_f^2)}_{|f=0}} D_f(E_fG_f-F_f^2)(g) = \frac 1 {\Lambda} \left( 2 \Lambda  ( f_p g_p + f_q g_q ) + 4 K {\Lambda}^2 fg+\right.$$
$$ +2 \sqrt {-K} \Lambda \cos \gamma_1 \left[ 2 f f_q g_q + g f_q^2-  2 f f_p g_p - g f_p^2 \right ] +$$
$$ -4 \sqrt {-K} \Lambda \cos \gamma_2 \left[  f f_q g_p +  f g_q f_p + g f_p f_q \right ] +$$
$$ \left. -2K \Lambda \left[  f g f_p^2 +  f_p g_p f^2 +f g f_q^2 + f_q g_q f^2 \right ] + 4 K^2 \Lambda ^2 f^3 g  \right ).$$
\noindent
Reordering the summands, we have:
$$ \frac 1 {\sqrt{(E_fG_f-F_f^2)}_{|f=0}} D_f(E_fG_f-F_f^2)(g) = 2 \left( f_p g_p + f_q g_q  + 2 K {\Lambda} fg+\right.$$
$$ + \sqrt {-K}\cos \gamma_1 \left[ 2 f(f_q g_q-f_p g_p)+ g (f_q^2-  f_p^2) \right ] +$$
$$ -2 \sqrt {-K} \cos \gamma_2 \left[  f( f_q g_p +   g_q f_p) + g f_p f_q \right ] +$$
$$ \left. -K  \left[  f g(f_p^2 +f_q^2) +f^2(f_p g_p +f_q g_q) \right ] + 2 K^2 \Lambda  f^3 g  \right ).$$
\noindent
In the next computation we can skip the factor 2 in front of the
last expression.

$$ f_{p}g_p  + f_{q}g_q  + 2 K \Lambda fg + Q_1(f,f_p,f_q)g - Q_2(f,f_p,f_q)g_p - Q_3(f,f_p,f_q) g_q =0,$$
where $$ Q_1(f,f_p,f_q)= -(f_p^2-  f_q^2)\sqrt {-K}  \cos \gamma_1
-  2 f_p f_q  \sqrt {-K} \cos \gamma_2 - K f (f_p^2 +f_q^2) + 2
K^2 \Lambda  f^3,$$
$$ Q_2(f,f_p,f_q)= 2ff_p \sqrt {-K} \cos \gamma_1 +  2 f f_q  \sqrt {-K} \cos \gamma_2 + K f^2 f_p,$$
$$ Q_3(f,f_p,f_q)= -2ff_q \sqrt {-K} \cos \gamma_1 +  2 f f_p  \sqrt {-K} \cos \gamma_2 + K f^2 f_q.$$
\noindent
An integration by parts and a change of sign give us the equation:
$$\left( f_{pp}  + f_{qq}  - 2 K \Lambda f - Q_1(f,f_p,f_q) +\right.$$
$$\left. + P_2(f,f_p,f_q,f_{pp}, f_{pq},f_{qq}) + P_3(f,f_p,f_q,f_{pp}, f_{pq},f_{qq})\right) g =0,$$
where  $$P_2(f,f_p,f_q,f_{pp}, f_{pq},f_{qq})=\partial_p
Q_2(f,f_p,f_q) $$ and $$P_3(f,f_p,f_q,f_{pp}, f_{pq},f_{qq})=\partial_q  Q_3(f,f_p,f_q).$$

\noindent 
That is  $$P_2(f,f_p,f_q,f_{pp}, f_{pq},f_{qq})=2(f_p^2 + f
f_{pp}) \sqrt {-K} \cos \gamma_1+ 2( f_pf_q + f f_{pq} ) \sqrt
{-K} \cos \gamma_2+$$
$$+K(2f f_p^2+f^2f_{pp})+2f(f_p (\sqrt {-K} \cos \gamma_1)_p +f_q (\sqrt {-K} \cos \gamma_2)_p)+f^2 f_p K_p $$
and
$$P_3(f,f_p,f_q,f_{pp}, f_{pq},f_{qq})=-2(f_q^2 + f f_{qq}) \sqrt {-K} \cos \gamma_1+ 2( f_pf_q + f f_{pq} ) \sqrt {-K} \cos \gamma_2+$$
$$+K(2f f_q^2+f^2f_{qq})+2f(-f_q (\sqrt {-K} \cos \gamma_1)_q +f_p (\sqrt {-K} \cos \gamma_2)_q)+f^2 f_q K_q.$$

\noindent Now we change the variables, passing from the 
$(p,q)$ variables to the  $(u,v)$ variables. Then we want 
to understand how above differential equation 
changes.
 We recall that $p$ and $q$ 
are the real and imaginary part of the variable ${\bf z}$
of which we know the expression in terms of the
spheroconal coordinates $x,y$ (see \eqref{espressione.z}).
It is known that the metric $\bar g$ induced on a surface whose immersion $Z$ 
is given by the
Weierstrass representation on a domain of the complex ${\bf z}$-plane, can be
expressed in terms of the metric $d\bar s^2=dp^2+dq^2,$
by $\bar g=\Lambda (dp^2+dq^2),$ where $\Lambda=|Z_p|^2=|Z_q|^2.$ 
The Laplace-Beltrami operators
written with respect to the metrics $d \bar s^2$ and $\bar g$
are related by: 
$$\Delta_{d \bar s^2}=\frac 1 \Lambda \Delta_{\bar g}.$$  
That is they differ by the conformal factor $1/\Lambda.$
In section \ref{operator.lame} we observed that 
 the  conformal factor related to the change of coordinates 
 $(x,y) \to (u,v)$ is $-K/k.$
So the conformal factor due to 
the change of coordinates $(p,q) \to (u,v)$ is obtained by multiplication
of the conformal factors described above.   
Summarizing it holds that $$ f_{pp}+f_{qq}=\frac{-K \Lambda}{k}( f_{uu}+f_{vv}).$$
So we can write
$$\frac{-K \Lambda}{k} \left(f_{uu}+f_{vv}\right) + 2 \left( -K \Lambda \right) f +
R_1+R_2+R_3=0,$$ where  
$$ R_1(f,f_u,f_v)=  -\frac{-K \Lambda}{k} \left [ -( f_u^2-f_v^2) \sqrt{-K} \cos \gamma_1 - 2 f_u f_v \sqrt{-K} \cos \gamma_2 - Kf ( f_u^2+f_v^2) \right]  - 2 K^2 \Lambda f^3 $$
$$=\frac{-K \Lambda}{k} \left[ ( f_u^2-f_v^2) \sqrt{-K} \cos \gamma_1  +2 f_u f_v \sqrt{-K} \cos \gamma_2 + Kf ( f_u^2+f_v^2)
-2 K k f^3\right ] =$$
$$ \frac{-K \Lambda}{k}\bar P_1(f,f_u,f_v),$$
$$ R_2(f,f_u,f_v, f_{uu}, f_{uv},f_{vv}) = \frac{-K \Lambda}{k} P_2(f,f_u,f_v, f_{uu}, f_{uv},f_{vv})$$
and
$$ R_3(f,f_u,f_v, f_{uu}, f_{uv},f_{vv}) = \frac{-K \Lambda}{k} P_3(f,f_u,f_v, f_{uu}, f_{uv},f_{vv}).$$
We can write, simplifying the notation:
 $$ \frac{-K \Lambda}{k} \left[ f_{uu}+f_{vv} + 2
k(u,v) f + \bar P_1(f)+P_2(f)+P_3(f)  \right]=0.$$ We can recognize the
Lam\'e operator, $${\cal L}_\s f =  f_{uu}+f_{vv} + 2(\sin^2 \s
\cos^2 x(u) + \cos^2 \s \sin^2 y(v)) f, $$ then, if we set 
$Q_\s=\bar P_1(f)  +P_2(f)+ P_3(f),$  the equation can
be written 
 $${\cal L }_\s f +Q_\s(f)=0.$$
To show the estimate about $Q_\s$ is sufficient to show that 
all its coefficients are bounded.
In particular we will show that the Gauss curvature $K$ and its derivatives
$K_u, K_v$ are bounded.  
We start observing that 
$\frac{-K}{k(x(u),y(v))}$ is bounded.
It is well known that the Gauss curvature 
has the following expression in terms of the Weierstrass data $g,dh$: 
$$K= -16 \left(|g|+\frac{1}{|g|} \right)^{-4} \left| \frac{dg}{g}\right|^2
 \left|dh\right|^{-2}$$
We recall that $dh=\frac{\mu\,dz}{\sqrt{(z^2+\l^2)(z^2+\l^{-2})}}.$ 
Since $|z^2+\lambda^2 | |z^2+\lambda^{-2}|$ and 
$k(x,y)= \sin^2 \sigma \, \cos^2 x(u) + \cos^2 \sigma \, \sin^2 y(v )$ 
have the same zeroes,
that is the points $D,D',D'',D'''$ given by 
\eqref{eqbranchings1},  then $-K/k$ is bounded as well as 
its derivatives.\\
 
\noindent
We can give an estimate of the derivatives of $K$ and $\sqrt{-K}.$
We can write $\sqrt{-K}= \sqrt k\sqrt { \frac {-K} k}.$ 
From the observations made above it follows that it is sufficient  
to study the derivatives of $\sqrt k$ to show 
that the derivatives of $\sqrt{-K}$ are bounded.\\

\noindent
We recall that
$$l(x)=\sqrt{1-\sin^2\s \sin^2x} \quad m(y)=\sqrt{1-\cos^2 \s \cos^2y}.$$
From the expression of $k,$ using (\ref{change.coordinates}) it is easy to get:
$$\frac {\partial} {\partial u} \sqrt k=-\frac{\sin^2 \s \sin 2x(u)} 
{2\sqrt k} l(x(u)),$$
$$\frac {\partial} {\partial v} \sqrt k=\frac{\cos^2 \s \sin 2y(v)}
 {2\sqrt k} m(y(v)).$$
Then $$ \left|  \frac {\partial} {\partial u} \sqrt k   \right|= 
\frac {\sin^2 \s |\sin 2x(u)| l(x(u))} {2\sqrt{\sin^2 \s \cos^2 x(u)
+ \cos^2 \s \sin^2 y(v)} } \leqslant \frac{\sin^2 \s |\sin
2x(u)|}{2\sin \s |\cos x(u)|}\leqslant  \sin \s,$$

$$ \left|  \frac {\partial} {\partial v} \sqrt k \right|= 
\frac {\cos^2 \s |\sin 2y(v)| m(y(v))}{2\sqrt{\sin^2 \s \cos^2 x(u) +
\cos^2 \s \sin^2 y(v)} } \leqslant \frac{\cos^2 \s |\sin 2y(v)|}{2\cos
\s |\sin y(v)|}\leqslant \cos \s.$$ 
That is the derivatives of $\sqrt k$ (and consequently the 
ones of $\sqrt{-K}$)  are bounded. That completes the proof.\\
\hfill \qed\\

\section{Appendix C}
\label{appendixc}
The differential equation 
\begin{equation}
\label{caso.l=1}
\sin y\, \partial_{y}\left(\sin y \,\partial_{y} f\right)
-j^2 f +2 \sin^2 y \,f =0
\end{equation}
 is a particular case ($l=1$) of the associated 
Legendre differential equation, that  is given by
$$\sin y\, \partial_{y}\left(\sin y \,\partial_{y} f\right)
-j^2 f +l(l+1) \sin^2 y \,f =0,$$
where $l,j \in \N.$
The family of the solutions of equation \eqref{caso.l=1} (see \cite{AS})
is $$c_1 P_l^j (\cos y)+c_2 Q_l^j(\cos y),$$ for $l=1,$
where  $P_l^j(t)$ and $Q_l^j(t)$ are respectively the associated
 Legendre functions of first and second kind.
 If $l=1$ these functions are defined as follows:

$$P_1^j(t)=\left\{
\begin{array}{cc}
t & \hbox{if} \quad j = 0\\
-\sqrt{1-t^2} & \hbox{if} \quad j = 1\\
0, & \hbox{if} \quad j \geqslant 2,
\end{array}
\right.
$$

$$Q_1^j(t)=(-1)^j \sqrt{(1-t^2)^j} \, \frac{d^jQ_1^0(t)}{dt^j}\quad
\mbox{ and }\ Q_1^0(t)=\frac{t}{2}\ln
\left(\frac{1+t}{1-t}\right)-1 .$$

\end{document}